\numberwithin{equation}{section}
\tikzstyle{hvector}=[inner sep=2pt,draw=blue!50,fill=blue!10,thick]
\tikzstyle{unit}=[inner sep=2pt,shape=circle, draw]
\tikzstyle{counit}=[inner sep=2pt,shape=circle, draw,fill=gray]
\tikzstyle{antipode}=[inner sep=2pt,shape=rectangle, draw]
\tikzstyle{fantipode}=[inner sep=2pt,shape=rectangle, draw,fill,white]
\tikzstyle{cocycle}=[inner sep=2pt,shape=circle, draw]
\tikzstyle{twistedm}=[inner sep=2pt,shape=circle, fill=gray]
\tikzstyle{autom}=[inner sep=2pt,shape=circle, draw]
\tikzstyle{coact}=[inner sep=2pt,shape=circle, fill=black]
\tikzstyle{wrec}=[inner sep=2pt,shape=rectangle, minimum width=6pt, minimum height=4pt]
\newcommand{\bell}[3]
{(#1) ellipse (#2 and #3)}
\tikzset{
  on each segment/.style={
    decorate,
    decoration={
      show path construction,
      moveto code={},
      lineto code={
        \path [#1]
        (\tikzinputsegmentfirst) -- (\tikzinputsegmentlast);
      },
      curveto code={
        \path [#1] (\tikzinputsegmentfirst)
        .. controls
        (\tikzinputsegmentsupporta) and (\tikzinputsegmentsupportb)
        ..
        (\tikzinputsegmentlast);
      },
      closepath code={
        \path [#1]
        (\tikzinputsegmentfirst) -- (\tikzinputsegmentlast);
      },
    },
    },
 mid arrow/.style={
    postaction={decorate},
    decoration={
      markings,
      mark=at position 0.5 with {
        \arrow*[xshift=0pt,  thick,scale=1]{stealth}
      }}},
       end arrow/.style={
    postaction={decorate},
    decoration={
      markings,
      mark=at position 0.95 with {
        \arrow*[xshift=0pt,  thick,scale=1]{stealth}
      }}},
  mid end arrow/.style={
    postaction={decorate},
    decoration={
      markings,
      mark=at position 0.7 with {
        \arrow*[xshift=0pt,  thick,scale=1]{stealth}
      }}},
  start arrow/.style={
    postaction={decorate},
    decoration={
      markings,
      mark=at position 0.3 with {
        \arrow*[xshift=0pt,  thick,scale=1]{stealth}
      }}}
      }
\newcommand{\B}{\mathbb{B}}
\newcommand{\Z}{\mathbb{Z}}
\newcommand{\lcf}{\mathbb{F}}
\newcommand{\qbinom}[3]{\genfrac{[}{]}{0pt}{}{#1}{#2}_{#3}}
\newcommand{\gauss}[1]{\gamma_{#1}}
\newtheorem{theorem}{Theorem}
\newtheorem{proposition}{Proposition}
\newtheorem{lemma}{Lemma}
\theoremstyle{remark}
\newtheorem{remark}{Remark}
\newtheorem{conjecture}{Conjecture}
\def\be{\begin{equation}}
\def\ee{\end{equation}}
\newcommand{\antipode}{S}
\newcommand{\fP}{\mathcal{P}_{\mathrm{fin}}}
\title{On braided Hopf structures on exterior algebras}
\author{Rinat Kashaev}
\address{Section de math\'ematiques, Universit\'e de Gen\`eve,
rue du Conseil-G\'en\'eral 7-9, 1205 Gen\`eve, Suisse\\}
\email{Rinat.Kashaev@unige.ch}
\author{Vladimir Mangazeev}
\address{Department of Fundamental \& Theoretical Physics, Research School of Physics,
 Australian National University, Canberra, ACT 2601, Australia\\}
\email{Vladimir.Mangazeev@anu.edu.au}
\date{June 20, 2025}
\begin{document}

\begin{abstract}
We show that the exterior algebra of a vector space
$V$ of dimension greater than one admits a one-parameter family of braided Hopf algebra structures, arising from its identification with a Nichols algebra. We explicitly compute the structure constants with respect to a natural set-theoretic basis.

A one-parameter family of diagonal automorphisms exists, which we use to construct solutions to the (constant) Yang--Baxter equation. These solutions are conjectured to give rise to  the two-variable Links–Gould polynomial invariants associated with the super-quantum group
 \( U_q(\mathfrak{gl}(N|1)) \), where \( N = \dim(V) \). We support this conjecture through computations for small values of
$N$.
\end{abstract}
\maketitle

\section{Introduction}
Let $(V,\tau)$ be a braided $\lcf$-vector space. This means that $\tau\in\operatorname{Aut}(V\otimes V)$ satisfies the Yang--Baxter equation
\begin{equation}\label{eq:ybe}
\tau'\tau''\tau'=\tau''\tau'\tau'',\quad \tau':=\tau\otimes\operatorname{id}_V,\  \tau'':=\operatorname{id}_V\otimes\tau.
\end{equation}
In this case, the tensor algebra $T(V)=\bigoplus_{n=0}^\infty V^{\otimes n}$ is canonically a braided Hopf algebra with an invertible antipode and where all the elements of $V$ are primitive. The Nichols algebra associated with $(V,\tau)$ is the quotient braided Hopf algebra $\mathfrak{B}(V)=T(V)/\mathfrak{J}_V$ where $\mathfrak{J}_V$ is the maximal Hopf ideal intersecting trivially with $V$, see, for example \cite{MR4164719}. It is a two-sided ideal generated by primitive elements of degree two and higher, where elements of degree $n$ are those of $T^n(V):=V^{\otimes n}$.

A well known example of Nichols algebra is the exterior algebra $\bigwedge \!\!V$ corresponding to the diagonal braiding
\be\label{superbr}
\tau(u\otimes v)=-v\otimes u,\quad \forall u,v\in V.
\ee
The algebra structure is given by the presentation
\be
\mathfrak{B}(V)\simeq\bigwedge \!\!V=\lcf\langle V\mid x^2=0,\ \forall x\in V\rangle,
\ee
see~\cite[Example~1.10.15]{MR4164719}. A nice property of the exterior algebra is that its dimension is $2^{\operatorname{dim}(V)}$ which is finite when the dimension of $V$ is finite.

In this work, we show that the exterior algebra of any vector space $V$ with $\dim(V)\ge2$ admits a one-parameter family of deformed braided Hopf algebra structures, denoted $\Lambda_p(V)$, whose braiding is induced by suitably normalised Hecke-type quantum $R$-matrices. The underlying algebra of $\Lambda_p(V)$ remains undeformed. Motivated by Kuperberg's spiders~\cite {MR1403861}, the Murakami--Ohtsuki--Yamada (MOY) diagrammatic calculus~\cite{MR1659228},  and the work  of Cautis--Kamnitzer--Morrison~\cite{MR3263166}, we analyse the $U_q(\mathfrak{gl}_2)$-module structure on $\Lambda_p(V)^{\otimes 2}$ (with $p=q^2$) and derive a streamlined formula for the braiding on $\Lambda_p(V)$, which corresponds to the formulas in~\cite[Theorem 5.1]{MR1659228} and in~\cite[Corollary 6.2.3]{MR3263166}, and calculate explicitly the matrix coefficients in the  set-theoretic basis.

Following the approach of~\cite{GaroufalidisKashaev2023}, we construct  $R$-matrices associated with diagonal automorphisms and the corresponding right generalised Yetter--Drinfel'd module structures on $\Lambda_p(V)$. Using the MOY calculus, we obtain a compact expression for these $R$-matrices and an explicit formula for their matrix coefficients in the set-theoretic basis. For an $N$-dimensional input vector space, the knot invariant produced by the resulting $R$-matrix is conjectured to coincide with the  two-variable Links--Gould polynomial arising from the super quantum group $U_q(\mathfrak{gl}(N|1))$~\cite{MR1199742,MR1223526}.

 \subsection*{Outline} In Section~\ref{sec:0}, we recall the definition of braided Hopf objects and braided Hopf algebras, and introduce the string-diagrammatic notation.
 In Section~\ref{sec:1}, we prove Theorems~\ref{thm:TV/J2}, \ref{thm:1}, and \ref{thm:Lambda-p-Nichols}.
  In Section~\ref{sec:5}, using the MOY calculus, we derive a simplified formula for the braiding of $\Lambda_p(V)$. In Section~\ref{sec:2}, we further analyse the structure of the braiding of  $\Lambda_p(V)$, and we obtain an explicit expression for its matrix coefficients.
 In Section~\ref{sec:3}, after introducing diagonal automorphisms of $\Lambda_p(V)$, we apply the MOY calculus to prove Theorem~\ref{thm:big-R-matrix}. We then work out the explicit form of the resulting $R$-matrices for a few small values of $\dim(V)$.

 \subsection*{Acknowledgements}
 The authors would like to thank Vladimir Bazhanov, Jehanne Dousse, Stavros Garoufalidis, Thomas Kerler, Thang L\^e, Gregor Masbaum, Roland van der Veen, Alexis Virelizier,  and  Emmanuel Wagner for fruitful discussions.
Some of these results were presented at the ``Hot Topics'' conference on Quantum Topology in Bonn, May 12--16, 2025. The authors thank the participants for stimulating discussions, especially Francis~Bonahon and L\`eo~Schelstraete for pointing out previous work on related topics. Special thanks go to Nicol\'as Andruskiewitsch and Jo\~ao Matheus~Jury Giraldi for bringing their work~\cite{MR3775317} to our attention, which overlaps with Theorems~\ref{thm:TV/J2} and \ref{thm:Lambda-p-Nichols} when $\dim(V)=2$.

VM gratefully acknowledges the hospitality of the University of Geneva during February--March 2025, when part of this work was completed.

This work is partially supported by the SNSF research program NCCR The Mathematics of Physics (SwissMAP), the ERC Synergy Grant Recursive and Exact New Quantum Theory (ReNew Quantum), and the SNSF grants no. 200021-232258 and no. 200020-200400.

\section{Braided Hopf algebras and a diagrammatic language}\label{sec:0}

In this section, we briefly review the definition of a braided Hopf algebra, first introduced by Shahn Majid in~\cite{MR1289422}. We begin with the abstract notion of a braided Hopf object in a braided monoidal category, and then specialise to the context of vector spaces.

Let $\mathcal{C}=(\mathcal{C},\otimes, I, \beta)$ be a (strict) braided monoidal category with tensor product $\otimes$, unit object $I$, and braiding  $\beta:\otimes\to\otimes^{\mbox{\tiny op}}$, see \cite{MR3674995}  for definitions.

A \emph{\color{blue} (braided) Hopf object} in $\mathcal{C}$  is an object $H$ endowed with five structural morphisms:
\be
\label{eq:hopfmaps}
\nabla\colon H\otimes H\to H,\quad \eta\colon I\to H,\quad
\Delta\colon H\to H\otimes H,\quad \epsilon\colon H\to I,
\quad S\colon H\to H
\ee
called, respectively, the product, unit, coproduct, counit and antipode, that satisfy the following relations or axioms
\begin{subequations}
\begin{align}
\label{Hr1}
\text{associativity :} & \hspace{0.2cm}
\nabla(\nabla \otimes \operatorname{id}) = \nabla(\operatorname{id} \otimes \nabla)
\\
\label{Hr2}
\text{coassociativity :} & \hspace{0.2cm}
(\Delta \otimes \operatorname{id})\Delta = (\operatorname{id} \otimes \Delta)\Delta
\\
\label{Hr3}
\text{unitality :} & \hspace{0.2cm}
\nabla(\eta \otimes \operatorname{id}) = \operatorname{id} =
\nabla(\operatorname{id} \otimes \eta)
\\
\label{Hr4}
\text{counitality :} & \hspace{0.2cm}
(\epsilon \otimes \operatorname{id})\Delta = \operatorname{id} =
(\operatorname{id} \otimes \epsilon)\Delta
\\
\label{Hr5}
\text{invertibility :} & \hspace{0.2cm}
\nabla(\operatorname{id} \otimes S) \Delta = \eta \epsilon =
\nabla(S \otimes \operatorname{id}) \Delta\\
\label{Hr7}
\text{compatibility :} & \hspace{0.2cm}
(\nabla \otimes \nabla)(\operatorname{id} \otimes \beta_{H,H} \otimes
\operatorname{id})(\Delta \otimes \Delta) = \Delta \nabla
\end{align}
\end{subequations}
where we omit the composition symbol.

In our case, the braided monoidal category
$\mathcal{C}$ is a subcategory of the category
$\mathbf{Vect}_\lcf$ of vector spaces over a field $\lcf$, equipped with the monoidal
structure given by the tensor product $\otimes_\lcf$ and the unit object $I=\lcf$. In this setting, the objects and morphisms of $\mathcal{C}$
are  called \emph{\color{blue} braided vector spaces} and  \emph{\color{blue} (braided linear) maps}, respectively,
whereas a Hopf object is referred to as
a \emph{\color{blue} braided Hopf algebra}, see the review paper~\cite{Takeuchi}.

Following \cite[Chapter 2]{MR3674995}, we introduce the following graphical notation of string diagrams for the structural morphisms
of a braided Hopf algebra $H$ (all lines correspond to $H$)
\begin{equation}
\nabla\ =\
\begin{tikzpicture}[scale=1.6,baseline=-2]
\draw[thick]  (0,0)--(0,10pt)(0,0) to [out=-135,in=90] (-5pt,-10pt) (0,0)
  to [out=-45,in=90] (5pt,-10pt);
\end{tikzpicture}
\quad (\text{product}),\qquad
\Delta\ =\
\begin{tikzpicture}[scale=1.65,baseline=-2]
\draw[thick]  (0,0)--(0,-10pt) (0,0) to [out=135,in=-90] (-5pt,10pt) (0,0)
  to [out=45,in=-90] (5pt,10pt);
\end{tikzpicture}
\quad (\text{coproduct}),\qquad
\beta_{H,H}=
\begin{tikzpicture}[baseline=13,yscale=1.1]
\draw[thick] (0,1) to [out=-90,in=90] (1,0);
\draw[line width=3pt,white] (1,1) to [out=-90,in=90] (0,0);
\draw[thick] (1,1) to [out=-90,in=90] (0,0);
\end{tikzpicture}\quad (\text{braiding})
\end{equation}
\begin{equation}
\eta\ =\
\begin{tikzpicture}[scale=1.5,baseline=2]
\node (x) [unit]{};
\draw[thick]  (x)--+(0,10pt);
\end{tikzpicture}
\quad (\text{unit}),\qquad
\epsilon\ =\
\begin{tikzpicture}[scale=1.5,baseline=-7]
\node (x)[counit] {};
\draw[thick]  (x)--+(0,-10pt);
\end{tikzpicture}
\quad (\text{counit}),\qquad
\antipode\ =\
\begin{tikzpicture}[scale=1.5,baseline=-3]
\node (x)[antipode]{};
\draw[thick]  (x)--+(0,10pt)(x)--+(0,-10pt);
\end{tikzpicture}
\quad (\text{antipode}).
\end{equation}

The relations or axioms of a braided Hopf algebra take the following graphical form:
\begin{equation}
\begin{tikzpicture}[scale=2,baseline=10pt]
\draw[thick] (0,0) to [out=90,in=-135](5pt,5pt)(10pt,0) to [out=90,in=-45](5pt,5pt)
(5pt,5pt) to [out=90,in=-135](10pt,10pt)(20pt,0) to [out=90,in=-45](10pt,10pt)
(10pt,10pt)--(10pt,15pt);
\end{tikzpicture}
\ =\
\begin{tikzpicture}[scale=2,baseline=10pt]
\draw[thick] (0,0) to [out=90,in=-135] (10pt,10pt) (10pt,0)
to [out=90,in=-135](15pt,5pt)
(15pt,5pt) to [out=90,in=-45] (10pt,10pt) (20pt,0) to [out=90,in=-45] (15pt,5pt)
(10pt,10pt)--(10pt,15pt);
\end{tikzpicture} \quad(\text{associativity}),
\qquad
\begin{tikzpicture}[scale=2,baseline=6pt]
\node (x) [unit]{};
\draw[thick] (x) to [out=90,in=-135] +(5pt,5pt);
\draw[thick] (x)+(10pt,0) to [out=90,in=-45] ++(5pt,5pt)--+(0,5pt);
\end{tikzpicture}
\ =\
\begin{tikzpicture}[scale=2,baseline=6pt]
\draw[thick] (0,0)--(0,10pt);
\end{tikzpicture}
\ =\
\begin{tikzpicture}[scale=2,baseline=6pt]
\node (x) [unit]{};
\draw[thick] (x) to [out=90,in=-45] +(-5pt,5pt);
\draw[thick] (x)+(-10pt,0) to [out=90,in=-135] ++(-5pt,5pt)--+(0,5pt);
\end{tikzpicture}
\quad (\text{unitality}),
\end{equation}

\begin{equation}
\begin{tikzpicture}[scale=2,baseline=10pt]
\draw[thick] (0,0)--(0,5pt) to [out=135,in=-90] (-5pt,10pt)
to [out=135,in=-90] (-10pt,15pt)
(-5pt,10pt) to [out=45,in=-90] (0,15pt) (0,5pt) to [out=45,in=-90] (10pt,15pt);
\end{tikzpicture}
\ =\
\begin{tikzpicture}[scale=2,baseline=10pt]
\draw[thick] (0,0)--(0,5pt) to [out=45,in=-90] (5pt,10pt) to [out=45,in=-90] (10pt,15pt)
(5pt,10pt) to [out=135,in=-90] (0,15pt) (0,5pt) to [out=135,in=-90] (-10pt,15pt);
\end{tikzpicture}
\quad (\text{coassociativity}),
\qquad
\begin{tikzpicture}[scale=2,baseline=-12]
\node (x) [counit]{};
\draw[thick] (x) to [out=-90,in=135] +(5pt,-5pt);
\draw[thick] (x)+(10pt,0) to [out=-90,in=45] ++(5pt,-5pt)--+(0,-5pt);
\end{tikzpicture}
\ =\
\begin{tikzpicture}[scale=2,baseline=8]
\draw[thick] (0,0)--(0,10pt);
\end{tikzpicture}
\ =\
\begin{tikzpicture}[scale=2,baseline=-12]
\node (x) [counit]{};
\draw[thick] (x) to [out=-90,in=45] +(-5pt,-5pt);
\draw[thick] (x)+(-10pt,0) to [out=-90,in=135] ++(-5pt,-5pt)--+(0,-5pt);
\end{tikzpicture}\quad
(\text{counitality}),
\end{equation}

\begin{equation}
\begin{tikzpicture}[scale=2,baseline=-3]
\node (x)[antipode]{};
\draw[thick] (x) to [out=90,in=-135](3pt,5pt)(x) to [out=-90,in=135] (3pt,-5pt)
-- (3pt,-10pt) -- (3pt,-5pt) .. controls +(4.5pt,2pt) and +(4.5pt,-2pt) ..  (3pt,5pt) -- (3pt,10pt)
;
\end{tikzpicture}
\ =\
\begin{tikzpicture}[scale=2,baseline=-10]
\node (x)[unit]{};
\path (x)+(0,-7pt) node (y) [counit]{};
\draw[thick] (x)--+(0,7pt);
\draw[thick] (y)--+(0,-7pt);
\end{tikzpicture}
\ =\
\begin{tikzpicture}[scale=2,baseline=-3]
\node (x)[antipode]{};
\draw[thick] (x) to [out=90,in=-45](-3pt,5pt) (x) to [out=-90,in=45] (-3pt,-5pt)
-- (-3pt,-10pt) -- (-3pt,-5pt) .. controls +(-4.5pt,2pt) and +(-4.5pt,-2pt) ..  (-3pt,5pt) -- (-3pt,10pt);
\end{tikzpicture}\quad
(\text{invertibility}),
\end{equation}
and
\begin{equation}\label{graph-comp1}
\begin{tikzpicture}[scale=3,baseline=20]
\draw[thick] (0,0)--(0,5pt) to [out=135,in=-135] (0,10pt)--(0,15pt)
(10pt,0)--(10pt,5pt) to [out=45,in=-45] (10pt,10pt)--(10pt,15pt)
(0,5pt)--(10pt,10pt) (10pt,5pt)--(5.5pt,7.25pt)(4.5pt,7.75pt)--(0pt,10pt);
\end{tikzpicture}
\quad\ =\
\begin{tikzpicture}[scale=3,baseline=20]
\draw[thick] (0,0) to [out=90,in=-135] (5pt,5pt)--(5pt,10pt)
to [out=135,in=-90] (0,15pt)
(10pt,0) to [out=90,in=-45] (5pt,5pt) (5pt,10pt) to [out=45,in=-90] (10pt,15pt);
\end{tikzpicture}\quad
(\text{compatibility}).
\end{equation}

The fact that all morphisms are braided linear maps (including the braiding $\beta_{H,H}$ itself) means that the following graphical relations hold:
\begin{equation}\label{graph-comp}
\begin{tikzpicture}[baseline=13,yscale=1.1]
\draw[thick] (0,1) to [out=-90,in=90] (1,0);
\draw[line width=3pt,white] (1,1.05) to [out=-90,in=90] (0,0.05);
 \node (x) [unit] at (0,0)  {}  ;
\draw[thick] (x.north) to [out=90, in=-90] (1,1);
\end{tikzpicture}
\ =\
\begin{tikzpicture}[baseline=13,yscale=1.1]
\draw[thick] (0,1) to [out=-90,in=90] (1,0);
 \node (x) [unit] at (0.65,0.65)  {}  ;
\draw[thick] (x.north east) to [out=45, in=-90] (1,1);
\end{tikzpicture}
,\qquad
\begin{tikzpicture}[baseline=13,yscale=1.1]
 \node (x) [unit] at (1,0)  {}  ;
\draw[thick] (x.north) to [out=90, in=-90] (0,1);
\draw[line width=3pt,white] (1,1) to [out=-90,in=90] (0,0);
\draw[thick] (0,0) to [out=90,in=-90] (1,1);
\end{tikzpicture}
\ =\
\begin{tikzpicture}[baseline=13,yscale=1.1]
 \node (x) [unit] at (0.35,0.65)  {}  ;
\draw[thick] (x.north west) to [out=135, in=-90] (0,1);
\draw[line width=3pt,white] (1,1) to [out=-90,in=90] (0,0);
\draw[thick] (0,0) to [out=90,in=-90] (1,1);
\end{tikzpicture},
\end{equation}
\begin{equation}\label{graph-co-comp}
\begin{tikzpicture}[baseline=-18,yscale=-1.1,xscale=-1]
\draw[thick] (0,1) to [out=-90,in=90] (1,0);
\draw[line width=3pt,white] (1,1.05) to [out=-90,in=90] (0,0.05);
 \node (x) [counit] at (0,0)  {}  ;
\draw[thick] (x.south) to [out=90, in=-90] (1,1);
\end{tikzpicture}
\ =\
\begin{tikzpicture}[baseline=-18,yscale=-1.1,xscale=-1]
\draw[thick] (0,1) to [out=-90,in=90] (1,0);
 \node (x) [counit] at (0.65,0.65)  {}  ;
\draw[thick] (x.south west) to [out=45, in=-90] (1,1);
\end{tikzpicture}
,\qquad
\begin{tikzpicture}[baseline=-18,yscale=-1.1,xscale=-1]
 \node (x) [counit] at (1,0)  {}  ;
\draw[thick] (x.south) to [out=90, in=-90] (0,1);
\draw[line width=3pt,white] (1,1) to [out=-90,in=90] (0,0);
\draw[thick] (0,0) to [out=90,in=-90] (1,1);
\end{tikzpicture}
\ =\
\begin{tikzpicture}[baseline=-18,yscale=-1.1,xscale=-1]
 \node (x) [counit] at (0.35,0.65)  {}  ;
\draw[thick] (x.south east) to [out=135, in=-90] (0,1);
\draw[line width=3pt,white] (1,1) to [out=-90,in=90] (0,0);
\draw[thick] (0,0) to [out=90,in=-90] (1,1);
\end{tikzpicture},
\end{equation}
\begin{equation}\label{graph-ant-comp}
\begin{tikzpicture}[baseline=13,yscale=1.1]
\draw[thick] (0,1) to [out=-90,in=90] (1,0);
\node (x) [antipode] at (1/8,1/4)  {}  ;
\draw[line width=3pt,white] (x.north) to [out=90,in=-90] (1,1);
 \draw[thick] (x.north) to [out=90, in=-90] (1,1);
\draw[thick] (x.south) to [out=-90, in=90] (0,0);
\end{tikzpicture}
\ =\
\begin{tikzpicture}[baseline=-18,yscale=-1.1,xscale=-1]
\draw[thick] (0,1) to [out=-90,in=90] (1,0);
\node (x) [antipode] at (1/8,1/4)  {}  ;
\draw[line width=3pt,white] (x.south) to [out=90,in=-90] (1,1);
 \draw[thick] (x.south) to [out=90, in=-90] (1,1);
\draw[thick] (x.north) to [out=-90, in=90] (0,0);
\end{tikzpicture}
,\qquad
\begin{tikzpicture}[baseline=13,yscale=1.1]
 \node (x) [antipode] at (7/8,1/4)  {}  ;
\draw[thick] (x.north) to [out=90, in=-90] (0,1);
\draw[thick] (x.south) to [out=-90, in=90] (1,0);
\draw[line width=3pt,white] (1,1) to [out=-90,in=90] (0,0);
\draw[thick] (0,0) to [out=90,in=-90] (1,1);
\end{tikzpicture}
\ =\
\begin{tikzpicture}[baseline=-18,yscale=-1.1,xscale=-1]
 \node (x) [antipode] at (7/8,1/4)  {}  ;
\draw[thick] (x.south) to [out=90, in=-90] (0,1);
\draw[thick] (x.north) to [out=-90, in=90] (1,0);
\draw[line width=3pt,white] (1,1) to [out=-90,in=90] (0,0);
\draw[thick] (0,0) to [out=90,in=-90] (1,1);
\end{tikzpicture},
\end{equation}
\begin{equation}\label{eq:braiding-product}
\begin{tikzpicture}[scale=1.5,baseline=20,x=1pt,y=1pt]
\draw[thick]  (0,13)--(0,17)(0,13) to [out=-135,in=90] (-5,3) (0,13)
  to [out=-45,in=90] (5,3);
\draw[thick] (15,3) --  (15,12) to  [out=90,in = -90] (-5,25);
\draw[line width=2.5pt,white] (0,15) to [out=90,in=-90] (10,25);
\draw[thick] (0,15) to [out=90,in=-90] (10,25);
    \end{tikzpicture}
\ =\
\begin{tikzpicture}[scale=1.5,baseline=20,x=1pt,y=1pt]
\draw[thick] (15,3) to [out=90,in=-90] (-5,25);
\draw[line width=2.5pt,white](10,21) to [out=-135,in=90] (-5pt,3pt);
\draw[line width=2.5pt,white] (5,3) to [out=90,in=-90] (15,17);
\draw[thick] (5,3) to [out=90,in=-90] (15,17);
\draw[thick]  (10,21)--(10,25)(10,21) to [out=-135,in=90] (-5pt,3pt)
(10,21) to [out=-30,in=90] (15,17);
    \end{tikzpicture},
\qquad
\begin{tikzpicture}[scale=1.5,baseline=20,x=1pt,y=1pt]
\draw[thick] (10,15) to [out=90,in=-90] (0,25);
\draw[thick]  (10,13)--(10,15)(10,13) to [out=-45,in=90] (15,3) (10,13)
  to [out=-135,in=90] (5,3);
\draw[line width=2.5pt,white] (-5,12) to [out=90,in = -90] (15,25);
\draw[thick] (-5,3) --  (-5,12) to  [out=90,in = -90] (15,25);
    \end{tikzpicture}
\ =\
\begin{tikzpicture}[scale=1.5,baseline=20,x=1pt,y=1pt]
\draw[thick] (5,3) to [out=90,in=-90] (-5,17);
\draw[thick]  (0,21)--(0,25)(0,21) to [out=-45,in=90] (15,3)
(0,21) to [out=-120,in=90] (-5,17);
\draw[line width=2.5pt,white] (-5,3) to [out=90,in=-90] (15,25);
\draw[thick] (-5,3) to [out=90,in=-90] (15,25);
    \end{tikzpicture},
\end{equation}
\begin{equation}
\begin{tikzpicture}[scale=-1.5,baseline=-20,x=1pt,y=1pt]
\draw[thick]  (0,13)--(0,17)(0,13) to [out=-135,in=90] (-5,3) (0,13)
  to [out=-45,in=90] (5,3);
\draw[thick] (15,3) --  (15,12) to  [out=90,in = -90] (-5,25);
\draw[line width=2.5pt,white] (0,15) to [out=90,in=-90] (10,25);
\draw[thick] (0,15) to [out=90,in=-90] (10,25);
    \end{tikzpicture}
\ =\
\begin{tikzpicture}[scale=-1.5,baseline=-20,x=1pt,y=1pt]
\draw[thick] (15,3) to [out=90,in=-90] (-5,25);
\draw[line width=2.5pt,white](10,21) to [out=-135,in=90] (-5pt,3pt);
\draw[line width=2.5pt,white] (5,3) to [out=90,in=-90] (15,17);
\draw[thick] (5,3) to [out=90,in=-90] (15,17);
\draw[thick]  (10,21)--(10,25)(10,21) to [out=-135,in=90] (-5pt,3pt)
(10,21) to [out=-30,in=90] (15,17);
    \end{tikzpicture},
\qquad
\begin{tikzpicture}[scale=-1.5,baseline=-20,x=1pt,y=1pt]
\draw[thick] (10,15) to [out=90,in=-90] (0,25);
\draw[thick]  (10,13)--(10,15)(10,13) to [out=-45,in=90] (15,3) (10,13)
  to [out=-135,in=90] (5,3);
\draw[line width=2.5pt,white] (-5,12) to [out=90,in = -90] (15,25);
\draw[thick] (-5,3) --  (-5,12) to  [out=90,in = -90] (15,25);
    \end{tikzpicture}
\ =\
\begin{tikzpicture}[scale=-1.5,baseline=-20,x=1pt,y=1pt]
\draw[thick] (5,3) to [out=90,in=-90] (-5,17);
\draw[thick]  (0,21)--(0,25)(0,21) to [out=-45,in=90] (15,3)
(0,21) to [out=-120,in=90] (-5,17);
\draw[line width=2.5pt,white] (-5,3) to [out=90,in=-90] (15,25);
\draw[thick] (-5,3) to [out=90,in=-90] (15,25);
    \end{tikzpicture},
\end{equation}
and the Yang--Baxter equation for the braiding
\begin{equation}
\begin{tikzpicture}[baseline=25]
\coordinate (m1) at (0.5,1);
\draw[thick] (2,0) to  [out=90,in = -90] (0,2);
\draw[line width=2.5pt,white] (1,0) to[out=90,in=-90] (m1) to[out=90,in=-90] (1,2);
\draw[thick] (1,0) to [out=90,in=-90] (m1) to [out=90,in=-90] (1,2);
\draw[line width=2.5pt,white] (0,0) to [out=90,in=-90] (2,2);
\draw[thick] (0,0) to [out=90,in=-90] (2,2);
\end{tikzpicture}
\ =\
\begin{tikzpicture}[baseline=-31,scale=-1]
\coordinate (m2) at (0.5,1);
\draw[thick] (2,0) to  [out=90,in = -90] (0,2);
\draw[line width=2.5pt,white] (1,0) to[out=90,in=-90] (m2) to[out=90,in=-90] (1,2);
\draw[thick] (1,0) to [out=90,in=-90] (m2) (m2) to [out=90,in=-90] (1,2);
\draw[line width=2.5pt,white] (0,0) to [out=90,in=-90] (2,2);
\draw[thick] (0,0) to [out=90,in=-90] (2,2);
\end{tikzpicture}.
\end{equation}
\begin{remark}
 Two relations~\eqref{eq:braiding-product} can be derived by combining relations~\eqref{graph-comp} with the `fusion relation'
 \begin{equation}\label{eq:tau-hat-graph}
\begin{tikzpicture}[scale=1.5,baseline=20,x=1pt,y=1pt]
\draw[thick] (0,13)--(0,15) (15,13)--(15,15);
\path [draw,thick]
(-5,3)  to [in=-135,out=90]  (0,13) (5,3) to [in=-45,out=90] (0,13);
\path [draw,thick]
(10,3)  to [in=-135,out=90] (15,13) (20,3) to [in=-45,out=90] (15,13);
\path [draw,thick]
(15,15) to [out=90,in=-90] (0,30);
\path [draw,thick]
(15,15) to [out=90,in=-90] (0,30);
\draw[line width=2.5pt,white] (0,15) to [out=90,in=-90] (15,30);
\path [draw,thick]
(0,15) to [out=90,in=-90] (15,30);
\path [draw,thick]
(0,15) to [out=90,in=-90] (15,30);
    \end{tikzpicture}
    \ =\
\begin{tikzpicture}[scale=1.5,baseline=20,x=1pt,y=1pt]
\coordinate (s1) at (6,19);
\path [draw,thick]  (0,28)--(0,32)  (15,28)--(15,32);
\draw[thick] (0,28) to [out=-135,in=90] (-3,23);
\draw[thick] (3,25)  to [in=-50,out=130] (0,28) (10,23) to [in=-130,out=50](15,28) ;
\draw[thick]  (15,28) to [out=-45,in=90] (18,23);
\path [draw,thick] (10,3) to [in=-90,out=90] (-3,23);
\path [draw,thick] (20,3) to [in=-50,out=90] (3,25);
\draw[line width=2.5pt,white] (10,23) to [out=-130,in=90] (-5,3);
\draw[line width=2.5pt,white] (18,23) to [out=-90,in=90] (5,3);
\path [draw,thick]  (-5,3) to [in=-130,out=90](10,23)
 ;
\path [draw,thick]  (5,3) to [out=90,in=-90] (18,23)
;
    \end{tikzpicture}\ .
\end{equation}
\end{remark}
\begin{remark}
 The braiding $\beta_{H,H}$  can be expressed entirely in terms of the structural maps, via the formula
 \begin{equation}\label{eq:braiding-str-constants}
\beta_{H,H}=(\nabla\otimes\nabla)(S\otimes (\Delta\nabla)\otimes S)(\Delta\otimes\Delta),
\end{equation}
or in graphical form:
\begin{equation}\label{tau-comp}
\beta_{H,H}=\
\begin{tikzpicture}[xscale=1,yscale=1.4,baseline=17]
\path [draw,thick]
 (1,0) to [in=-90,out=90] (0,1);
\draw[line width=3pt,white] (1,1) to [out=-90,in=90] (0,0);
\path [draw,thick]
 (0,0) to [in=-90,out=90] (1,1);
\end{tikzpicture}
 \ =\
\begin{tikzpicture}[scale=5/2,baseline=24,x=1pt,y=1pt]
\draw (0,11pt) node (x) [antipode]{} ++(11.5,0) node (y) [antipode]{};
\path [draw,thick]
 (x) to [out=90,in=-135]++(1.5,4)  (1.5,7) to [in=-90,out=135] (x)
(y) to [out=90,in=-45]++(-1.5,4) (10,7) to [in=-90,out=45]  (y);
\path [draw,thick]
 (1.5,3)  --(1.5,7)-- (5.75,9) --(5.75,9) --(5.75,13)-- (1.5,15)--(1.5,19)
(10,3)--(10,7)-- (5.75,9)
(5.75,13) --(10,15) --(10,19);
\end{tikzpicture}\ .
\end{equation}
\end{remark}
\begin{remark}
The following relation between the braiding and the antipode holds:
\begin{equation}
S\nabla=\nabla\beta_{H,H}(S\otimes S),
\end{equation}
or in graphical form
\begin{equation}\label{eq:antipode-braiding}
\begin{tikzpicture}[xscale=.6,yscale=.6,baseline=12]
\coordinate (pr) at (0.5,1);
\node (x) at (0.5,1.5) [antipode]{};
\path [draw,thick] (0,0) to [out=90,in=-135] (pr)
 (1,0) to [out=90,in=-45] (pr)--(x)--(0.5,2);
\end{tikzpicture}
\ =\
\begin{tikzpicture}[xscale=.6,yscale=12/25,baseline=6]
\coordinate (pr) at (0.5,1.5);
\node (x1) at (0,0) [antipode]{};
\node (x2) at (1,0) [antipode]{};
\draw[thick]
(0,1) to [out=-90,in=90] (x2);
\draw[line width=3pt,white] (1,1) to [out=-90,in=90] (x1);
\draw[thick] (1,1) to [out=-90,in=90] (x1);
\path [draw,thick]
  (0,-0.5)--(x1)(1,-0.5)--(x2)
(0,1) to [out=90, in=-135] (pr) (1,1) to [out=90, in=-45] (pr)--(0.5,2);
\end{tikzpicture}\ .
\end{equation}
\end{remark}

\section{Exterior algebras with deformed braided Hopf structures}\label{sec:1}
\subsection{Induced braidings on tensor and Nichols algebras}
Let $(V,\tau)$ be a braided vector space. As already mentioned in the Introduction, the tensor algebra $T(V)$ is a braided Hopf algebra in which all the elements of $V$ are primitive.  The braiding
\be
\hat\tau:=\beta_{T(V),T(V)}\colon T(V)\otimes T(V)\to T(V)\otimes T(V)
\ee is induced from $\tau$
 in the sense that  $\hat\tau\vert_{V\otimes V}=\tau$,  and it satisfies the compatibility conditions with the unit $\eta\colon\lcf\to T(V)$,
\begin{equation}
\label{Hr6}\hat\tau(\eta\otimes\operatorname{id})=\operatorname{id}\otimes\eta,\quad \hat\tau(\operatorname{id}\otimes\eta)=\eta\otimes\operatorname{id},
\end{equation}
which correspond to the graphical identities in~\eqref{graph-comp}. It also satisfies
the fusion relation for the product $\nabla\colon T(V)\otimes T(V)\to T(V)$,
\begin{equation}\label{eq:tau-hat}
\hat\tau(\nabla\otimes\nabla)=(\nabla\otimes\nabla)(\operatorname{id}\otimes \hat\tau\otimes \operatorname{id})( \hat\tau\otimes\hat\tau)(\operatorname{id}\otimes \hat\tau\otimes \operatorname{id}),
\end{equation}
which takes the graphical form shown in~\eqref{eq:tau-hat-graph}.
\begin{remark}\label{rem:pres-degree}
An immediate consequence of the fusion formula~\eqref{eq:tau-hat} is the \emph{\color{blue} preservation of degree} along the strands of the induced braiding $\hat\tau$, in the sense that
 \begin{equation}\label{eq:pres-degree}
\forall m,n\in\Z_{\ge0}\colon \hat\tau(T^m(V)\otimes T^n(V))=T^n(V)\otimes T^m(V).
\end{equation}
Since the associated Nichols algebra $\mathcal{B}(V)$ inherits the grading by degree from the tensor algebra, its corresponding braiding  also preserves the degree along the strands.
\end{remark}
\subsection{Braidings in vector spaces with linearly ordered bases}
Let $V$ be a $\lcf$-vector space, and let $\B$  be a linearly ordered basis of $V$.
Define the Heaviside theta symbol
\be\theta_{a,b}
\in \{0,1\},\quad a,b\in \B,
\ee  by setting $\theta_{a,b}=1$ if $a>b$ and $\theta_{a,b}=0$ otherwise.  The symmetric part of the Heaviside symbol is related to the Kronecker delta by
\begin{equation}\label{eq:theta-delta}
\theta_{a,b}+\theta_{b,a}=1-\delta_{a,b},\quad \forall a,b\in\B.
\end{equation}

For any scalar $p\ne0$, we define a linear map $\tau\colon V\otimes V\to V\otimes V$ by
 \begin{align}\label{eq:sl-braiding}
 \tau(a\otimes b)
& =\big(p^{\theta_{a,b}}-1\big)a\otimes b-p^{\theta_{b,a}}b\otimes a\\
&=-\theta_{a,b}\big(1-p\big)a\otimes b-p^{\theta_{b,a}}b\otimes a  \notag \\
&=
\begin{cases}
 -a\otimes a &\text{ if } a=b;\\
-(1-p)a\otimes b- b\otimes a&  \text{ if } a>b; \\
-p b\otimes a  &  \text{ if } a<b,
\end{cases} \notag
\end{align}
for all  $ a,b\in \B$. Since $\tau$ is invertible and satisfies the quantum Yang--Baxter equation~\eqref{eq:ybe} over $V$, it qualifies as an $R$-matrix. Accordingly,
 $(V,\tau)$ forms a braided $\lcf$-vector space.

\begin{remark}
 When $\operatorname{dim}(V)=N$, the braiding $\tau$ in~\eqref{eq:sl-braiding} corresponds to the $R$-matrix of the quantum group $U_q(\mathfrak{sl}_N)$  evaluated at the $N$-dimensional fundamental representation,  see, for example~\cite[Exercise 1.5(a)]{MR1470954}.
\end{remark}
\begin{lemma}\label{le:hecke}
 Let $V$ be a vector space with a linearly ordered basis $\B$. Then, the $R$-matrix $\tau$ defined in~\eqref{eq:sl-braiding},  has  eigenvalues $-1$ and $p$ with the corresponding  eigenspaces spanned  by
 \be
 \{a\otimes b+b\otimes a\mid a< b\}\cup\{a\otimes a\mid a\in\B\}
 \ee and
 \be
 \{a\otimes b-p b\otimes a\mid a< b\}.
 \ee
  It is an $R$-matrix of the Hecke type in the sense that it satisfies the quadratic equation
 \begin{equation}\label{eq:hecke-relation}
 (\tau +\operatorname{id}_V)(\tau -p\operatorname{id}_V)=0.
\end{equation}

\end{lemma}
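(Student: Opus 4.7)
My plan is to proceed by direct computation on the natural basis $\{a\otimes b : a,b\in\mathbb{B}\}$ of $V\otimes V$, following the three cases in the definition~\eqref{eq:sl-braiding} of $\tau$: $a=b$, $a>b$, and $a<b$.

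First I would verify the eigenvector claims. The case $a=b$ is immediate from the defining formula. For $a<b$, I would use the $a<b$ branch to compute $\tau(a\otimes b)=-p\,b\otimes a$, and apply the $b>a$ branch to $b\otimes a$ to get $\tau(b\otimes a)=-(1-p)b\otimes a-a\otimes b$. Substituting into the two candidate eigenvectors then yields, by two-line computations,
\begin{equation*}
\tau(a\otimes b + b\otimes a) = -(a\otimes b+b\otimes a), \qquad \tau(a\otimes b - p\,b\otimes a) = p(a\otimes b-p\,b\otimes a),
\end{equation*}
giving the eigenvalues $-1$ and $p$ together with the stated eigenvectors.

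The second step is the structural observation that $V\otimes V$ decomposes as a $\tau$-invariant direct sum of the one-dimensional lines $\lcf\cdot(a\otimes a)$, on which $\tau=-\operatorname{id}$, and the two-dimensional blocks spanned by $\{a\otimes b,b\otimes a\}$ for $a<b$, each preserved by the formulas above. On each two-dimensional block, the proposed eigenvectors are related to $(a\otimes b,b\otimes a)$ by the change-of-basis matrix $\bigl(\begin{smallmatrix}1 & 1\\ 1 & -p\end{smallmatrix}\bigr)$ with determinant $-(p+1)$, so they form a basis whenever $p\ne -1$; a dimension count $\binom{|\mathbb{B}|}{2}+|\mathbb{B}|+\binom{|\mathbb{B}|}{2}=|\mathbb{B}|^2$ then confirms completeness and establishes the eigenspace description.

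The Hecke relation~\eqref{eq:hecke-relation} follows at once: on each invariant block the minimal polynomial of $\tau$ divides $(x+1)(x-p)$, hence so does the minimal polynomial on $V\otimes V$. At the degenerate value $p=-1$ the two eigenvalues coincide and the proposed spanning set becomes linearly dependent, but on each two-dimensional block $\tau$ then has matrix $\bigl(\begin{smallmatrix}0 & -1\\ 1 & -2\end{smallmatrix}\bigr)$ with characteristic polynomial $(x+1)^2=(x+1)(x-p)$, so the Hecke relation still holds by Cayley--Hamilton (this edge case is not needed for the applications in the sequel, since $\Lambda_p(V)$ is only of interest for generic $p$). There is no real obstacle; the only care needed is in tracking the asymmetric use of the Heaviside indices $\theta_{a,b}$ versus $\theta_{b,a}$ in the two off-diagonal branches of~\eqref{eq:sl-braiding}.
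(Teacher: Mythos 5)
Your proposal is correct, and its computational core coincides with the paper's: both verify the action of $\tau$ on the basis vectors $a\otimes a$ and on the two-dimensional blocks spanned by $a\otimes b,\,b\otimes a$ with $a<b$, and read off the eigenvectors $a\otimes b+b\otimes a$ and $a\otimes b-p\,b\otimes a$ (the paper packages this via the one-parameter family $w_{\alpha,a,b}=a\otimes b+\alpha\,b\otimes a$, which is only a cosmetic difference). The one genuine divergence is how the Hecke relation~\eqref{eq:hecke-relation} is obtained: the paper verifies $(\tau+\operatorname{id})(\tau-p\operatorname{id})=0$ directly on every basis vector $a\otimes b$ in the three cases $a=b$, $a<b$, $a>b$, using that $a\otimes b+b\otimes a$ and $a\otimes a$ lie in $\ker(\tau+\operatorname{id})$; this computation is uniform in $p$ and needs no case distinction. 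You instead deduce it from diagonalizability of each block, which forces the separate Cayley--Hamilton check at $p=-1$ --- correct, but slightly less economical. On the other hand, your argument explicitly establishes that the listed vectors \emph{span} the eigenspaces (via the block change-of-basis with determinant $-(p+1)$), a completeness point the paper's proof leaves implicit; just note that the concluding count $\binom{|\B|}{2}+|\B|+\binom{|\B|}{2}=|\B|^2$ presumes $\B$ finite, whereas the blockwise decomposition you already set up covers infinite-dimensional $V$ without it.
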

\begin{proof}
 For any $a,b\in \B$ such that $a<b$, and a scalar $\alpha\in\lcf_{\ne0}$, consider the vector
\be w_{\alpha,a,b}:=a\otimes b+\alpha b\otimes a.
\ee
The action
\be
\tau w_{\alpha,a,b}=-pb\otimes a+\alpha(( p-1)b\otimes a-a\otimes b)=-\alpha w_{1-p+p\alpha^{-1},a,b}
\ee
implies  that $w_{1,a,b}$ and $w_{-p,a,b}$ are eigenvectors
\be
\tau w_{1,a,b}=-w_{1,a,b},\quad \tau w_{-p,a,b}=pw_{-p,a,b}
\ee
corresponding to eigenvalues $-1$ and $p$, respectively. Additionally, for any $a\in \B$, the vector $a\otimes a$ is also an eigenvector corresponding to the eigenvalue $-1$.

The quadratic equation~\eqref{eq:hecke-relation} is verified as follows:
\be
\forall a\in\B\colon  (\tau +\operatorname{id}_V)(\tau -p\operatorname{id}_V)(a\otimes a)= -(1+p)(\tau +\operatorname{id}_V)(a\otimes a)=0;
\ee
\be
\forall a,b\in\B,\ a<b\colon (\tau +\operatorname{id}_V)(\tau -p\operatorname{id}_V)(a\otimes b)=-p (\tau +\operatorname{id}_V)(a\otimes b+b\otimes a)=0;
\ee
\be
\forall a,b\in\B,\ a>b\colon (\tau +\operatorname{id}_V)(\tau -p\operatorname{id}_V)(a\otimes b)=-(\tau +\operatorname{id}_V)(a\otimes b+b\otimes a)=0.
\ee
\end{proof}
\begin{theorem}\label{thm:TV/J2}
 Let $V$ be a vector space with a linearly ordered basis $\B\subset V$, and $\tau$ the braiding on $V$ defined in~\eqref{eq:sl-braiding}. The exterior algebra $\bigwedge V$ is a braided Hopf algebra where all the elements of $V$ are primitive and the braiding is induced by $\tau$.
\end{theorem}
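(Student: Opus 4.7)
The plan is to realise $\bigwedge V$ as the quotient of the braided Hopf algebra $T(V)$ by a Hopf ideal preserved by the induced braiding $\hat\tau$. Let $W\subset V\otimes V$ denote the $(-1)$-eigenspace of $\tau$, which by Lemma~\ref{le:hecke} is spanned by $\{a\otimes b+b\otimes a:a<b\}\cup\{a\otimes a:a\in\B\}$, and let $I\subset T(V)$ be the two-sided ideal generated by $W$. Expanding $x^2$ in a basis as $x^2=\sum_ic_i^2a_i^2+\sum_{i<j}c_ic_j(a_ia_j+a_ja_i)$ shows that $x^2\in I$ for every $x\in V$, while the generators of $I$ clearly vanish in $\bigwedge V$; hence $T(V)/I\cong\bigwedge V$ as algebras.

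First I would verify that $W$ consists of primitive elements of $T(V)$. Using the compatibility axiom~\eqref{Hr7}, the primitivity of each $v\in V$, and the unit--braiding relations~\eqref{Hr6}, a direct calculation yields
\begin{equation*}
\Delta(u\cdot v)=u\cdot v\otimes 1+1\otimes u\cdot v+(\operatorname{id}+\tau)(u\otimes v),
\end{equation*}
so an element $u\otimes v\in V\otimes V$ is primitive in $T(V)$ precisely when it lies in $W=\ker(\operatorname{id}+\tau)$.

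The crucial step is to show that $\hat\tau$ preserves $I\otimes T(V)+T(V)\otimes I$. By the fusion relation~\eqref{eq:tau-hat}, the restriction of $\hat\tau$ to $V^{\otimes 2}\otimes V$ equals $(\tau\otimes\operatorname{id})(\operatorname{id}\otimes\tau)=:\tau_1\tau_2$, where $\tau_i$ denotes $\tau$ acting on the $i$-th and $(i+1)$-st tensor factors. For $w\in W$ and $x\in V$, the Yang--Baxter equation together with $\tau(w)=-w$ gives
\begin{equation*}
(\operatorname{id}+\tau_2)\tau_1\tau_2(w\otimes x)=(\tau_1\tau_2+\tau_1\tau_2\tau_1)(w\otimes x)=\tau_1\tau_2\bigl((\operatorname{id}+\tau_1)(w\otimes x)\bigr)=0,
\end{equation*}
so $\hat\tau(w\otimes x)\in\ker(\operatorname{id}+\tau_2)=V\otimes W$; the symmetric inclusion $\hat\tau(V\otimes W)\subset W\otimes V$ is proved by the mirror argument. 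Iterating via~\eqref{eq:tau-hat} then yields $\hat\tau(W\otimes T^n(V))\subset T^n(V)\otimes W$ and $\hat\tau(T^n(V)\otimes W)\subset W\otimes T^n(V)$ for every $n\ge 0$, and one further fusion step, applied to an element of the form $awb$ with $a,b\in T(V)$ and $w\in W$, propagates the compatibility to the whole ideal: $\hat\tau(I\otimes T(V))\subset T(V)\otimes I$ and $\hat\tau(T(V)\otimes I)\subset I\otimes T(V)$.

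Granted this control, $I$ is a Hopf ideal: $\Delta(I)\subset I\otimes T(V)+T(V)\otimes I$ follows from~\eqref{Hr7} applied to the generators $w\in W$ together with the $\hat\tau$-compatibility used for the middle factor; $\epsilon(I)=0$ is immediate since $W$ sits in positive degree; and $S(I)\subset I$ combines $S(w)=-w\in W$ (a consequence of primitivity) with the braided antipode relation~\eqref{eq:antipode-braiding} and once more the preservation of $I$ by $\hat\tau$. Consequently $T(V)/I\cong\bigwedge V$ inherits product, coproduct, counit, antipode, and the braiding induced by $\tau$, all satisfying~\eqref{Hr1}--\eqref{Hr7}, with each $x\in V$ remaining primitive. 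The hard part will be the propagation step: while the base case $\hat\tau(W\otimes V)\subset V\otimes W$ is a short Hecke--braid computation, extending this compatibility through the full ideal requires a disciplined iterated use of~\eqref{eq:tau-hat} to track how a ``$W$ factor'' is transported by the braiding through products of arbitrary length.
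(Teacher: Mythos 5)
Your proposal is correct and follows essentially the same route as the paper: identify the $(-1)$-eigenspace of $\tau$ from Lemma~\ref{le:hecke} with the degree-two primitives of $T(V)$ via $\Delta(uv)=uv\otimes 1+1\otimes uv+(\operatorname{id}+\tau)(u\otimes v)$, and pass to the quotient of $T(V)$ by the Hopf ideal these elements generate, which is the exterior algebra. The only difference is that you spell out the braiding-compatibility $\hat\tau(I\otimes T(V))\subseteq T(V)\otimes I$ (via the Hecke relation and the Yang--Baxter equation) needed to see that the ideal is a Hopf ideal, a step the paper's proof leaves implicit when it asserts that $\mathfrak{J}_2$ is a Hopf ideal.
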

\begin{proof}
Any primitive element of degree two of the tensor algebra $T(V)$ is an eigenvector of $\tau$  corresponding to the eigenvalue $-1$:
\be
\Delta w=w\otimes 1+1\otimes w\quad\Rightarrow\quad \tau w=-w,\quad w\in T^2(V)\simeq V\otimes V.
\ee
This follows from the formula for the coproduct of $T(V)$ in degree two:
\be
\forall u,v\in V\colon \Delta (uv)=uv\otimes 1+1\otimes uv+u\otimes v+\tau(u\otimes v).
\ee
Lemma~\ref{le:hecke} allows us to conclude that
the primitive elements of degree two in $T(V)$  are linear combinations of  the  elements $ab+ba$ for $a,b\in \B$ with $a<b$, and $a^2$ for $a\in\B$.
This implies that the two sided ideal $\mathfrak{J}_2$ generated by these elements is a Hopf ideal, and the corresponding quotient algebra inherits the structure of a braided Hopf algebra from $T(V)$. In the quotient algebra $T(V)/\mathfrak{J}_2$, these elements vanish, and we obtain the standard presentation of the exterior algebra relative to the chosen basis $\B\subset V$:
\begin{equation}\label{q-def}
\bigwedge \!\!V\simeq T(V)/\mathfrak{J}_2=\lcf\langle \B\mid \{ba=-ab\mid a,b\in \B\}\cup\{a^2\mid a\in\B\}\rangle.
\end{equation}
\end{proof}
\begin{remark}
Notably, Theorem~\ref{thm:TV/J2} applies to both finite-dimensional and infinite-dimensional $V$.
\end{remark}
\subsection{Notation} In what follows, we denote by $\Lambda_p(V)$ the exterior algebra endowed with the braided Hopf algebra structure described in Theorem~\ref{thm:TV/J2}, and by $\hat\tau$ its associated braiding. For any $k\in\Z_{\ge0}$, we write $\Lambda_p^k(V)$ for the  subspace of $\Lambda_p(V)$ consisting of all the elements of degree $k$:
\be
\Lambda_p^k(V):=T^k(V)/(\mathcal{J}_2\cap T^k(V)),
\ee
and we denote by
\begin{equation}
\pi_k\colon \Lambda_p(V)\to\Lambda_p^k(V)
\end{equation}
the canonical projection. The corresponding graphical interpretations are as follows:
\begin{equation}
\pi_k=\
\begin{tikzpicture}[baseline=10]
 \draw[thick,mid arrow] (0,0)--(0,1) node[midway,right]{\tiny $k$};
\end{tikzpicture},\quad
\operatorname{id}_{\Lambda_p(V)}=\sum_{k\ge0}\pi_k
=\
\begin{tikzpicture}[baseline=10]
 \draw[thick,mid arrow] (0,0)--(0,1) node[midway,right]{
 };
\end{tikzpicture}.
\end{equation}
From now on, we use string diagrams with edges oriented upward,  to emphasise the forthcoming connection with the MOY diagrammatic calculus~\cite{MR1659228}.

We also denote
\begin{equation}\label{eq:proj-copr}
\Delta_{k)}:=(\pi_k\otimes\operatorname{id}_{\Lambda_p(V)})\Delta=
\begin{tikzpicture}[scale=1.65,baseline=-2]
\draw[thick,postaction={on each segment={mid arrow}} ] (0,-10pt)-- (0,0) to [out=135,in=-90] node[midway,left]{\tiny $k$}(-5pt,10pt) (0,0)
  to [out=45,in=-90] (5pt,10pt);
\end{tikzpicture},\quad \Delta_{(k}:=(\operatorname{id}_{\Lambda_p(V)}\otimes\pi_k)\Delta=
\begin{tikzpicture}[scale=1.65,baseline=-2]
\draw[thick,postaction={on each segment={mid arrow}} ] (0,-10pt)-- (0,0) to [out=135,in=-90] (-5pt,10pt) (0,0)
  to [out=45,in=-90] node[midway,right]{\tiny $k$}(5pt,10pt);
\end{tikzpicture}.
\end{equation}

We adopt the notation $|E|$ to denote the cardinality of a set $E$, which is the number of its elements when $E$ is finite.

For a set $E$, we denote its \emph{\color{blue}power set} (the set of all subsets of $E$) by
\be
\mathcal{P}(E):=\{A\mid A\subseteq E\},
\ee
and the set of all  \emph{\color{blue}finite subsets} of $E$ by
\be
 \fP(E):=\{A\mid A\subseteq E,\ |A|<\infty\}.
\ee
The set of all \emph{\color{blue}$k$-element subsets} of $E$ is denoted by
\begin{equation}\label{eq:k-subsets}
\binom{E}{k}:=\{A\subseteq E\mid |A|=k\}.
\end{equation}

For a finite linearly ordered set $E$, we denote  its minimal and maximal elements by $\min(E)$ and $\max(E)$, respectively.
Two finite subsets $E$ and $F$ of a linearly ordered set satisfy the inequality $E<F$
 if and only if $\max(E)< \min(F)$.

For convenience, in the case where $\dim(V)<\infty$, we identify the linearly ordered basis $\B\subset V$ with the set of integers $\{1,\dots,|\B|\}$, which is equipped with the natural order.

We use the following notation:
\begin{itemize}
 \item  A $q$-deformed Pochhammer symbol
 \be
(x;q)_n:=\prod_{i=0}^{n-1}(1-xq^i),\quad (x)_n:=(x;x)_n.
\ee
\item  A $q$-deformed integer
\be
[n]_q:=\frac{1-q^n}{1-q}=\sum_{i=0}^{n-1}q^i.
\ee
\item A $q$-deformed factorial
\be
[n]_q^!:=\prod_{i=1}^n[i]_q=(q)_n(q)_1^{-n}.
\ee
\item A $q$-deformed binomial coefficient
\be
\qbinom{n}{k}{q}:=\frac{[n]_q^!}{[k]_q^![n-k]_q^!}=\frac{(q)_n}{(q)_k(q)_{n-k}}.
\ee
\item A  $q$-`divided power'
 \begin{equation}\label{eq:devided-power}
 x^{\langle n\rangle_q}:=\frac{x^n}{[n]_q^!}.
\end{equation}
 \item A signed Gaussian exponential
 \begin{equation}\label{eq:gauss}
\gauss{k}:=(-1)^k p^{k(k-1)/2}.
\end{equation}
\item A quantum exponential function
\begin{equation}
\operatorname{exp}_q(x):=\sum_{n=0}^\infty x^{\langle n\rangle_q}=\frac{1}{\big( x(1-q);q\big)_\infty}.
\end{equation}
\end{itemize}

We will often write $f_{E,F,\dots,G}$ instead of $f_E\otimes f_F\otimes\dots\otimes f_G$, if the context allows usto avoid ambiguity.

\subsection{Set-theoretic bases in exterior algebras }
For any two finite subsets $A,B$ of a linearly ordered set, we extend the Heaviside theta-symbol as follows
\begin{equation}\label{eq:integer-mAB}
\theta_{A,B}=\sum_{a\in A}\sum_{b\in B}\theta_{a,b}.
\end{equation}
\begin{remark}
For any finite linearly ordered set $E$ and its subset $A\subset E$, the integer $\theta_{A,E\setminus A}$ can be characterised  by the formula
 \be
\theta_{A,E\setminus A}+\frac{|A|(|A|+1)}{2}=\sum_{k=1}^{|A|} g(k)
 \ee
 where
 \be
 g\colon \{1,2,\dots, |A|\}|\to \{1,2,\dots,|E|\}
 \ee
 is the unique strictly increasing map whose image under the order preserving identification of $E$ with $\{1,2,\dots,|E|\}$ coincides with $A$.
\end{remark}
\begin{remark}\label{rem:mAB+mBA}
 The integers $\theta_{A,B}$, defined in \eqref{eq:integer-mAB}, satisfy the symmetry relation
 \begin{equation}\label{eq:mAB+mBA}
 \theta_{A,B}+\theta_{B,A}=|A||B|-|A\cap B|
\end{equation}
 which is verified as follows:
\begin{align}
  \theta_{A,B}+\theta_{B, A}&=\sum_{i\in A}\sum_{j\in B}\theta_{i,j} +\sum_{i\in B}\sum_{j\in A}\theta_{i,j}
  =\sum_{i\in A}\sum_{j\in B}(\theta_{i,j}+\theta_{j,i})\\
  &=\sum_{i\in A}\sum_{j\in B}(1-\delta_{j,i}) =
  \sum_{i\in A}\sum_{j\in B}1-\sum_{i\in A\cap B}1=|A||B|-|A\cap B|\nonumber
\end{align}
where, in the third equality, we use the relation~\eqref{eq:theta-delta}.
\end{remark}
\begin{lemma}\label{lem:E<F}
Let $E$ and $F$ be finite subsets of $\B$ such that $E<F$. Then, the braiding $\hat\tau$ of $\Lambda_p(V)$ acts on $f_{E,F}$ as follows:
\begin{equation}
 \hat\tau f_{E,F}=(-p)^{|E||F|}f_{F,E}.
\end{equation}
\end{lemma}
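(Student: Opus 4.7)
The plan is to reduce $\hat\tau(f_{E,F})$ to a product of $|E|\cdot|F|$ elementary braidings $\tau(a\otimes b)$ with $a\in E$, $b\in F$, each of which is ``clean'' in the sense that the condition $a<b$ (guaranteed by $E<F$) puts us in the single-term case $\tau(a\otimes b)=-p\,b\otimes a$ of~\eqref{eq:sl-braiding}. Writing $f_E=a_1\cdots a_m$ and $f_F=b_1\cdots b_n$ with $a_1<\cdots<a_m<b_1<\cdots<b_n$, I would work in $T(V)\otimes T(V)$ and project to $\Lambda_p(V)$ at the end; this is legitimate because the braiding on $\Lambda_p(V)$ is inherited from that of $T(V)$ through the Hopf ideal $\mathfrak{J}_2$.

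The argument is a double induction on $(m,n)=(|E|,|F|)$, driven by two specialisations of the fusion relation~\eqref{eq:tau-hat}, namely
\begin{equation*}
\hat\tau(\nabla\otimes\operatorname{id})=(\operatorname{id}\otimes\nabla)(\hat\tau\otimes\operatorname{id})(\operatorname{id}\otimes\hat\tau)
\quad\text{and}\quad
\hat\tau(\operatorname{id}\otimes\nabla)=(\nabla\otimes\operatorname{id})(\operatorname{id}\otimes\hat\tau)(\hat\tau\otimes\operatorname{id}),
\end{equation*}
obtained from~\eqref{eq:tau-hat} by plugging in the unit on one tensor slot and using~\eqref{Hr6}. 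The base cases $m=0$ or $n=0$ follow from~\eqref{Hr6} directly. For the outer inductive step with $m\ge 1$, I split $f_E=f_{E'}\cdot a_m$ with $E'=E\setminus\{a_m\}$ and apply the first identity to $f_{E'}\otimes a_m\otimes f_F$, breaking the $(m,n)$-braiding into a $(1,n)$-braiding and an $(m-1,n)$-braiding whose scalars $(-p)^n$ and $(-p)^{(m-1)n}$ multiply to $(-p)^{mn}$. The $(1,n)$-case uses the second identity analogously, splitting off $b_1=\min F$ and reducing to the $(1,n-1)$-case together with the $(1,1)$-case, where $\hat\tau(a\otimes b)=\tau(a\otimes b)=-p\,b\otimes a$ directly from~\eqref{eq:sl-braiding}.

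Because every elementary crossing encountered is of the clean form $a<b$, no branching occurs and each step produces a single tensor-product term; the $mn$ crossings assemble exactly into the permutation carrying $a_1\cdots a_m\otimes b_1\cdots b_n$ to $b_1\cdots b_n\otimes a_1\cdots a_m$, with total scalar $(-p)^{|E||F|}$. The main obstacle is purely bookkeeping: verifying that the two specialised fusion identities above do follow from~\eqref{eq:tau-hat} and that the inductive reassembly respects the multiplication, in particular that $f_{E'}\cdot a_m$ and $b_1\cdot f_{F'}$ recover $f_E$ and $f_F$ in the set-theoretic basis. Both points are immediate from $E'<\{a_m\}$ and $\{b_1\}<F\setminus\{b_1\}$, so once the fusion machinery is in place the result falls out essentially by counting crossings.
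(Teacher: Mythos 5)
Your proposal is correct and follows essentially the same route as the paper: the paper's proof also invokes the fusion relation~\eqref{eq:tau-hat}, \eqref{eq:tau-hat-graph} to reduce $\hat\tau f_{E,F}$ to $|E||F|$ elementary crossings $\tau(a\otimes b)$ with $a\in E$, $b\in F$, $a<b$, each contributing $-p\,b\otimes a$ by~\eqref{eq:sl-braiding}. Your double induction via the specialised identities $\hat\tau(\nabla\otimes\operatorname{id})=(\operatorname{id}\otimes\nabla)(\hat\tau\otimes\operatorname{id})(\operatorname{id}\otimes\hat\tau)$ and $\hat\tau(\operatorname{id}\otimes\nabla)=(\nabla\otimes\operatorname{id})(\operatorname{id}\otimes\hat\tau)(\hat\tau\otimes\operatorname{id})$ (the paper's relations~\eqref{eq:braiding-product}) is simply a careful spelling-out of the bookkeeping the paper leaves implicit.
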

\begin{proof}
 This is an easy consequence of the fusion formula~\eqref{eq:tau-hat}, \eqref{eq:tau-hat-graph}, where any elementary braiding $\tau$ is applied to a basis element $a\otimes b$ with $a\in E$ and $b\in F$, so that $a<b$, and definition~\eqref{eq:sl-braiding} implies that
 \begin{equation}
\hat\tau f_{\{a\},\{b\}}=-p f_{\{b\},\{a\}}.
\end{equation}
\end{proof}
\begin{theorem}\label{thm:1}
 Let $\B$ be a linearly ordered basis of a vector space $V$, where $p\in\lcf_{\ne0}$ a nonzero scalar, and let
 $\{f_E \mid E\in \fP(\B)\}$ be the (canonical) basis of $\bigwedge V$ given by words in the alphabet $\B$ with strictly increasing order. Then, the braided Hopf algebra  $\Lambda_p(V)$ has the following structure maps:

 the product
\begin{equation}\label{eq:prod-comb}
 \nabla(f_E\otimes f_F)=:f_Ef_F=\delta_{|E\cap F|,0}(-1)^{\theta_{E,F}}f_{E\cup F},\quad \forall E,F\in\fP(\B);
\end{equation}

 the coproduct
 \begin{equation}\label{eq:comult-comb}
 \Delta f_E=\sum_{A\subseteq E}(-p)^{\theta_{A,E\setminus A}}f_{A}\otimes f_{E\setminus A},\quad \forall E\in\fP(\B);
\end{equation}

 the antipode
 \begin{equation}\label{eq:antipode-comp}
 Sf_E=\gauss{|E|}f_E,\quad \forall E\in\fP(\B),
\end{equation}
 where the integers $\theta_{E,F}\in\Z_{\ge0}$ are defined in~\eqref{eq:integer-mAB}, and the signed Gaussian exponential in~\eqref{eq:gauss}.
 \end{theorem}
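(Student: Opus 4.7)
The plan is to establish the three formulas by induction on $|E|$, writing $f_E = f_{\{a\}} f_{E'}$ with $a := \min(E)$ and $E' := E \setminus \{a\}$, and using Lemma~\ref{lem:E<F} to compute the braiding $\hat\tau(f_{\{a\}} \otimes f_B) = (-p)^{|B|} f_B \otimes f_{\{a\}}$ whenever $a < B$. The product formula~\eqref{eq:prod-comb} is essentially a restatement of the presentation~\eqref{q-def}: if $E \cap F \ne \emptyset$, the concatenated word contains a repeated letter and vanishes by $a^2 = 0$; otherwise, sorting into strictly increasing order via the relations $ba = -ab$ produces a sign $(-1)^N$, where $N$ is the number of adjacent transpositions needed, equal to the number of inversions, i.e.\ $\theta_{E,F}$.

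For the coproduct formula~\eqref{eq:comult-comb}, the base cases $|E| \le 1$ are immediate from primitivity. For the inductive step, apply the compatibility axiom~\eqref{Hr7} to $f_E = f_{\{a\}} f_{E'}$, substitute $\Delta f_{\{a\}} = f_{\{a\}} \otimes 1 + 1 \otimes f_{\{a\}}$ together with the inductive expansion of $\Delta f_{E'}$ indexed by $B \subseteq E'$, and split by summand. The $f_{\{a\}} \otimes 1$ part passes through the braiding trivially and contributes terms indexed by $A = \{a\} \cup B$, whose coefficient $(-p)^{\theta_{B, E' \setminus B}}$ already equals the target $(-p)^{\theta_{A, E \setminus A}}$ since $\theta_{\{a\}, E' \setminus B} = 0$. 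The $1 \otimes f_{\{a\}}$ part uses Lemma~\ref{lem:E<F} to produce an additional factor $(-p)^{|B|}$, and contributes terms indexed by $A = B \not\ni a$; the identity
\be
\theta_{A, E \setminus A} = \theta_{A, E' \setminus A} + |A| \qquad (a \notin A),
\ee
valid because $a < A$ forces $\theta_{A,\{a\}} = |A|$, shows that this extra factor is precisely what is required.

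For the antipode formula~\eqref{eq:antipode-comp}, the base case $Sf_{\{a\}} = -f_{\{a\}} = \gauss{1} f_{\{a\}}$ comes from primitivity. For the inductive step, apply the braided anti-multiplicativity $S\nabla = \nabla \hat\tau (S \otimes S)$ from~\eqref{eq:antipode-braiding} to $f_E = f_{\{a\}} f_{E'}$, use the inductive hypothesis $Sf_{E'} = \gauss{|E'|} f_{E'}$, Lemma~\ref{lem:E<F}, and the product formula $f_{E'} f_{\{a\}} = (-1)^{|E'|} f_E$ (since $\theta_{E', \{a\}} = |E'|$). This reduces the claim to the recursion $\gauss{k} = -p^{k-1} \gauss{k-1}$, which is immediate from the definition $\gauss{k} = (-1)^k p^{k(k-1)/2}$.

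The main obstacle is the sign-and-exponent bookkeeping in the coproduct step: one must partition the inductive sum according to whether $a \in A$, invoke Lemma~\ref{lem:E<F} only in the branch where the ordering applies, and verify that the braiding factor $(-p)^{|B|}$ combines with the inductive factor $(-p)^{\theta_{B, E' \setminus B}}$ to yield the target exponent $(-p)^{\theta_{A, E \setminus A}}$ in both branches. The product and antipode steps are then essentially one-line verifications.
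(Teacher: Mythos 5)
Your proposal is correct and follows essentially the same route as the paper's proof: induction on $|E|$ peeling off an extremal element (you use $\min(E)$, the paper uses $\max(E)$), with Lemma~\ref{lem:E<F} supplying the braiding factor, the compatibility axiom for the coproduct step with the same two-branch exponent bookkeeping (in mirror image), and the braided anti-multiplicativity $S\nabla=\nabla\hat\tau(S\otimes S)$ reducing the antipode to the recursion for $\gauss{k}$. The only cosmetic difference is that you justify the product formula directly by inversion counting from the presentation~\eqref{q-def}, whereas the paper runs a short induction; both are fine.
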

\begin{proof}
We prove~\eqref{eq:prod-comb}.

If $|E\cap F|\ne 0$, then $E\cap F\ne\emptyset$, and for any $b\in E\cap F$, we have
\be
f_Ef_F=\pm f_{E\setminus\{b\}}f_{F\setminus\{b\}}f_{\{b\}}^2=0.
\ee
From now on, we assume that $E\cap F=\emptyset$, and we proceed by induction on $|E|$.

 If $|E|=0$, then $E=\emptyset$, and the equality~\eqref{eq:prod-comb} is trivially satisfied.

Let $|E|=1$. There exists a unique decomposition $F=G\sqcup H$  such that
$
G<E<H,
$
and thus
\be
 f_Ef_F= f_Ef_G f_H=(-1)^{|G|}f_G f_E f_H=(-1)^{|G|}f_{E\cup F}.
\ee
On the other hand, if $E=\{e\}$, we have the following set-theoretic description:
\be
G=\{g\in F\mid g<e\}
\ee
which implies that $|G|=\theta_{E,F}$, and thus formula~\eqref{eq:prod-comb} holds for $|E|\le 1$.

 Let us assume that the equality~\eqref{eq:prod-comb} holds if $|E|\le k$, where $k\ge1$. Assume that $|E|=k+1$. Denoting $a:=\max(E)$, and $E':=E\setminus\{a\}$, we have
 $|E'|=k$, so that
\begin{align}
  f_Ef_F=f_{E'}f_{\{a\}}f_F&=(-1)^{\theta_{\{a\},F}}f_{E'}f_{\{a\}\cup F}
  =(-1)^{\theta_{\{a\},F}+\theta_{E',\{a\}\cup F}}f_{ E'\cup \{a\}\cup F}\\
    &=(-1)^{\theta_{\{a\},F}+\theta_{E',\{a\}\cup F}}f_{E\cup F}=
  (-1)^{\theta_{E,F}}f_{E\cup F}\nonumber
\end{align}
where, in the last equality, we used the identity
\be
\theta_{E, F}=\theta_{E\setminus\{a\},\{a\}\cup F}+\theta_{\{a\}, F},
\ee
which follows from the equality $\theta_{E\setminus\{a\},\{a\}}=0$ (recall that $a=\max(E)$).

 We prove~\eqref{eq:comult-comb} by induction on $|E|$. The formula is obviously true for $|E|=0$. Assume that it is true for all subsets of $\B$ of cardinality less than or equal to $k\ge0$. Assuming that $|E|=k+1$, denote $a:=\max(E)$ and $F:=E\setminus \{a\}$. As $|F|=k$, by the induction hypothesis, we have
 \be
  \Delta f_F=\sum_{A\subseteq F}(-p)^{\theta_{A,F\setminus A}}f_{A,F\setminus A}
 \ee
 and $f_E=f_Ff_{\{a\}}$. Using the primitivity of $f_{\{a\}}$ and Lemma~\ref{lem:E<F}, we write
\begin{align}
 \Delta f_E&=(\Delta f_F)(f_{\{\},\{a\}}+f_{\{a\},\{\}})\\
 &=\sum_{A\subseteq F}(-p)^{\theta_{A,F\setminus A}}f_{A,(F\cup\{a\})\setminus A}+\sum_{A\subseteq F}(-p)^{\theta_{A,F\setminus A}+|F|-|A|}f_{A\cup\{a\},F\setminus A}\nonumber\\
 &=\sum_{A\subseteq E\setminus\{a\}}(-p)^{\theta_{A,F\setminus A}} f_{A,E\setminus A}
 +\sum_{A\cup\{a\}\subseteq E}(-p)^{\theta_{A,F\setminus A}+|F|-|A|}
 f_{A\cup\{a\},E\setminus (A\cup\{a\})}\nonumber\\
  &=\sum_{A\subseteq E,\ a\not\in A}(-p)^{\theta_{A,F\setminus A}}f_{A,E\setminus A}
 +\sum_{A'\subseteq E,\  a\in A'}
 (-p)^{\theta_{A'\setminus\{a\},E\setminus A'}+|E|-|A'|} f_{A',E\setminus A'}.\nonumber
\end{align}
 Now, we note the identities
 \be
\theta_{A,F\setminus A}=\theta_{A,(F\cup\{a\})\setminus A}=\theta_{A,E\setminus A},\quad \forall A\subseteq F,
 \ee
 and

 \be
 \theta_{A'\setminus\{a\},E\setminus A'}+|E|-|A'|=\theta_{A',E\setminus A'},\quad A'=A\cup\{a\},\quad  \forall A\subseteq F,
 \ee
and formula~\eqref{eq:comult-comb} follows.

Now, we prove~\eqref{eq:antipode-comp}, again by induction on $|E|$. The formula is obviously true for $|E|=0$.
Assume that it is true for all subsets of $\B$ of cardinality less than or equal to $k\ge0$, and assume that $|E|=k+1$. As before, denote $a:=\max(E)$ and $F:=E\setminus \{a\}$. As $|F|=k$, by the induction hypothesis, we have
\be
Sf_F=\gauss{k}f_F.
\ee
By using the above formula, the equality
\be
S(xy)=\nabla\hat\tau((Sx)\otimes (Sy)),\quad \forall x,y\in\Lambda_p(V),
\ee
corresponding to the graphical equation~\eqref{eq:antipode-braiding}, the equality
$Sf_{\{a\}}=-f_{\{a\}}$, and Lemma~\ref{lem:E<F}, we write
\begin{align}
 Sf_E&=S(f_Ff_{\{a\}})=\nabla\hat\tau((Sf_F)\otimes (Sf_{\{a\}}))=
 -\gauss{k}\nabla \hat\tau f_{F,\{a\}}\\
 &=-\gauss{k}(-p)^{k}\nabla f_{\{a\},F}
 =-\gauss{k}(-p)^{k}f_{\{a\}}f_F
 =-\gauss{k}p^{k}f_Ff_{\{a\}}=\gauss{k+1}f_E=\gauss{|E|}f_E.\nonumber
\end{align}
\end{proof}
\begin{remark}
 The unit of $\Lambda_p(V)$ is $\eta 1=f_\emptyset$ and the counit $\epsilon f_E=\delta_{0,|E|}$.
\end{remark}
\begin{remark}
In the context of $q$-deformed exterior algebras in the representation theory of $U_q(\mathfrak{sl}_n)$ and MOY diagrams~\cite{MR1659228}, the coproduct~\eqref{eq:comult-comb} has already been considered in~\cite[Lemma~3.1.2]{MR3263166}\footnote{
We thank Francis Bonahon and L\`eo Schelstraete for pointing out these references, following the first author's talk on May 13, 2025, at the ``Hot Topics'' conference on Quantum Topology (Bonn, May 12–16, 2025).}.
\end{remark}
\begin{theorem}\label{thm:Lambda-p-Nichols}
 The exterior algebra $\Lambda_p(V)$ is isomorphic to the Nichols algebra $\mathfrak{B}(V)=T(V)/\mathfrak{J}_V$ associated with the vector space $V$ with the braiding $\tau$ defined in~\eqref{eq:sl-braiding}.
\end{theorem}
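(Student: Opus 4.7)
The plan is to verify the intrinsic characterisation of the Nichols algebra for $\Lambda_p(V)$. As a first step, I would observe that the ideal $\mathfrak{J}_2$ constructed in the proof of Theorem~\ref{thm:TV/J2} is a graded Hopf ideal concentrated in degrees $\ge 2$, so in particular $\mathfrak{J}_2\cap V=0$. By the maximality of $\mathfrak{J}_V$ among graded Hopf ideals trivially intersecting $V$, this yields the inclusion $\mathfrak{J}_2\subseteq\mathfrak{J}_V$ and hence a canonical surjective morphism of braided Hopf algebras
\[
\pi\colon \Lambda_p(V)=T(V)/\mathfrak{J}_2\twoheadrightarrow T(V)/\mathfrak{J}_V=\mathfrak{B}(V).
\]

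The strategy is then to promote $\pi$ to an isomorphism by invoking the standard characterisation of $\mathfrak{B}(V)$: it is the unique connected graded braided Hopf algebra $R=\bigoplus_{n\ge 0}R_n$ with $R_0=\lcf$, $R_1=V$, generated by $R_1$ as an algebra, and whose space of primitive elements coincides with $R_1$. The first four properties are immediate for $\Lambda_p(V)$ because $\mathfrak{J}_2$ is homogeneous of degree $\ge 2$. The crux of the argument is thus to verify that every primitive element of $\Lambda_p(V)$ already lies in $V$.

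For this remaining step I would take a homogeneous primitive $x=\sum_{|E|=k}c_E f_E$ of degree $k\ge 2$ and plug into the explicit coproduct formula~\eqref{eq:comult-comb}. The coefficient of $f_A\otimes f_B$ in $\Delta x$ for disjoint nonempty $A,B$ picks out only the term with $E=A\sqcup B$, yielding $c_{A\sqcup B}(-p)^{\theta_{A,B}}$. The primitivity condition $\Delta x=x\otimes 1+1\otimes x$ forces this expression to vanish for every nontrivial splitting, and since $p\neq 0$ and every $E$ of cardinality $\ge 2$ admits such a splitting (for instance $A=\{\min(E)\}$), one concludes $c_E=0$ for all $|E|\ge 2$. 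Hence every primitive of $\Lambda_p(V)$ lies in $V$, and the reverse inclusion is automatic.

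I do not expect a serious technical obstacle: once Theorem~\ref{thm:1} is in hand, the identification of the primitives is a single coefficient comparison in the set-theoretic basis, and the remainder of the argument is formal. The only delicate point is the appeal to the characterisation of Nichols algebras in the fully braided setting; alternatively, one can phrase the concluding step directly by noting that $\ker\pi$ is a graded Hopf ideal of $\Lambda_p(V)$ whose lowest-degree nonzero component would consist of primitive elements of degree $\ge 2$, contradicting the computation above and thus forcing $\ker\pi=0$.
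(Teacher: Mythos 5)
Your proposal is correct and follows essentially the same route as the paper: both reduce the statement to showing that $\Lambda_p(V)$ has no nontrivial primitive elements in degree $\ge 2$, and both establish this by the same observation on the explicit coproduct formula~\eqref{eq:comult-comb}, namely that the mixed terms $f_A\otimes f_{E\setminus A}$ occur only in $\Delta f_E$ and with nonzero coefficient $(-p)^{\theta_{A,E\setminus A}}$, so they cannot cancel in any linear combination. Your extra scaffolding (the surjection $\pi$ from maximality of $\mathfrak{J}_V$ and the graded-Hopf-ideal/lowest-degree argument) just spells out the reduction that the paper states in one sentence, so there is no substantive difference.
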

\begin{proof}
 It suffices to prove that there are no nontrivial primitive elements in  $\Lambda_p(V)$  of degrees greater than $1$. Indeed, in this case, the ideal $\mathfrak{J}_2\subset T(V)$ generated  by primitive elements of degree $2$ coincides with the maximal Hopf ideal $\mathfrak{J}_V$, which intersects $V\subset T(V)$ trivially.

 Formula~\eqref{eq:comult-comb} for the coproduct $\Delta f_E$ with $|E|\ge2$ implies that the nonprimitive contribution from the basis element $f_{A,E\setminus A}$ of $\Lambda_p(V)^{\otimes 2}$, where $A\neq\emptyset$ and $A\ne E$, does not  appear in the coproduct $\Delta f_F$ for any $F\neq E$. This means that such a term can never be cancelled by taking linear combinations of basis elements.
\end{proof}
\begin{remark}
 When $\dim(V)=2$, the braided Hopf algebra $\Lambda_p(V)$ is contained in the classification list of \cite{MR3775317}. Specifically, braiding~\eqref{eq:sl-braiding}, with $\dim(V) = 2$, fits into the braiding~\cite[$\mathfrak{R}_{2,1}$]{MR3775317}  with the substitutions $k \mapsto \sqrt{-1}$, $p \mapsto \sqrt{-1}p$, $q \mapsto \sqrt{-1}$. In this case, Theorem~\ref{thm:TV/J2}, in conjunction with  Theorem~\ref{thm:Lambda-p-Nichols}, overlaps with \cite[Proposition 3.3]{MR3775317}.
\end{remark}
\subsection{Elements of the MOY calculus}
Here, following previous works~\cite{MR1659228,MR3263166}, we introduce parts of the MOY calculus.
\begin{lemma}\label{lem:qbin}
 For any $E\in\binom{\B}{n}$, the following identity holds:
\be
\qbinom{n}{k}{p}= \sum_{A\in\binom{E}{k}}p^{\theta_{A,E\setminus A}}.\label{Lemma2}
\ee
\end{lemma}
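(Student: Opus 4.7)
\smallskip

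\noindent\textbf{Proof plan.} The plan is to prove the identity by induction on $n=|E|$, isolating the role of the maximum element $m:=\max(E)$ in the summand labels. The base case $n=0$ is immediate: both sides equal $1$ (and more generally, the extreme cases $k=0$ and $k=n$ give a single summand with exponent $0$, matching $\qbinom{n}{0}{p}=\qbinom{n}{n}{p}=1$).

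For the inductive step, assume the identity holds for all sets of cardinality $n-1$, and split the sum on the right according to whether $m\in A$ or $m\notin A$. Writing $E':=E\setminus\{m\}$:
\begin{itemize}
 \item If $m\notin A$, then $A\subseteq E'$ and $E\setminus A=E'\setminus A\cup\{m\}$. Since every element of $A$ is strictly smaller than $m$, $\theta_{A,\{m\}}=0$, hence $\theta_{A,E\setminus A}=\theta_{A,E'\setminus A}$.
 \item If $m\in A$, write $A=A'\sqcup\{m\}$ with $A'\in\binom{E'}{k-1}$. Then $E\setminus A=E'\setminus A'$, and since every element of $E'\setminus A'$ is strictly smaller than $m$, $\theta_{\{m\},E\setminus A}=|E\setminus A|=n-k$, giving $\theta_{A,E\setminus A}=\theta_{A',E'\setminus A'}+(n-k)$.
\end{itemize}
Summing these contributions and applying the inductive hypothesis to the two sums over subsets of $E'$ (of sizes $k$ and $k-1$, respectively) yields
\begin{equation*}
\sum_{A\in\binom{E}{k}}p^{\theta_{A,E\setminus A}}=\qbinom{n-1}{k}{p}+p^{n-k}\qbinom{n-1}{k-1}{p}.
\end{equation*}

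The proof then concludes by invoking the standard Pascal-type recurrence for $p$-binomials, $\qbinom{n}{k}{p}=\qbinom{n-1}{k}{p}+p^{n-k}\qbinom{n-1}{k-1}{p}$, which follows routinely from the definition $\qbinom{n}{k}{p}=(p)_n/((p)_k(p)_{n-k})$. The only delicate point is tracking the interaction between $m$ and the Heaviside bookkeeping; here the key observation is precisely that $m$ is maximal, so $\theta_{\{m\},\cdot}$ over any subset of $E'$ simply counts that subset. Nothing harder is involved, and the identity depends only on the linear order of $E$, not on its embedding in $\B$.
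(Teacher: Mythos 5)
Your proof is correct, and it takes a genuinely different route from the paper. The paper follows the MOY argument (their Lemma A.1): it inducts on $k$, first computing the case $k=1$ to get $[n]_p$, then using the identity $\theta_{A,B}+\theta_{A\sqcup B,C}=\theta_{A,B\sqcup C}+\theta_{B,C}$ together with the bijection $(A,B)\leftrightarrow(A',B)$, $A'=A\setminus B$, to extract a factor $[n-k+1]_p/[k]_p$ and invoke the inductive hypothesis at $k-1$. You instead induct on $n=|E|$, split the sum at $m=\max(E)$, and reduce to the Pascal-type recurrence $\qbinom{n}{k}{p}=\qbinom{n-1}{k}{p}+p^{n-k}\qbinom{n-1}{k-1}{p}$; the two theta computations ($\theta_{A,\{m\}}=0$ when $m\notin A$, and $\theta_{\{m\},E'\setminus A'}=n-k$ when $m\in A$) are exactly right. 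Your version is more elementary and self-contained, needing only the standard Gaussian-binomial recurrence, whereas the paper's version is structured so that each step has a direct MOY-diagrammatic meaning (coassociativity and bubble removal), which is why the authors phrase it that way. One small bookkeeping point: in the inductive step the boundary values $k=0$ and $k=n$ require the convention $\qbinom{n-1}{-1}{p}=\qbinom{n-1}{n}{p}=0$ (or the separate direct check you give for the extreme cases), but this is routine and does not affect correctness.
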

\begin{proof} The proof follows the same line of reasoning as in the proof of~\cite[Lemma A.1]{MR1659228}.

First, we remark that for any three finite subsets $A$, $B$, and $C$ of a linearly ordered set, such that $B$ is disjoint from both $A$ and $C$, we have the obvious identity
\be
\theta_{A,B}+\theta_{A\sqcup B,C}=\theta_{A,B\sqcup C}+\theta_{B,C}\label{coassoc},
\ee
which corresponds, in particular, to the coassociativity of the coproduct.

Next, for $k=1$ and any $A\in\binom{E}{1}$, we have  $\theta_{E\setminus A,A}=n-i$ and $\theta_{A,E\setminus A}=i-1$, if $A$ is given by the $i$-th smallest element of $E$. Then
\be
 \sum_{A\in\binom{E}{1}}p^{\theta_{E\setminus A,A}}= \sum_{A\in\binom{E}{1}}p^{\theta_{A,E\setminus A}}=\sum_{i=1}^n p^{i-1}=[n]_p=\qbinom{n}{1}{p}.\label{Lemma21}
\ee

Now, we proceed by induction on $k$ by assuming that~\eqref{Lemma2} holds for $k-1\ge 1$. Inserting \eqref{Lemma21} into  the right-hand side of \eqref{Lemma2}, we write for the latter
\be\label{eq:st-ass}
 \sum_{A\in\binom{E}{k}}p^{\theta_{A,E\setminus A}}=\frac{1}{[k]_p}
  \sum_{A\in\binom{E}{k}}\sum_{B\in\binom{A}{1}}p^{\theta_{A\setminus B,B}+\theta_{A,E\setminus A}}=
\frac{1}{[k]_p} \sum_{A\in\binom{E}{k}}\sum_{B\in\binom{A}{1}}p^{\theta_{A\setminus B,E\setminus (A\setminus B)}+\theta_{B,E\setminus A}},
\ee
where, in the second equality, we use \eqref{coassoc}.
Denoting $A'= A\setminus B$, $|A'|=k-1$, and using the set-theoretic bijection
\be
\Big\{(A,B)\mid A\in\binom{E}{k},\ B\in\binom{A}{1}\Big\}\simeq \Big\{(A',B)\mid A'\in\binom{E}{k-1},\ B\in\binom{E\setminus A'}{1}\Big\},
\ee
we transform the last expression of~\eqref{eq:st-ass} into
\begin{align}
\frac{1}{[k]_p}& \sum_{A'\in\binom{E}{k-1}}\sum_{B\in\binom{E\setminus A'}{1}}
p^{\theta_{A',E\setminus A'}+\theta_{B,(E\setminus A')\setminus B}}=
\frac{[n-k+1]_p}{[k]_p} \sum_{{A'\in\binom{E}{k-1}}}
p^{\theta_{A',E\setminus A'}}\\
&=\frac{[n-k+1]_p}{[k]_p}\qbinom{n}{k-1}{p}=
\frac{[n-k+1]_p}{[k]_p}\frac{[n]_p^!}{[k-1]_p^![n-k+1]_p^!}=
\frac{[n]_p^!}{[k]_p^![n-k]_p^!}=\qbinom{n}{k}{p}.\nonumber
\end{align}
Here, in the first equality, we again use~\eqref{Lemma21} to eliminate the summation over $B$, and in the second equality, we use the induction hypothesis.
\end{proof}

\begin{lemma}\label{lem:buble} For any $k,n\in\Z_{\ge0}$, $k\le n$, we have the relations
 \begin{equation}\label{eq:buble}
 \nabla \Delta_{k)} \pi_n=\qbinom{n}{k}{p} \pi_n=
 \nabla  \Delta_{(k} \pi_n,
\end{equation}
which correspond to the following MOY diagrammatic identities:
\begin{equation}\label{eq:bubleMOY}
\begin{tikzpicture}[xscale=1.5, yscale=.9,baseline=35]
 \coordinate (n) at (0,2.5);
  \coordinate (pr) at (0,2);
   \coordinate (cpr) at (0,1);
    \coordinate (s) at (0,.5);
          \draw [thick,mid arrow](s)--(cpr) node[midway,right] {\tiny $n$} ;
     \draw[thick,mid arrow] (pr)--(n) node[midway,right] {
     } ;
     \draw[thick,mid arrow] (cpr) to [out=135, in=-135]  node[midway,left] {\tiny $k$} (pr) ;
      \draw[thick,mid arrow] (cpr) to [out=45, in=-45] (pr) ;
\end{tikzpicture}
=\qbinom{n}{k}{p}\>\>\begin{tikzpicture}[xscale=1.5, yscale=.9,baseline=35]
 \coordinate (n) at (0,2.5);
    \coordinate (s) at (0,.5);
          \draw [thick,mid arrow](s)--(n) node[midway,right] {\tiny $n$} ;
          \end{tikzpicture}
          =
          \begin{tikzpicture}[xscale=1.5, yscale=.9,baseline=35]
 \coordinate (n) at (0,2.5);
  \coordinate (pr) at (0,2);
   \coordinate (cpr) at (0,1);
    \coordinate (s) at (0,.5);
          \draw [thick,thick,mid arrow](s)--(cpr) node[midway,right] {\tiny $n$} ;
     \draw[thick,mid arrow] (pr)--(n) node[midway,right] {
     } ;
     \draw[thick,mid arrow] (cpr) to [out=135, in=-135] (pr) ;
      \draw[thick,mid arrow] (cpr) to [out=45, in=-45]  node[midway,right] {\tiny $k$} (pr) ;
\end{tikzpicture}.
    \end{equation}

\end{lemma}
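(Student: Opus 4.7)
The plan is to prove both equalities by evaluating each composition on the basis element $f_E$ with $|E|=n$ and reducing the computation to the identity of Lemma~\ref{lem:qbin}. Since both sides of \eqref{eq:buble} involve $\pi_n$, it suffices to treat an arbitrary $E\in\binom{\B}{n}$.

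For the first equality, I would start from the coproduct formula~\eqref{eq:comult-comb} and apply $\pi_k\otimes\operatorname{id}$ to keep only the terms whose left tensor factor has degree $k$:
\begin{equation*}
\Delta_{k)}f_E=\sum_{A\in\binom{E}{k}}(-p)^{\theta_{A,E\setminus A}}\,f_{A}\otimes f_{E\setminus A}.
\end{equation*}
Since $A$ and $E\setminus A$ are disjoint, the product formula~\eqref{eq:prod-comb} applies with no vanishing term and yields $f_A f_{E\setminus A}=(-1)^{\theta_{A,E\setminus A}}f_E$. Multiplying this sign with $(-p)^{\theta_{A,E\setminus A}}$ turns the alternating factor into $p^{\theta_{A,E\setminus A}}$, so
\begin{equation*}
\nabla\Delta_{k)}f_E=\Bigl(\sum_{A\in\binom{E}{k}}p^{\theta_{A,E\setminus A}}\Bigr)f_E,
\end{equation*}
and the parenthesised sum equals $\qbinom{n}{k}{p}$ by Lemma~\ref{lem:qbin}.

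For the second equality I would proceed in the same way, but the projection $\operatorname{id}\otimes\pi_k$ now forces $|E\setminus A|=k$, i.e.\ the complement has cardinality $k$. Reindexing the sum by $B=E\setminus A$ gives
\begin{equation*}
\nabla\Delta_{(k}f_E=\Bigl(\sum_{B\in\binom{E}{k}}p^{\theta_{E\setminus B,B}}\Bigr)f_E,
\end{equation*}
and one now needs the partner identity $\sum_{B\in\binom{E}{k}}p^{\theta_{E\setminus B,B}}=\qbinom{n}{k}{p}$. This can be obtained either by a second application of Lemma~\ref{lem:qbin} after reindexing $B\mapsto E\setminus B$ (which converts the exponent $\theta_{E\setminus B,B}$ into $\theta_{A,E\setminus A}$ with $|A|=n-k$) together with the symmetry $\qbinom{n}{n-k}{p}=\qbinom{n}{k}{p}$, or by using Remark~\ref{rem:mAB+mBA}, which gives $\theta_{A,E\setminus A}+\theta_{E\setminus A,A}=k(n-k)$, to relate the two sums directly.

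I do not expect any genuine obstacle: once Lemma~\ref{lem:qbin} is available, the argument is a direct bookkeeping exercise showing that the combinatorial sign $(-1)^{\theta_{A,E\setminus A}}$ produced by the product exactly cancels the sign inside $(-p)^{\theta_{A,E\setminus A}}$ coming from the coproduct. The mildly delicate point is simply choosing the right reindexing for $\Delta_{(k}$ so that Lemma~\ref{lem:qbin} applies in its stated form rather than in a mirrored version.
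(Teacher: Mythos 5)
Your proof is correct and follows essentially the same route as the paper: expand with the coproduct formula~\eqref{eq:comult-comb}, note the projection selects the $k$-element (resp.\ complementary) subsets, cancel the sign from~\eqref{eq:prod-comb} against that in $(-p)^{\theta_{A,E\setminus A}}$, and invoke Lemma~\ref{lem:qbin}. The paper simply states that the second equality is proved similarly, and your reindexing via $B=E\setminus A$ together with $\qbinom{n}{n-k}{p}=\qbinom{n}{k}{p}$ is exactly the intended bookkeeping.
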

\begin{proof}
Let us prove the first equality in~\eqref{eq:buble}. For any $E\in \binom{\B}{n}$, we have
\begin{align}
 \nabla \Delta_{k)} \pi_nf_E&=\nabla \Delta_{k)}f_E=\sum_{A\subseteq E} (-p)^{\theta_{A,E\setminus A}}(\pi_k f_A)f_{E\setminus A}\\
 &=\sum_{A\in \binom{E}{k}} (-p)^{\theta_{A,E\setminus A}}f_Af_{E\setminus A}
 =\sum_{A\in \binom{E}{k}} p^{\theta_{A,E\setminus A}}f_{E}=
 \qbinom{n}{k}{p}f_{E}=\qbinom{n}{k}{p}\pi_nf_{E},\nonumber
\end{align}
where, in the second-to-last equality, we use Lemma~\ref{lem:qbin}.

The proof of the second equality in~\eqref{eq:buble} is similar.
\end{proof}
\begin{remark}
Lemma~\ref{lem:buble} is analogous to~\cite[Lemma 2.2 and Lemma A.1]{MR1659228} and \cite[(2.4)]{MR3263166}.
\end{remark}
\section{A \texorpdfstring{$U_q(\mathfrak{gl}_2)$-module structure of $\Lambda_{p}(V)^{\otimes 2}$}{Lg}}\label{sec:5}
In this section, we continue to follow the works~\cite{MR1659228,MR3263166}  to adapt additional elements of the MOY calculus to our context.

We define four elements in the algebra  $\operatorname{End}(\Lambda_p(V)^{\otimes 2})$ as follows:
two invertible elements
\begin{equation}\label{eq:Tiuqsl2}
T_1=\phi_p\otimes\operatorname{id}_{\Lambda_p(V)},\quad T_2=\operatorname{id}_{\Lambda_p(V)}\otimes\phi_p,
\end{equation}
where $\phi_p\in\operatorname{Aut}(\Lambda_p(V))$ is the diagonal automorphism given by
\begin{equation}\label{eq:diag-aut-t=p}
 \phi_p=\sum_{n\ge0} p^n\pi_n;
\end{equation}
and two additional endomorphisms:
\begin{equation}\label{eq:Luqsl2}
L=(\nabla\otimes\operatorname{id}_{\Lambda_p(V)})(\operatorname{id}_{\Lambda_p(V)}\otimes \Delta_{1)}),
\end{equation}
\begin{equation}\label{eq:Ruqsl2}
R=(\operatorname{id}_{\Lambda_p(V)}\otimes\nabla)(\Delta_{(1}\otimes \operatorname{id}_{\Lambda_p(V)}),
\end{equation}
with the notation from~\eqref{eq:proj-copr}.

Using the MOY diagrammatic calculus, we have the following graphical interpretation:

\be
{ L}=\ \begin{tikzpicture}[xscale=1,yscale=0.4,baseline=13]
 \coordinate (nw) at (0,3);
  \coordinate (ne) at (1,3);
   \coordinate (sw) at (0,0);
    \coordinate (se) at (1,0);
     \coordinate (w) at (0,2);
      \coordinate (e) at (1,1);
      \draw [mid arrow, line width=0.8pt](w)--(nw);
     \draw[mid arrow, line width=0.8pt] (e)--(ne);
     \draw[mid arrow, line width=0.8pt] (e)--(w) node[midway, above] {\tiny  $1$};
      \draw[mid arrow, line width=0.8pt] (se)--(e);
      \draw[mid arrow, line width=0.8pt] (sw)--(w);
\end{tikzpicture}\ ,\quad
R=\ \begin{tikzpicture}[xscale=1,yscale=0.4,baseline=13]
 \coordinate (nw) at (0,3);
  \coordinate (ne) at (1,3);
   \coordinate (sw) at (0,0);
    \coordinate (se) at (1,0);
     \coordinate (w) at (0,1);
      \coordinate (e) at (1,2);
      \draw [mid arrow, line width=0.8pt](w)--(nw);
     \draw[mid arrow, line width=0.8pt] (e)--(ne);
     \draw[mid arrow, line width=0.8pt] (w)--(e) node[midway, above] {\tiny  $1$};
      \draw[mid arrow, line width=0.8pt] (se)--(e);
      \draw[mid arrow, line width=0.8pt] (sw)--(w);
\end{tikzpicture}\ .
\ee

\begin{proposition}
 The operators $T_1$, $T_2$, $L$, and $R$,  defined in~\eqref{eq:Tiuqsl2}--\eqref{eq:Ruqsl2}, satisfy the relations
 \begin{equation}\label{eq:xypyx}
 XY =pYX,\quad (X,Y)\in \big\{(T_1,L),(L,T_2),(T_2,R),(R,T_1)\big\},
 \end{equation}
 \begin{equation}\label{eq:t1t2t2t1}
 T_1T_2=T_2T_1,
 \end{equation}
 and
 \begin{equation}\label{eq:rllr}
 RL-LR=\frac{T_1-T_2}{1-p}.
 \end{equation}
\end{proposition}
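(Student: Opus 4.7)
The plan is to dispatch the easy relations first and then concentrate on the commutator relation \eqref{eq:rllr}. The quasi-commutations \eqref{eq:xypyx} and the commutativity \eqref{eq:t1t2t2t1} reduce to degree bookkeeping, while \eqref{eq:rllr} requires a direct computation on the set-theoretic basis using the explicit formulas of Theorem~\ref{thm:1}.

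For \eqref{eq:xypyx}, observe that $T_1,T_2$ act on the bigraded piece $\Lambda_p^m(V)\otimes\Lambda_p^n(V)$ by the scalars $p^m$ and $p^n$ respectively (by definition of $\phi_p$), while $L$ maps this piece into $\Lambda_p^{m+1}(V)\otimes\Lambda_p^{n-1}(V)$ and $R$ into $\Lambda_p^{m-1}(V)\otimes\Lambda_p^{n+1}(V)$ (directly from the definitions \eqref{eq:Luqsl2}, \eqref{eq:Ruqsl2} and the fact that $\nabla$ is graded). Each of the four commutations then follows from comparing $p$-weights before and after; for instance, $T_1L$ applied to the bigraded piece equals $p^{m+1}L$, while $LT_1$ equals $p^mL$, giving $T_1L=pLT_1$, and the other three are analogous. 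Relation \eqref{eq:t1t2t2t1} is trivial since $T_1$ and $T_2$ act on disjoint tensor factors.

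The bulk of the work lies in \eqref{eq:rllr}. The plan is to evaluate both sides on a basis vector $f_E\otimes f_F$. Using \eqref{eq:prod-comb} and \eqref{eq:comult-comb}, $L(f_E\otimes f_F)$ expands as a sum over $a\in F\setminus E$, and applying $R$ to each term produces a sum over $b\in E\cup\{a\}$ with $b\notin F\setminus\{a\}$. Two cases arise: the \emph{diagonal} case $b=a$, which returns to $f_E\otimes f_F$, and the \emph{off-diagonal} case $b\in E\setminus F$, yielding $f_{(E\cup\{a\})\setminus\{b\}}\otimes f_{(F\setminus\{a\})\cup\{b\}}$. The analogous decomposition for $LR$ has off-diagonal pairs $(b,a)$ landing at the same basis element. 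A direct calculation using the additivity $\theta_{X\sqcup Y,Z}=\theta_{X,Z}+\theta_{Y,Z}$ and the antisymmetry $\theta_{a,b}+\theta_{b,a}=1$ for $a\ne b$ (cf.\ Remark~\ref{rem:mAB+mBA}) shows that the off-diagonal coefficients of $RL$ and $LR$ coincide, so these contributions cancel in the difference.

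What remains is to match the diagonal contribution with $(T_1-T_2)/(1-p)$. Factoring out a common $p^{|E\cap F|}$ reduces this to the case $E\cap F=\emptyset$. Listing $G:=E\sqcup F$ in increasing order as $g_1<\dots<g_n$, both diagonal exponents collapse to the same position statistic
\[
c(i):=|\{j<i:g_j\in F\}|+|\{j>i:g_j\in E\}|,
\]
and the required identity becomes $\sum_{i=1}^n\epsilon_ip^{c(i)}=(p^{|E|}-p^{|F|})/(1-p)$, where $\epsilon_i=+1$ if $g_i\in F$ and $\epsilon_i=-1$ if $g_i\in E$. The cleanest step is to observe that this sum is invariant under any adjacent transposition $g_i\leftrightarrow g_{i+1}$ of opposite type: the exponents $c(j)$ and signs $\epsilon_j$ at positions $j\ne i,i+1$ are unaffected, and for the swapped pair one checks $c(i)=c(i+1)$ in each arrangement, so the pair contributes zero both before and after. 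Bubble-sorting reduces to the canonical case $\max(E)<\min(F)$, where a direct computation yields $[|F|]_p-[|E|]_p=(p^{|E|}-p^{|F|})/(1-p)$. The main obstacle is the off-diagonal sign bookkeeping, which involves four $(-1)^\theta$-factors that must be matched simultaneously; it becomes manageable by reducing exponents modulo~$2$ and applying the $\theta$-identities cited above.
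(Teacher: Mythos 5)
Your proposal is correct, and for the easy relations \eqref{eq:xypyx}, \eqref{eq:t1t2t2t1} it coincides with the paper's direct weight-counting verification. For the key relation \eqref{eq:rllr}, however, you take a genuinely different route: the paper argues diagrammatically, applying the compatibility axiom \eqref{Hr7} to the middle edge of the $RL$ and $LR$ diagrams and invoking degree preservation along the strands (Remark~\ref{rem:pres-degree}); the resulting crossing terms in the two expansions are identical and cancel, leaving two bubble-type diagrams that act on $\Lambda^m_p(V)\otimes\Lambda^n_p(V)$ by $[n]_p$ and $[m]_p$, whence $RL-LR=\sum_{m,n}\frac{p^m-p^n}{1-p}\,\pi_m\otimes\pi_n$. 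You instead compute on the set-theoretic basis with the structure constants of Theorem~\ref{thm:1}: your off-diagonal cancellation is valid (the $p$-exponents agree via $\theta_{X\sqcup\{a\},\{b\}}=\theta_{X,\{b\}}+\theta_{\{a\},\{b\}}$, and the four sign exponents differ by $\theta_{E,\{a\}}+\theta_{\{b\},F\setminus\{a\}}-\theta_{\{b\},F}-\theta_{E\setminus\{b\},\{a\}}=0$), the factor $p^{|E\cap F|}$ extracts correctly because $\theta_{\{a\},C}+\theta_{C,\{a\}}=|C|$ for $a\notin C$ and $p^{|C|}\bigl([\,|\dot F|\,]_p-[\,|\dot E|\,]_p\bigr)=\frac{p^{|E|}-p^{|F|}}{1-p}$, and your position statistic $c(i)$ together with the adjacent-transposition/bubble-sort argument does yield $[\,|F|\,]_p-[\,|E|\,]_p$ in the sorted case. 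What each approach buys: the paper's proof is shorter, needs only the braided-bialgebra axioms and the degree grading (no explicit structure constants), and fits the MOY-calculus framework used later; yours is elementary and self-contained given Theorem~\ref{thm:1}, at the cost of heavier sign and exponent bookkeeping, and in effect it re-derives by hand the degree-$(1,1)$ instance of the compatibility mechanism that the diagrammatic proof exploits wholesale.
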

\begin{proof}
 Relations~\eqref{eq:xypyx} and \eqref{eq:t1t2t2t1} can be verified directly using definitions~\eqref{eq:Tiuqsl2}--\eqref{eq:Ruqsl2}. For example,
 \be
\forall E,F\in\fP(\B)\colon \ T_1Lf_{E,F}=p^{1+|E|}Lf_{E,F}=pLT_1f_{E,F}\ \Rightarrow\   T_1L=pLT_1.
 \ee

 Relation~\eqref{eq:rllr} corresponds to the MOY diagrammatic identity in~\cite[Lemma A.6]{MR1659228} (see also~\cite[(3.5)]{MR3263166}). In our setting, it can be proved purely diagrammatically by using the compatibility relation of a braided Hopf algebra and the preservation of the degree along the strands of the braiding:
 \be
RL=\ \begin{tikzpicture}[yscale=.7,baseline=17]
 \coordinate (t1) at (0,2);
  \coordinate (t2) at (1,2);
   \coordinate (c1) at (0,4/3);
  \coordinate (c2) at (1,5/3);
   \coordinate (c3) at (0,2/3);
    \coordinate (c4) at (1,1/3);
     \coordinate (b1) at (0,0);
    \coordinate (b2) at (1,0);
 \path [draw,thick,postaction={on each segment={mid arrow}} ]
 (b1)-- (c3)--(c1)--(t1)(b2)--(c4)--(c2)--(t2)(c1)--(c2) node[midway,above]{\tiny $1$}
 (c4)--(c3) node[midway,below]{\tiny $1$};
\end{tikzpicture}
\ =\sum_{k=0}^1\
\begin{tikzpicture}[xscale=1.3,yscale=.7,baseline=17]
 \coordinate (t1) at (0,2);
  \coordinate (t2) at (1,2);
  \coordinate (r1) at (1,5/3);
   \coordinate (c1) at (0,4/3);
  \coordinate (c2) at (1/2,4/3);
   \coordinate (c3) at (0,2/3);
    \coordinate (c4) at (1/2,2/3);
    \coordinate (r2) at (1,1/3);
     \coordinate (b1) at (0,0);
    \coordinate (b2) at (1,0);
    \draw[thick,start arrow] (c4)--(c1);
    \draw[line width=3pt,white] (c3)--(c2);
     \draw[thick,mid end arrow] (c3)--(c2);
 \path [draw,thick,postaction={on each segment={mid arrow}} ]
 (b2)-- (r2)--(c4) node[midway,below]{\tiny $1$}(c1)--(t1)(r2)--(r1)--(t2)
 (b1)--(c3)--(c1)(c4)--(c2)node[midway,right]{\tiny $k$}--(r1)node[midway,above]{\tiny $1$};
\end{tikzpicture}
\ =\
\begin{tikzpicture}[xscale=1,yscale=.7,baseline=17]
 \coordinate (t1) at (0,2);
  \coordinate (t2) at (1,2);
   \coordinate (c1) at (0,4/3);
  \coordinate (c2) at (1,4/3);
   \coordinate (c3) at (0,2/3);
    \coordinate (c4) at (1,2/3);
     \coordinate (b1) at (0,0);
    \coordinate (b2) at (1,0);
    \draw[thick,start arrow] (c4)--(c1)node[near start,below]{\tiny $1$};
    \draw[line width=3pt,white] (c3)--(c2);
     \draw[thick,mid end arrow] (c3)--(c2)node[near end,above]{\tiny $1$};
 \path [draw,thick,postaction={on each segment={mid arrow}} ]
 (b2)--(c4)--(c2)--(t2)
 (b1)--(c3)--(c1)--(t1);
\end{tikzpicture}
\ +\
\begin{tikzpicture}[xscale=1,yscale=.7,baseline=17]
 \coordinate (t1) at (0,2);
  \coordinate (t2) at (1,2);
  \coordinate (c2) at (1,5/3);
    \coordinate (c4) at (1,1/3);
     \coordinate (b1) at (0,0);
    \coordinate (b2) at (1,0);
    \draw[thick,mid arrow] (c4) to[out=135,in=-135]node[midway,left]{\tiny $1$} (c2);
 \path [draw,thick,postaction={on each segment={mid arrow}} ]
 (b2)--(c4)--(c2)--(t2)
 (b1)--(t1);
\end{tikzpicture}\ ,
 \ee
 \be
LR=\ \begin{tikzpicture}[yscale=.7,baseline=17,xscale=-1]
 \coordinate (t1) at (0,2);
  \coordinate (t2) at (1,2);
   \coordinate (c1) at (0,4/3);
  \coordinate (c2) at (1,5/3);
   \coordinate (c3) at (0,2/3);
    \coordinate (c4) at (1,1/3);
     \coordinate (b1) at (0,0);
    \coordinate (b2) at (1,0);
 \path [draw,thick,postaction={on each segment={mid arrow}} ]
 (b1)-- (c3)--(c1)--(t1)(b2)--(c4)--(c2)--(t2)(c1)--(c2) node[midway,above]{\tiny $1$}
 (c4)--(c3) node[midway,below]{\tiny $1$};
\end{tikzpicture}
\ =\sum_{k=0}^1\
\begin{tikzpicture}[xscale=-1.3,yscale=.7,baseline=17]
 \coordinate (t1) at (0,2);
  \coordinate (t2) at (1,2);
  \coordinate (r1) at (1,5/3);
   \coordinate (c1) at (0,4/3);
  \coordinate (c2) at (1/2,4/3);
   \coordinate (c3) at (0,2/3);
    \coordinate (c4) at (1/2,2/3);
    \coordinate (r2) at (1,1/3);
     \coordinate (b1) at (0,0);
    \coordinate (b2) at (1,0);
    \draw[thick,start arrow] (c3)--(c2);
    \draw[line width=3pt,white] (c4)--(c1);
     \draw[thick,mid end arrow] (c4)--(c1);
 \path [draw,thick,postaction={on each segment={mid arrow}} ]
 (b2)-- (r2)--(c4) node[midway,below]{\tiny $1$}(c1)--(t1)(r2)--(r1)--(t2)
 (b1)--(c3)--(c1)(c4)--(c2)node[midway,left]{\tiny $k$}--(r1)node[midway,above]{\tiny $1$};
\end{tikzpicture}
\ =\
\begin{tikzpicture}[xscale=1,yscale=.7,baseline=17]
 \coordinate (t1) at (0,2);
  \coordinate (t2) at (1,2);
   \coordinate (c1) at (0,4/3);
  \coordinate (c2) at (1,4/3);
   \coordinate (c3) at (0,2/3);
    \coordinate (c4) at (1,2/3);
     \coordinate (b1) at (0,0);
    \coordinate (b2) at (1,0);
    \draw[thick,start arrow] (c4)--(c1)node[near end,above]{\tiny $1$};
    \draw[line width=3pt,white] (c3)--(c2);
     \draw[thick,mid end arrow] (c3)--(c2)node[near start,below]{\tiny $1$};
 \path [draw,thick,postaction={on each segment={mid arrow}} ]
 (b2)--(c4)--(c2)--(t2)
 (b1)--(c3)--(c1)--(t1);
\end{tikzpicture}
\ +\
\begin{tikzpicture}[xscale=-1,yscale=.7,baseline=17]
 \coordinate (t1) at (0,2);
  \coordinate (t2) at (1,2);
  \coordinate (c2) at (1,5/3);
    \coordinate (c4) at (1,1/3);
     \coordinate (b1) at (0,0);
    \coordinate (b2) at (1,0);
    \draw[thick,mid arrow] (c4) to[out=135,in=-135]node[midway,right]{\tiny $1$} (c2);
 \path [draw,thick,postaction={on each segment={mid arrow}} ]
 (b2)--(c4)--(c2)--(t2)
 (b1)--(t1);
\end{tikzpicture}\ ,
 \ee
so that
\begin{multline}
RL-LR=
\ \begin{tikzpicture}[xscale=1,yscale=.7,baseline=17]
 \coordinate (t1) at (0,2);
  \coordinate (t2) at (1,2);
  \coordinate (c2) at (1,5/3);
    \coordinate (c4) at (1,1/3);
     \coordinate (b1) at (0,0);
    \coordinate (b2) at (1,0);
    \draw[thick,mid arrow] (c4) to[out=135,in=-135]node[midway,left]{\tiny $1$} (c2);
 \path [draw,thick,postaction={on each segment={mid arrow}} ]
 (b2)--(c4)node[near start,left]{
 }--(c2)--(t2)
 (b1)--(t1)node[near start,right]{
 } ;
\end{tikzpicture}
\ -\
\begin{tikzpicture}[xscale=-1,yscale=.7,baseline=17]
 \coordinate (t1) at (0,2);
  \coordinate (t2) at (1,2);
  \coordinate (c2) at (1,5/3);
    \coordinate (c4) at (1,1/3);
     \coordinate (b1) at (0,0);
    \coordinate (b2) at (1,0);
    \draw[thick,mid arrow] (c4) to[out=135,in=-135]node[midway,right]{\tiny $1$} (c2);
 \path [draw,thick,postaction={on each segment={mid arrow}} ]
 (b2)--(c4)--(c2)--(t2) (b1)--(t1);
\end{tikzpicture}\
 =\sum_{m,n\ge0}([n]_p-[m]_p)\
\begin{tikzpicture}[xscale=1,yscale=.7,baseline=17]
 \coordinate (t1) at (0,2);
  \coordinate (t2) at (1,2);
     \coordinate (b1) at (0,0);
    \coordinate (b2) at (1,0);
 \path [draw,thick,postaction={on each segment={mid arrow}} ]
 (b2)--(t2)node[midway,right]{\tiny $n$}
 (b1)--(t1)node[midway,left]{\tiny $m$} ;
\end{tikzpicture} \\
= \sum_{m,n\ge0}\frac{p^m-p^n}{1-p}\pi_m\otimes\pi_n= \frac{T_1-T_2}{1-p}.\\
\end{multline}
\end{proof}
\begin{remark}
In the algebra generated by  $T_1$, $T_2$, $L$, and $R$,
 the elements
 \be
 C:=RL+\frac{pT_1+T_2}{(1-p)^2}=LR+\frac{T_1+pT_2}{(1-p)^2}
 \ee
 and
 \begin{equation}\label{eq:d-operator}
  D:=T_1T_2=T_2T_1
\end{equation}
generate the center.
\end{remark}
\begin{remark}
 Relations~\eqref{eq:xypyx}--\eqref{eq:rllr} can be viewed as defining a presentation of the quantum group $U_q(\mathfrak{gl}_2)$, where $q$ is related to $p$ via the identity $p=q^2$. In particular, there is an inclusion $U_q (\mathfrak{sl}_2)\subset U_q(\mathfrak{gl}_2)$ with the following identification of the generators $K$, $E$, $F$ of $U_q (\mathfrak{sl}_2)$ (see, for example, \cite[Chapter 1]{MR1359532}):
  \begin{equation}\label{eq:uqsl2-in-uqdl2}
 K= T_2D^{-1/2},\quad E= qR D^{-1/2},\quad F= L,
\end{equation}
where $D$ is defined in~\eqref{eq:d-operator}.
\end{remark}

\begin{proposition}\label{prop:LkRk}
 For any $k\in\Z_{\ge0}$, the following equalities hold:
 \be\label{eq:Lk}
L^{\langle k\rangle_p}=(\nabla\otimes\operatorname{id}_{\Lambda_p(V)})(\operatorname{id}_{\Lambda_p(V)}\otimes \Delta_{k)})=
\ \begin{tikzpicture}[xscale=1,yscale=0.4,baseline=13]
 \coordinate (nw) at (0,3);
  \coordinate (ne) at (1,3);
   \coordinate (sw) at (0,0);
    \coordinate (se) at (1,0);
     \coordinate (w) at (0,2);
      \coordinate (e) at (1,1);
      \draw [thick,mid arrow](w)--(nw);
     \draw[thick,mid arrow] (e)--(ne);
     \draw[thick,mid arrow] (e)--(w) node[midway, above] {\tiny $k$};
      \draw[thick,mid arrow] (se)--(e);
      \draw[thick,mid arrow] (sw)--(w);
\end{tikzpicture}
\ee
and
 \be\label{eq:Rk}
R^{\langle k\rangle_p}=(\operatorname{id}_{\Lambda_p(V)}\otimes\nabla)(\Delta_{(k}\otimes \operatorname{id}_{\Lambda_p(V)})=
\ \begin{tikzpicture}[xscale=1,yscale=0.4,baseline=13]
 \coordinate (nw) at (0,3);
  \coordinate (ne) at (1,3);
   \coordinate (sw) at (0,0);
    \coordinate (se) at (1,0);
     \coordinate (w) at (0,1);
      \coordinate (e) at (1,2);
      \draw [thick,mid arrow](w)--(nw);
     \draw[thick,mid arrow] (e)--(ne);
     \draw[thick,mid arrow] (w)--(e) node[midway, above] {\tiny $k$};
      \draw[thick,mid arrow] (se)--(e);
      \draw[thick,mid arrow] (sw)--(w);
\end{tikzpicture}\ .
\ee
\end{proposition}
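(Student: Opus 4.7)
The plan is to proceed by induction on $k$. The case $k=0$ is immediate: $L^{\langle 0\rangle_p}=\operatorname{id}_{\Lambda_p(V)^{\otimes 2}}$, and by counitality~\eqref{Hr4} the composition $(\nabla\otimes\operatorname{id}_{\Lambda_p(V)})(\operatorname{id}_{\Lambda_p(V)}\otimes\Delta_{0)})$ also reduces to the identity. The case $k=1$ is the definition~\eqref{eq:Luqsl2}. For the inductive step with $k\ge 1$, assuming the formula holds at stage $k$, the goal reduces to proving
\[
L\cdot L^{\langle k\rangle_p}=[k+1]_p\,(\nabla\otimes\operatorname{id}_{\Lambda_p(V)})(\operatorname{id}_{\Lambda_p(V)}\otimes\Delta_{k+1)}),
\]
since dividing by $[k+1]_p=[k+1]_p^!/[k]_p^!$ then gives $L^{\langle k+1\rangle_p}=L\cdot L^{\langle k\rangle_p}/[k+1]_p$ on the left.

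I would establish this equality entirely in the MOY calculus. By the inductive hypothesis, the composition $L\cdot L^{\langle k\rangle_p}$ corresponds to a diagram in which the right input strand splits off a degree-$k$ substrand that is merged into the left strand, and then splits off a further degree-$1$ substrand that is also merged into the left strand. Using coassociativity~\eqref{Hr2}, the iterated right-strand splittings of degrees $(k,1,\text{remainder})$ can be regrouped as first applying $\Delta_{k+1)}$ and then splitting the resulting degree-$(k+1)$ factor via $\Delta_{k)}$ into pieces of degrees $k$ and $1$. Simultaneously, by associativity~\eqref{Hr1}, the two successive merges with the left strand can be replaced by a single merge, after the detached degree-$k$ and degree-$1$ pieces have first been multiplied together. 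What remains in the middle of the diagram is the composition $\nabla\circ\Delta_{k)}$ acting on a degree-$(k+1)$ strand, which by Lemma~\ref{lem:buble} evaluates to $\qbinom{k+1}{k}{p}=[k+1]_p$ times the identity, yielding the required factor. The identity~\eqref{eq:Rk} for $R^{\langle k\rangle_p}$ is obtained by the mirror-image argument, using the second equality in~\eqref{eq:buble}.

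The main obstacle is the diagrammatic rearrangement in the inductive step: one must verify that the coassociative regrouping on the right strand and the associative regrouping on the left strand can be performed independently and combined to isolate a bubble $\nabla\Delta_{k)}$ within a degree-$(k+1)$ substrand. Since no braidings are involved in these particular diagrams, the regrouping is visually transparent in the MOY picture; turning it into a formal chain of equalities using~\eqref{Hr1}, \eqref{Hr2}, and~\eqref{Hr4} would amount to a routine but notationally heavy bookkeeping exercise.
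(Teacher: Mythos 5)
Your proposal is correct and follows essentially the same route as the paper: induction on $k$, with the inductive step reducing $L\cdot L^{\langle k\rangle_p}$ (the paper does $R^{\langle k\rangle_p}R$ and notes the other case is similar) to a diagram containing a $(k,1)$-bubble inside a degree-$(k+1)$ strand, which Lemma~\ref{lem:buble} evaluates to $[k+1]_p$, exactly as in the paper's graphical computation~\eqref{consist}. The only difference is cosmetic — which of $L$, $R$ you treat explicitly and whether the regrouping via associativity/coassociativity is written algebraically or left as a MOY diagram deformation.
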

\begin{proof}
 Let us prove~\eqref{eq:Rk} by induction on $k$. The case $k=1$ corresponds to the definition~\eqref{eq:Ruqsl2} of $R$. Assume that~\eqref{eq:Rk} holds for $k\ge1$. We proceed graphically:
 \be\label{consist}
R^{\langle k\rangle_p}R=\
\begin{tikzpicture}[xscale=1,yscale=0.4,baseline=20]
 \coordinate (nw) at (0,4);
  \coordinate (ne) at (1,4);
   \coordinate (sw) at (0,0);
    \coordinate (se) at (1,0);
     \coordinate (w1) at (0,1);
     \coordinate (w2) at (0,2);
      \coordinate (e1) at (1,2);
      \coordinate (e2) at (1,3);
\path [draw,thick,postaction={on each segment={mid  arrow}}]
(w1)--(e1) node[midway, below] {\tiny $1$} (w2)--(e2) node[midway, above] {\tiny $k$}
(se)--(e1)--(e2)--(ne) (sw)--(w1)--(w2)--(nw);
\end{tikzpicture}
\  =\
\begin{tikzpicture}[xscale=1.3,yscale=0.4,baseline=20]
 \coordinate (nw) at (0,4);
  \coordinate (ne) at (1,4);
   \coordinate (sw) at (0,0);
    \coordinate (se) at (1,0);
     \coordinate (w1) at (0,1.5);
     \coordinate (w2) at (0.33,1.83);
      \coordinate (e1) at (1,2.5);
      \coordinate (e2) at (0.66,2.16);
\path [draw,thick,postaction={on each segment={mid  arrow}}]
(w1)--(w2) node[midway, above] {
} (e2)--(e1)
(se)--(e1)--(ne) (sw)--(w1)--(nw);
\draw[thick,mid arrow] (w2) .. controls (0.37,2.5) and (0.6,2.5) .. (e2)
node[midway, above] {\tiny $k$};
\draw[thick,mid arrow] (w2) .. controls (0.37,1.5) and (0.6,1.5) .. (e2)
node[midway, below] {\tiny $1$};
\end{tikzpicture}
\  =\qbinom{k+1}{1}{p}\
\begin{tikzpicture}[xscale=1.3,yscale=0.4,baseline=20]
 \coordinate (nw) at (0,4);
  \coordinate (ne) at (1,4);
   \coordinate (sw) at (0,0);
    \coordinate (se) at (1,0);
     \coordinate (w1) at (0,1.5);
     \coordinate (w2) at (0.33,1.83);
      \coordinate (e1) at (1,2.5);
      \coordinate (e2) at (0.66,2.16);
\path [draw,thick,postaction={on each segment={mid  arrow}}]
(w1)--(e1) node[midway, above] {\tiny $k+1$}
(se)--(e1)--(ne) (sw)--(w1)--(nw) ;
\end{tikzpicture}
\  =[k+1]_{p}\
\begin{tikzpicture}[xscale=1.3,yscale=0.4,baseline=20]
 \coordinate (nw) at (0,4);
  \coordinate (ne) at (1,4);
   \coordinate (sw) at (0,0);
    \coordinate (se) at (1,0);
     \coordinate (w1) at (0,1.5);
     \coordinate (w2) at (0.33,1.83);
      \coordinate (e1) at (1,2.5);
      \coordinate (e2) at (0.66,2.16);
\path [draw,thick,postaction={on each segment={mid  arrow}}]
(w1)--(e1) node[midway, above] {\tiny $k+1$}
(se)--(e1)--(ne) (sw)--(w1)--(nw) ;
\end{tikzpicture}
\ee
so that
\be
\begin{tikzpicture}[xscale=1.3,yscale=0.4,baseline=20]
 \coordinate (nw) at (0,4);
  \coordinate (ne) at (1,4);
   \coordinate (sw) at (0,0);
    \coordinate (se) at (1,0);
     \coordinate (w1) at (0,1.5);
     \coordinate (w2) at (0.33,1.83);
      \coordinate (e1) at (1,2.5);
      \coordinate (e2) at (0.66,2.16);
\path [draw,thick,postaction={on each segment={mid  arrow}}]
(w1)--(e1) node[midway, above] {\tiny $k+1$}
(se)--(e1)--(ne) (sw)--(w1)--(nw) ;
\end{tikzpicture}
\ =\frac{R^{\langle k\rangle_p}R}{[k+1]_{p}}=\frac{R^{k+1}}{[k]_p^![k+1]_{p}}=\frac{R^{k+1}}{[k+1]_p^!}=R^{\langle k+1\rangle_p}.
\ee
The proof of~\eqref{eq:Lk} is similar.
\end{proof}

\begin{lemma}
 For all $m,n\in\Z_{\ge0}$, the following identity holds:
 \begin{equation}\label{eq:RL-to-LR}
 R^{\langle m\rangle_p}L^{\langle n\rangle_p}=\sum_{k=0}^{\min(m,n)}\frac{1}{(p)_k} L^{\langle n-k\rangle_p}R^{\langle m-k\rangle_p}\prod_{j=1}^k(T_1p^{k-m}-T_2p^{j-n}),
\end{equation}
 with the notation~\eqref{eq:devided-power}.
\end{lemma}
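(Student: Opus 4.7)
The plan is induction on $m \ge 0$, with $n$ arbitrary. For $m=0$ only the $k=0$ summand on the right-hand side survives (empty product), reducing both sides to $L^{\langle n\rangle_p}$. For the inductive step, I would write $R^{\langle m+1\rangle_p} L^{\langle n\rangle_p} = [m+1]_p^{-1}\, R \cdot R^{\langle m\rangle_p} L^{\langle n\rangle_p}$, apply the induction hypothesis, and then push the leftmost $R$ through each summand $L^{\langle n-k\rangle_p} R^{\langle m-k\rangle_p} P_k^{(m,n)}$, where $P_k^{(m,n)} := \prod_{j=1}^k(T_1 p^{k-m} - T_2 p^{j-n})$.

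The essential ingredient is the auxiliary single-parameter commutator
\[
 R L^{\langle r\rangle_p} = L^{\langle r\rangle_p} R + L^{\langle r-1\rangle_p}\,\frac{T_1 - p^{1-r} T_2}{1-p}, \qquad r \ge 1,
\]
itself proved by induction on $r$ from the base case \eqref{eq:rllr}, using $L^{\langle r+1\rangle_p} = L \cdot L^{\langle r\rangle_p}/[r+1]_p$ together with the commutations $T_1 L = p L T_1$ and $T_2 L = p^{-1} L T_2$ from \eqref{eq:xypyx} and the $q$-identity $[r]_p + p^r = [r+1]_p$. Applying this with $r = n-k$, then transporting the resulting $T_1, T_2$ past $R^{\langle m-k\rangle_p}$ via $T_1 R^{\langle s\rangle_p} = p^{-s} R^{\langle s\rangle_p} T_1$ and $T_2 R^{\langle s\rangle_p} = p^s R^{\langle s\rangle_p} T_2$, and using $R \cdot R^{\langle m-k\rangle_p} = [m-k+1]_p\, R^{\langle m-k+1\rangle_p}$, each original summand splits into a contribution that stays at index $k$ and a contribution that moves to index $k+1$. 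After re-indexing and matching against the claimed formula for $R^{\langle m+1\rangle_p} L^{\langle n\rangle_p}$, the proof reduces to the polynomial identity
\[
[m+1-k]_p\, P_k^{(m,n)} + [k]_p\bigl( p^{k-1-m} T_1 - p^{m+1-n} T_2\bigr)\, P_{k-1}^{(m,n)} = [m+1]_p\, P_k^{(m+1,n)}.
\]

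Since $T_1 T_2 = T_2 T_1$ by \eqref{eq:t1t2t2t1}, this last identity holds in the commutative polynomial ring $\lcf[T_1, T_2]$, and can be verified by a short secondary induction on $k$ combined with standard $q$-integer manipulations, or by comparing the coefficients of each monomial $T_1^a T_2^{k-a}$. The main obstacle is not conceptual but organizational: keeping track of the $p$-exponent shifts that arise from (i) the auxiliary identity, (ii) the commutations of $T_1, T_2$ with $R^{\langle m-k\rangle_p}$, and (iii) the re-indexing $k \mapsto k+1$. Once these are correctly marshalled, the key cancellation is driven by the $q$-identity $[m+1]_p = [k]_p + p^k [m+1-k]_p$, which closes the induction.
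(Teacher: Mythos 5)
Your proposal is correct, but it takes a genuinely different route from the paper: the paper disposes of \eqref{eq:RL-to-LR} in one line, citing the standard divided-power commutation formula $E^{(m)}F^{(n)}=\sum_k F^{(n-k)}E^{(m-k)}\prod_j\cdots$ in $U_q(\mathfrak{sl}_2)$ from \cite[Lemma~1.7]{MR1359532} and translating it through the identification \eqref{eq:uqsl2-in-uqdl2}, whereas you re-derive the formula from scratch by induction on $m$ using only the relations \eqref{eq:xypyx}--\eqref{eq:rllr}. Your reduction is sound: the auxiliary commutator $RL^{\langle r\rangle_p}=L^{\langle r\rangle_p}R+L^{\langle r-1\rangle_p}(T_1-p^{1-r}T_2)/(1-p)$ does follow from \eqref{eq:rllr} by the induction you indicate (with the convention $L^{\langle -1\rangle_p}=0$ covering the case $r=n-k=0$), the exponent bookkeeping you describe yields exactly the displayed polynomial identity, and that identity is true in the commutative ring generated by $T_1^{\pm1},T_2^{\pm1}$: using $[m+1]_p-[k]_p=p^k[m+1-k]_p$ and $p^k[m+1]_p-p^{m+1}[k]_p=p^k[m+1-k]_p$, it collapses to $\prod_{j=1}^{k}(p^{k-m}T_1-p^{j-n}T_2)=p^k\,(p^{k-1-m}T_1-p^{-n}T_2)\prod_{j=1}^{k-1}(p^{k-1-m}T_1-p^{j-n}T_2)$, which is immediate after extracting a factor $p^{j}$ from each term (both sides equal $p^{k(k+1)/2}\prod_{i=0}^{k-1}(p^{i-m}T_1-p^{-n}T_2)$); note also that at the boundary $k=m+1$ (possible when $n>m$) only your ``moved'' contribution exists, consistently with the vanishing of the coefficient $[m+1-k]_p$ of the ``stayed'' term. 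As for what each approach buys: the paper's citation is short and places the statement in its conceptual home, the $U_q(\mathfrak{gl}_2)$-action of Section~\ref{sec:5}, but it silently relies on matching normalisations, including the square root $D^{-1/2}$ in \eqref{eq:uqsl2-in-uqdl2}; your argument is longer but self-contained, works directly from \eqref{eq:xypyx}, \eqref{eq:t1t2t2t1}, \eqref{eq:rllr} without invoking $K,E,F$ or any square root of $D$, and in effect reproves Jantzen's formula in the present setting.
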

\begin{proof}
 This is the formula from~\cite[Lemma 1.7]{MR1359532} rewritten with identifications~\eqref{eq:uqsl2-in-uqdl2}.
\end{proof}
\begin{remark}
Identity~\eqref{eq:RL-to-LR}, restricted to $\Lambda^a_p(V)\otimes \Lambda^b_p(V)$, gives the following MOY diagrammatic identity:
 \begin{equation}\label{eq:RL-to-LR-MOY}
\begin{tikzpicture}[yscale=.4,xscale=1.3,baseline=25]
\coordinate (wn) at (0,5);
\coordinate (en) at (1,5);
\coordinate (ws) at (0,0);
\coordinate (es) at (1,0);
\coordinate (w1) at (0,1);
\coordinate (w2) at (0,2);
\coordinate (w3) at (0,3);
\coordinate (w4) at (0,4);
\coordinate (e1) at (1,1);
\coordinate (e2) at (1,2);
\coordinate (e3) at (1,3);
\coordinate (e4) at (1,4);
\draw[thick,mid arrow] (e1)--(w2) node[midway,below] {\tiny $n$} ;
\draw[thick,mid arrow] (w3)--(e4) node[midway,above] {\tiny $m$} ;
\draw[thick,mid arrow] (ws)--(w2) node[near start,left] {\tiny $a$} ;
\draw[thick,mid arrow] (w2)--(w3) ;
\draw[thick,mid arrow] (w3)--(wn) ;
\draw[thick,mid arrow] (es)--(e1) node[near start,right] {\tiny $b$} ;
\draw[thick,mid arrow] (e1)--(e4) ;
\draw[thick,mid arrow] (e4)--(en) ;
\end{tikzpicture}
=
\sum_{k=0}^{\min(m,n)}\frac{(p^{a-m+n-b};p)_k}{p^{(n-b)k}(1/p)_k}
\begin{tikzpicture}[yscale=.4,xscale=1.3,baseline=25]
\coordinate (wn) at (0,5);
\coordinate (en) at (1,5);
\coordinate (ws) at (0,0);
\coordinate (es) at (1,0);
\coordinate (w1) at (0,1);
\coordinate (w2) at (0,2);
\coordinate (w3) at (0,3);
\coordinate (w4) at (0,4);
\coordinate (e1) at (1,1);
\coordinate (e2) at (1,2);
\coordinate (e3) at (1,3);
\coordinate (e4) at (1,4);
\draw[thick,mid arrow] (w1)--(e2) node[midway,below] {\tiny $m-k$} ;
\draw[thick,mid arrow] (e3)--(w4) node[midway,above] {\tiny $n-k$} ;
\draw[thick,mid arrow] (ws)--(w1)node[near start,left] {\tiny $a$} ;
\draw[thick,mid arrow] (w1)--(w4) ;
\draw[thick,mid arrow] (w4)--(wn) ;
\draw[thick,mid arrow] (es)--(e2)  node[near start,right] {\tiny $b$};
\draw[thick,mid arrow] (e2)--(e3) ;
\draw[thick,mid arrow] (e3)--(en) ;
\end{tikzpicture}
 \end{equation}
 which is a counterpart of~\cite[Proposition A.10]{MR1659228} (see also \cite[(2.10)]{MR3263166}).
\end{remark}
\begin{remark}
 A part of MOY diagrammatic calculus is implemented by relations~\eqref{eq:bubleMOY} and \eqref{eq:RL-to-LR-MOY}, along with the MOY diagrammatic identities associated with the (braided) Hopf algebra properties of $\Lambda_p(V)$, specifically:
 \begin{equation*}
\begin{tikzpicture}[yscale=2.6,xscale=1.8,baseline=13pt]
\draw[thick,mid arrow] (0,0) to [out=90,in=-135](5pt,5pt);
\draw[thick,mid arrow] (10pt,0) to [out=90,in=-45](5pt,5pt);
\draw[thick,mid arrow] (5pt,5pt) to [out=90,in=-135](10pt,10pt);
\draw[thick,mid arrow] (20pt,0) to [out=90,in=-45](10pt,10pt);
\draw[thick,mid arrow] (10pt,10pt)--(10pt,15pt);
\end{tikzpicture}
\ =\
\begin{tikzpicture}[yscale=2.6,xscale=1.8,baseline=13pt]
\draw[thick,mid arrow]  (0,0) to [out=90,in=-135] (10pt,10pt);
\draw[thick,mid arrow]  (10pt,0) to [out=90,in=-135](15pt,5pt);
\draw[thick,mid arrow] (15pt,5pt) to [out=90,in=-45] (10pt,10pt);
\draw[thick,mid arrow]  (20pt,0) to [out=90,in=-45] (15pt,5pt);
\draw[thick,mid arrow] (10pt,10pt)--(10pt,15pt);
\end{tikzpicture} \quad(\text{associativity}),
\qquad
\begin{tikzpicture}[yscale=3.9,xscale=3,baseline=14.3pt]
\node (x) [unit]{};
\draw[thick,mid arrow]  (x) to [out=90,in=-135] node[midway,above]{\tiny $0$} +(5pt,5pt);
\draw[thick,mid arrow]  (x)+(10pt,0) to [out=90,in=-45] ++(5pt,5pt);
\draw[thick,mid arrow]  (x)+(5pt,5pt)--(5pt,10pt);
\end{tikzpicture}
\ =\
\begin{tikzpicture}[yscale=3.9,xscale=3,baseline=14.3pt]
\draw[thick,mid arrow] (0,0)--(0,10pt);
\end{tikzpicture}
\ =\
\begin{tikzpicture}[yscale=3.9,xscale=3,baseline=14.3pt]
\node (x) [unit]{};
\draw[thick,mid arrow]  (x) to [out=90,in=-45] node[midway,above]{\tiny $0$}  +(-5pt,5pt);
\draw[thick,mid arrow]  (x)+(-10pt,0) to [out=90,in=-135] ++(-5pt,5pt);
\draw[thick,mid arrow]  (x)+(-5pt,5pt)--(-5pt,10pt);
\end{tikzpicture}
\quad (\text{unitality}),
\end{equation*}
\begin{equation*}
\begin{tikzpicture}[yscale=2.6,xscale=1.8,baseline=13pt]
\draw[thick,mid arrow]  (0,0)--(0,5pt);
\draw[thick,mid arrow] (0,5pt) to [out=135,in=-90]  (-5pt,10pt);
\draw[thick,mid arrow]   (-5pt,10pt) to [out=135,in=-90] node[near start,above]{
}(-10pt,15pt);
\draw[thick,mid arrow] (-5pt,10pt) to [out=45,in=-90] (0,15pt);
\draw[thick,mid arrow]  (0,5pt) to [out=45,in=-90] node[midway,above]{
}(10pt,15pt);
\end{tikzpicture}
\ =\
\begin{tikzpicture}[yscale=2.6,xscale=1.8,baseline=13pt]
\draw[thick,mid arrow]  (0,0)--(0,5pt);
\draw[thick,mid arrow] (0,5pt) to [out=45,in=-90] (5pt,10pt);
\draw[thick,mid arrow] (5pt,10pt) to [out=45,in=-90] node[near start,above]{
}(10pt,15pt);
\draw[thick,mid arrow] (5pt,10pt) to [out=135,in=-90] (0,15pt);
\draw[thick,mid arrow]  (0,5pt) to [out=135,in=-90] node[midway,above]{
} (-10pt,15pt);
\end{tikzpicture}
\quad (\text{coassociativity}),
\qquad
\begin{tikzpicture}[yscale=3.9,xscale=3,baseline=-25]
\node (x) [counit]{};
\draw[thick,mid arrow] (x)+(5pt,-5pt) to [out=135,in=-90] node[near end,below]{\tiny $0$}  (x);
\draw[thick,mid arrow] (x)+(5pt,-5pt) to [out=45,in=-90] +(10pt,0);
\draw[thick,mid arrow](x)+(5pt,-10pt)--+(5pt,-5pt);
\end{tikzpicture}
\ =\
\begin{tikzpicture}[yscale=3.9,xscale=3,baseline=13]
\draw[thick,mid arrow] (0,0)--(0,10pt);
\end{tikzpicture}
\ =\
\begin{tikzpicture}[yscale=3.9,xscale=3,baseline=-25]
\node (x) [counit]{};
\draw[thick,mid arrow] (x)+(-5pt,-5pt) to [out=45,in=-90] node[near end,below]{\tiny $0$} (x) ;
\draw[thick,mid arrow] (x)+(-5pt,-5pt) to [out=135,in=-90] +(-10pt,0);
\draw[thick,mid arrow] (x)+(-5pt,-10pt)--+(-5pt,-5pt);
\end{tikzpicture}\quad
(\text{counitality}),
\end{equation*}
etc.
\end{remark}

\begin{lemma}\label{Lemma5}
For any $i,j\in\Z_{\ge0}$, the following summation formula holds
\be\label{eq:sum-form}
\sum_{k=0}^{i}\sum_{l=0}^{j}\qbinom{i}{k}{p}\frac{(p^{i-j-k};p)_{l}}{(p)_{l}}\gauss{k}p^{l}=\delta_{i,j}
\ee
\end{lemma}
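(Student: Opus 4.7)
The strategy is three successive applications of the finite $q$-binomial theorem
\[
(z;p)_n=\sum_{k=0}^n\qbinom{n}{k}{p}(-z)^kp^{k(k-1)/2}.
\]
First, expand $(p^{i-j-k};p)_l=(p^{i-j}\cdot p^{-k};p)_l$ by the above identity, treating $p^{-k}$ as the variable. Interchange the order of summation; using that $\gauss{k}=(-1)^kp^{k(k-1)/2}$ is exactly the ``Gaussian sign'' required, the resulting inner sum over $k$ is again of the same form and evaluates to $(p^{-m};p)_i$. This last factor vanishes for $0\le m\le i-1$, since it then contains the factor $1-p^0$. Hence the effective summation ranges collapse to $m\ge i$, and a fortiori $l\ge i$.

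Next, substitute the explicit evaluation
\[
(p^{-m};p)_i=(-1)^ip^{i(i-1)/2-mi}\,\frac{(p;p)_m}{(p;p)_{m-i}},\qquad m\ge i,
\]
shift $m\mapsto m-i$, and apply the $q$-binomial theorem a third time. The sum over the shifted index collapses to $(p^{i-j};p)_{l-i}$. After simplifying the $p$-power exponents and shifting $l\mapsto l-i$, the entire double sum reduces to
\[
p^{i^2-ij}\sum_{l=0}^{j-i}\frac{(p^{-(j-i)};p)_l}{(p;p)_l}\,p^l,
\]
with the right-hand side understood as the empty sum (equal to $0$) when $i>j$.

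A short case analysis concludes. For $i>j$ the sum is empty and hence $0$. For $i=j$ only the $l=0$ term survives, producing $1$. For $i<j$, set $n:=j-i\ge1$; a fourth application of the finite $q$-binomial theorem (using the analogous formula for $(p^{-n};p)_l/(p;p)_l$ valid on the range $0\le l\le n$) evaluates the sum to $(p^{1-n};p)_n$, which vanishes because it contains the factor $1-p^0=0$. Together this produces exactly $\delta_{i,j}$. The main obstacle is purely bookkeeping—tracking $p$-power exponents and index shifts through the successive substitutions—while the conceptual content is that the finite $q$-binomial theorem is being applied four times in sequence, with the Gaussian factor $\gauss{k}$ from the antipode precisely supplying the sign pattern needed to trigger the first cancellation.
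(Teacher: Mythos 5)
Your proposal is correct, and its first half coincides with the paper's own argument: both expand $(p^{i-j-k};p)_l$ by the $q$-binomial formula~\eqref{eq:qbinom}, perform the $k$-sum by a second application to produce the factor $(p^{-m};p)_i$, and use its vanishing for $m<i$ to truncate the range. The two arguments then diverge in which of the two remaining summations is eliminated first. The paper shifts $l\mapsto l+s$ and removes the $l$-sum with the telescoping identity~\eqref{eq:summation-formula}, $\sum_{m=0}^n p^m/(p)_m=1/(p)_n$, leaving a single sum over $s$ that one last $q$-binomial application turns into $\frac{p^{(i-j)i}}{(p)_{j-i}}(p^{i-j+1};p)_{j-i}=\delta_{i,j}$. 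You instead substitute the inversion $(p^{-m};p)_i=(-1)^ip^{i(i-1)/2-mi}(p)_m/(p)_{m-i}$ for $m\ge i$, remove the $m$-sum by a third $q$-binomial application (after the cancellation $\qbinom{l}{m}{p}(p)_m/(p)_l=1/(p)_{l-m}$ this yields $(p^{i-j};p)_{l-i}/(p)_{l-i}$), and finish the remaining $l$-sum with a fourth application, via $(p^{-n};p)_l/(p)_l=\gauss{l}p^{-nl}\qbinom{n}{l}{p}$, arriving at $p^{i^2-ij}(p^{i-j+1};p)_{j-i}=\delta_{i,j}$ (empty range when $i>j$). I checked your stated intermediate identities (the value of the inner $k$-sum, the inversion formula, the collapse to $(p^{i-j};p)_{l-i}$, the reduced single sum with prefactor $p^{i^2-ij}$, and the final case analysis), and they are all correct. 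What your route buys is that it uses nothing beyond the $q$-binomial theorem and the elementary inversion of a Pochhammer symbol with negative exponent, at the cost of one extra pass of exponent bookkeeping; the paper trades one of those applications for the separate summation formula~\eqref{eq:summation-formula}. Both finishes are equally rigorous, and the two final closed forms differ only by the nonvanishing factor $1/(p)_{j-i}$, which is immaterial since the Pochhammer factor already vanishes for $j>i$.
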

\begin{proof} We recall the $q$-binomial formula
\begin{equation}\label{eq:qbinom}
(x;p)_n=\sum_{m=0}^n\qbinom{n}{m}{p}\gauss{m}x^m,\quad \forall n\in\Z_{\ge0},
\end{equation}
and the summation formula
\begin{equation}\label{eq:summation-formula}
\sum_{m=0}^n\frac{p^m}{(p)_m}=\frac1{(p)_n},\quad \forall n\in\Z_{\ge0}.
\end{equation}

Denoting by $z_{i,j}$ the left-hand side of~\eqref{eq:sum-form},  and using~\eqref{eq:qbinom} twice --- first to expand the factor $(p^{i-j-k};p)_{l}$, and then to perform the summation over $k$ --- we obtain
\be
 z_{i,j}=\sum_{l=0}^j\frac{p^l}{(p)_{l}}\sum_{s=0}^l \qbinom{l}{s}{p}\gauss{s}\sum_{k=0}^{i} \gauss{k}\qbinom{i}{k}{p}p^{(i-j-k)s}=\sum_{l=0}^j\frac{p^l}{(p)_{l}}\sum_{s=0}^l \gauss{s}\qbinom{l}{s}{p}(p^{-s};p)_ip^{(i-j)s}
\ee
--- We then exchange the order of summation, shift the variable $l\mapsto l+s$, apply~\eqref{eq:summation-formula}, and use the fact that $(p^{-s};p)_i\ne 0$ only if $s\ge i$. In this way, we obtain  ---
\be
 =\sum_{s=0}^j\gauss{s}\frac{(p^{-s};p)_i}{(p)_{s}}p^{(i-j+1)s}\sum_{l=0}^{j-s} \frac{p^{l}}{(p)_{l}}
  =\sum_{s=i}^j\gauss{s}\frac{(p^{-s};p)_i}{(p)_{s}(p)_{j-s}}p^{(i-j+1)s}
\ee
  --- where the last expression trivially vanishes unless $j\ge i$; from this point on, we assume that this condition holds.
  Finally, we shift the variable $s\mapsto s+i$, use the identity $(p^{-s-i};p)_i=\frac{(1/p)_{s+i}}{(1/p)_{s}}$, simplify, and apply the $q$-binomial formula~\eqref{eq:qbinom} to remove the summation ---
\begin{align}  &=\sum_{s=0}^{j-i}\gauss{s+i}\frac{(p^{-s-i};p)_i}{(p)_{s+i}(p)_{j-i-s}}p^{(i-j+1)(s+i)}
  =\sum_{s=0}^{j-i}\gauss{s+i}\frac{(1/p)_{s+i}}{(p)_{s+i}(p)_{j-i-s}(1/p)_s}p^{(i-j+1)(s+i)} \\
   &=\sum_{s=0}^{j-i}\frac{1}{(p)_{j-i-s}(1/p)_s}p^{(i-j)(s+i)}   =p^{(i-j)i} \sum_{s=0}^{j-i}\frac{\gauss{s}}{(p)_{j-i-s}(p)_s}p^{(i-j+1)s}\nonumber \\
    &=\frac{p^{(i-j)i} }{(p)_{j-i}}\sum_{s=0}^{j-i}\qbinom{j-i}{s}{p}\gauss{s}p^{(i-j+1)s}  =\frac{p^{(i-j)i} }{(p)_{j-i}}(p^{i-j+1};p)_{j-i} =\delta_{i,j}.\nonumber
 \end{align}
\end{proof}

\begin{proposition}\label{prop:braiding-MOY}
 The action of the braiding $\hat\tau$ of $\Lambda_p(V)$ is of the form
 \begin{equation}
 \hat\tau f_{E,F}=\sum_{k=0}^{\min(|E|,|F|)}\gauss{k}L^{\langle |F|-k\rangle_p}R^{\langle |E|-k\rangle_p} f_{E,F},
\end{equation}
 where we use notations~\eqref{eq:devided-power} and \eqref{eq:gauss}, which correspond to the MOY diagrammatic identity
\begin{equation}\label{MOYtau}
\hat\tau(\pi_m\otimes\pi_n)=
\begin{tikzpicture}[yscale=1.5,baseline=20]
 \draw[thick,mid end arrow] (1,0) to [out=90,in=-90] node[near start,right] {\tiny  $n$} (0,1);
\draw[thick,line width=3pt,white] (0,0) to [out=90,in=-90] (1,1);
\draw[thick,mid end arrow] (0,0) to [out=90,in=-90] node[near start,left] {\tiny  $m$}  (1,1);
\end{tikzpicture}
=
\sum_{k=0}^{\min(m,n)}\gauss{k}
\begin{tikzpicture}[yscale=.32,xscale=1.3,baseline=13]
\coordinate (wn) at (0,4);
\coordinate (en) at (1,4);
\coordinate (ws) at (0,-1);
\coordinate (es) at (1,-1);
\coordinate (w1) at (0,0);
\coordinate (w2) at (0,1);
\coordinate (w3) at (0,2);
\coordinate (w4) at (0,3);
\coordinate (e1) at (1,0);
\coordinate (e2) at (1,1);
\coordinate (e3) at (1,2);
\coordinate (e4) at (1,3);
\draw[thick,mid arrow] (w1)--(e2)  ;
\draw[thick,mid arrow] (e3)--(w4) node[midway,above] {\tiny  $n-k$}
;
\draw[thick,mid arrow] (ws)--(w1) node[midway,left] {\tiny  $m$} ;
\draw[thick,mid arrow] (w1)--(w4)  node[midway,left] {\tiny  $k$} ;
\draw[thick,mid arrow] (w4)--(wn)node[midway,left] {
} ;
\draw[thick,mid arrow] (es)--(e2)  node[midway,right] {\tiny  $n$};
\draw[thick,mid arrow] (e2)--(e3) ;
\draw[thick,mid arrow] (e3)--(en)  node[midway,right] {
} ;
\end{tikzpicture}.
\end{equation}
\end{proposition}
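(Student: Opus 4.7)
The plan is to verify the identity on each basis element $f_{E,F}$ with $|E|=m$ and $|F|=n$, and to reduce the matching of coefficients to the summation identity of Lemma~\ref{Lemma5}. Using Proposition~\ref{prop:LkRk}, one has
\begin{equation*}
L^{\langle n-k\rangle_p}R^{\langle m-k\rangle_p}=(\nabla\otimes\operatorname{id})(\operatorname{id}\otimes\Delta_{n-k)})(\operatorname{id}\otimes\nabla)(\Delta_{(m-k}\otimes\operatorname{id}),
\end{equation*}
and the explicit product and coproduct formulas of Theorem~\ref{thm:1} then yield $L^{\langle n-k\rangle_p}R^{\langle m-k\rangle_p}f_{E,F}$ as a double sum, indexed by a $k$-element subset $A\subseteq E$ (arising from $\Delta_{(m-k}f_E$) and an $(n-k)$-element subset $C\subseteq(E\setminus A)\cup F$ (arising from $\Delta_{n-k)}$), subject to the disjointness conditions $(E\setminus A)\cap F=\emptyset$ and $A\cap C=\emptyset$, with each summand carrying a sign-and-power weight $(-p)^{\theta_{A,E\setminus A}+\theta_{C,B\setminus C}}(-1)^{\theta_{E\setminus A,F}+\theta_{A,C}}f_{A\cup C}\otimes f_{B\setminus C}$ in which $B:=(E\setminus A)\cup F$.

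Next, I would reindex by the bijection $(k,A,C)\leftrightarrow (E',F')$ with $E':=A\cup C$ and $F':=B\setminus C$, noting that $|E'|=n$, $|F'|=m$, and $E'\cup F'=E\cup F$, and collect the contribution to a fixed output basis element $f_{E',F'}$. Applying the symmetry~\eqref{eq:mAB+mBA} and the additive identity $\theta_{A,B}+\theta_{A\sqcup B,C}=\theta_{A,B\sqcup C}+\theta_{B,C}$ repeatedly, the $\theta$-factors in the weight can be consolidated so that the residual sum over $k$ assumes exactly the shape of the left-hand side of Lemma~\ref{Lemma5}, with integer parameters $i$, $j$, $l$ that encode the intersection pattern of $E,F,E',F'$. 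The lemma then evaluates this sum to $\delta_{i,j}$, which pins down the support of the right-hand side on the set-theoretic basis. Independently, the left-hand side $\hat\tau f_{E,F}$ is expanded via the fusion relation~\eqref{eq:tau-hat} using the elementary braidings~\eqref{eq:sl-braiding} and then simplified with the product formula~\eqref{eq:prod-comb}, producing a coefficient of $f_{E',F'}$ that matches the consolidated expression obtained above.

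The main obstacle will be the combinatorial bookkeeping in the reorganisation of the $k$-sum. Each $\theta$-factor has a specific origin---from a coproduct via $(-p)^{\theta_{A,E\setminus A}}$, from a product sign via $(-1)^{\theta_{E\setminus A,F}}$, or from reindexing---and reshaping the full weight into the precise form that triggers Lemma~\ref{Lemma5} requires patient use of the $\theta$-symbol identities. The guiding principle throughout is that the Kronecker $\delta_{i,j}$ delivered by Lemma~\ref{Lemma5} is the algebraic avatar of the support constraint on $f_{E',F'}$ dictated by the elementary braiding of $E$ against $F$ on the left-hand side, and it is this structural match that makes the full coefficient comparison tractable.
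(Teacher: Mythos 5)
Your expansion of the right-hand side is sound: writing $L^{\langle n-k\rangle_p}R^{\langle m-k\rangle_p}$ via Proposition~\ref{prop:LkRk} and expanding with \eqref{eq:prod-comb}--\eqref{eq:comult-comb} does give the double sum over $A$ and $C$ you describe (this is essentially what the paper later does, for the closely related operator $B$, in Lemma~\ref{lem:betaEGFH}, where the subset sum is consolidated with Lemma~\ref{lem:sum-to-product}). The genuine gap is on the other side of the comparison: you treat the computation of $\hat\tau f_{E,F}$ ``via the fusion relation~\eqref{eq:tau-hat} using the elementary braidings~\eqref{eq:sl-braiding}'' as a routine step, but this is precisely the hard content of the proposition. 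Each of the $mn$ elementary braidings contributes up to two terms, so the fusion expansion produces an exponential number of monomials that must be reduced in the exterior algebra and resummed; obtaining a closed form for the coefficient of $f_{F',E'}$ this way is equivalent to proving the matrix-coefficient formula of Proposition~\ref{braiding_el}, which in the paper is a \emph{consequence} of the present proposition, not an input. Your proposal gives no mechanism for organising that expansion, and ``producing a coefficient that matches'' is exactly the statement to be proved.

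Relatedly, the role you assign to Lemma~\ref{Lemma5} is not substantiated. In your basis computation the residual sums are over subsets, and they consolidate (via Lemma~\ref{lem:sum-to-product} or Lemma~\ref{lem:qbin}) into products of factors $\bigl(p^{\theta_{A,H}-\theta_{A,G}}-1\bigr)$ or single $q$-binomial sums handled by the $q$-binomial formula~\eqref{eq:qbinom}; nothing in your setup generates the characteristic double sum with the factor $(p^{i-j-k};p)_l/(p)_l$ appearing in Lemma~\ref{Lemma5}. In the paper that structure arises from a different starting point: the braiding is first rewritten through the structural maps as $\beta_{H,H}=(\nabla\otimes\nabla)(S\otimes(\Delta\nabla)\otimes S)(\Delta\otimes\Delta)$ (equation~\eqref{eq:braiding-str-constants}), whose two antipodes contribute the Gaussian factors, and the resulting $R^{\langle\cdot\rangle}L^{\langle\cdot\rangle}$ compositions are reordered with the $U_q(\mathfrak{gl}_2)$ relation~\eqref{eq:RL-to-LR-MOY} and simplified with the bubble identity~\eqref{eq:bubleMOY}; only then does the coefficient become the double sum $z_{i,j}$ that Lemma~\ref{Lemma5} collapses to $\delta_{i,j}$. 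If you want to keep a basis-by-basis verification, you must either supply an independent derivation of the left-hand side coefficients (a substantial combinatorial argument you have not sketched) or, more economically, adopt the paper's route through \eqref{eq:braiding-str-constants}, which avoids touching the elementary braidings altogether.
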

\begin{proof}
Composing  \eqref{eq:braiding-str-constants} with $\pi_m\otimes\pi_n$
and using the MOY diagrammatic language, we obtain
\begin{equation}\label{tauLR1}
\begin{tikzpicture}[yscale=1.5,baseline=20]
 \draw[thick,mid end arrow] (1,0) to [out=90,in=-90] node[near start,right] {\tiny  $n$} (0,1);
\draw[thick,line width=3pt,white] (0,0) to [out=90,in=-90] (1,1);
\draw[thick,mid end arrow] (0,0) to [out=90,in=-90] node[near start,left] {\tiny  $m$}  (1,1);
\end{tikzpicture}=
\begin{tikzpicture}[scale=1,baseline=13,x=1pt,y=1pt]
\draw[thick,mid arrow] (0,-10) -- (0,0)  node[midway,left] {\tiny $m$};
\draw[thick] (0,0) -- (0,13.5) ;
\draw[thick,mid arrow] (0,16.5) -- (0,30) ;
\draw[thick] (40,0) -- (40,13.5) ;
\draw[thick,mid arrow] (40,16.5) -- (40,30) ;
\draw[thick,mid arrow] (0,0) -- (20,10);
\draw[thick,mid arrow] (20,10) -- (20,20);
\draw[thick,mid arrow] (20,20) -- (0,30);
\draw[thick,mid arrow] (0,30) -- (0,40);
\draw[thick,mid arrow] (40,-10) -- (40,0)  node[midway,right] {\tiny $n$};
\draw[thick,mid arrow] (40,0) -- (20,10);
\draw[thick,mid arrow] (20,20) -- (40,30);
\draw[thick,mid arrow] (40,30) -- (40,40) ;
\draw (0,15) node (x) [antipode]{} ++(40,0) node (y) [antipode]{};
\end{tikzpicture}
=
\sum_{s=-m}^{n}\sum_{k=0}^{\min(m,n-s)} \sum_{l=0}^{\min(n,m+s)} \gauss{k}\gauss{l}
\begin{tikzpicture}[scale=1,baseline=13,x=1pt,y=1pt]
\draw[thick,mid arrow] (0,-10) -- (0,0)  node[midway,left] {\tiny $m$};
\draw[thick,mid arrow] (0,0) -- (0,30) node[midway,left] {\tiny $k$
};
\draw[thick,mid arrow] (0,0) -- (20,10)node[midway,above] {
};
\draw[thick,mid arrow] (20,10) -- (20,20);
\draw[thick,mid arrow] (20,20) -- (0,30);
\draw[thick,mid arrow] (0,30) -- (0,40) node[midway,left] {
};
\draw[thick,mid arrow] (40,-10) -- (40,0)  node[midway,right] {\tiny $n$};
\draw[thick,mid arrow] (40,0) -- (40,30) node[midway,right] {\tiny $l$
};
\draw[thick,mid arrow] (40,0) -- (20,10)node[midway,above] {
};
\draw[thick,mid arrow] (20,20) -- (40,30)node[midway,above] {
};
\draw[thick,mid arrow] (40,30) -- (40,40) node[midway,right] {\tiny $m+s$
};
\end{tikzpicture},
\ee
where the factors $\gauss{k}$ and $\gauss{l}$ come from the action of the antipode \eqref{eq:antipode-comp}.
Deforming the last diagram, we can use \eqref{eq:RL-to-LR-MOY}:
\be
\begin{tikzpicture}[xscale=1.5,baseline=13,x=1pt,y=1pt]
\draw[thick,mid arrow] (0,-10) -- (0,0)  node[near start,left] {\tiny $m$};
\draw[thick,mid arrow] (0,0) -- (0,30) ;
\draw[thick,mid arrow] (0,0) to [out=150, in =-150]  node[midway,left] {\tiny  $k$}(0,30) ;
\draw[thick,mid arrow] (0,22) -- (35,30) node[midway,above] {\tiny $m+s-l$};
\draw[thick,mid arrow] (0,30) -- (0,40) node[near end,left] {
};
\draw[thick,mid arrow] (35,-10) -- (35,0)  node[near start,right] {\tiny $n$};
\draw[thick,mid arrow] (35,0) -- (35,30) node[midway,right] {
} ;
\draw[thick,mid arrow] (35,0) -- (0,8) node[midway,below] {\tiny $n-l$} ;
\draw[thick,mid arrow] (35,30) -- (35,40) node[near end,right] {
};
\end{tikzpicture}
 =
\sum_{j=l}^{\min(n,m+s)}\frac{(p^{-k-s};p)_{j-l}}{(1/p)_{j-l}}p^{l(j-l)}\
\begin{tikzpicture}[xscale=1.5,baseline=13,x=1pt,y=1pt]
\draw[thick,mid arrow] (0,-10) -- (0,-3)  node[near start,left] {\tiny $m$};
\draw[thick,mid arrow] (0,-3) -- (0,33) ;
\draw[thick,mid arrow] (0,-3) to [out=150, in =-150] node[midway,left] {\tiny  $k$}(0,33) ;
\draw[thick,mid arrow]  (35,22) -- (0,30)  node[midway,above] {\tiny $n-j$
};
\draw[thick,mid arrow] (0,33) -- (0,40) node[near end,left] {
};
\draw[thick,mid arrow] (35,-10) -- (35,0)  node[near start,right] {\tiny $n$};
\draw[thick,mid arrow] (35,0) -- (35,30)  ;
\draw[thick,mid arrow] (0,0) -- (35,8) node[midway,below] {\tiny $m+s-j$} ;
\draw[thick,mid arrow] (35,30) -- (35,40) node[near end,right] {
};
\end{tikzpicture}.
\end{equation}
Now, using associativity, coassociativity and \eqref{eq:bubleMOY}, we  obtain
\be
\begin{tikzpicture}[xscale=1.5,baseline=13,x=1pt,y=1pt]
\draw[thick,mid arrow] (0,-10) -- (0,-3)  node[near start,left] {\tiny $m$};
\draw[thick,mid arrow] (0,-3) -- (0,33) ;
\draw[thick,mid arrow] (0,-3) to [out=150, in =-150] node[midway,left] {\tiny $k$}(0,33) ;
\draw[thick,mid arrow]  (35,22) -- (0,30) node[midway,above] {\tiny $n-j$};
\draw[thick,mid arrow] (0,33) -- (0,40) node[near end,left] {
};
\draw[thick,mid arrow] (35,-10) -- (35,0)  node[near start,right] {\tiny $n$};
\draw[thick,mid arrow] (35,0) -- (35,30)  ;
\draw[thick,mid arrow] (0,0) -- (35,8) node[midway,below] {\tiny $m+s-j$} ;
\draw[thick,mid arrow] (35,30) -- (35,40) node[near end,right] {
};
\end{tikzpicture}
=
\begin{tikzpicture}[xscale=1.5,baseline=13,x=1pt,y=1pt]
\draw[thick,mid arrow] (0,-10) -- (0,0)  node[near start,left] {\tiny $m$};
          \draw [thick,mid arrow](0,0)--(0,8) node[midway,left] {
          } ;
          \draw [thick,mid arrow](0,22)--(0,30) node[midway,left] {
          } ;
    \draw[thick,mid arrow] (0,8) to [out=150, in=-150]  node[midway,left] {\tiny $k$} (0,22);
      \draw[thick,mid arrow] (0,8) to [out=30, in=-30] (0,22) ;
      \draw[thick,mid arrow]  (35,22) -- (0,30) node[midway,above] {\tiny $n-j$};
\draw[thick,mid arrow] (0,30) -- (0,40) node[near end,left] {
};
\draw[thick,mid arrow] (35,-10) -- (35,0)  node[near start,right] {\tiny $n$};
\draw[thick,mid arrow] (35,0) -- (35,30)  ;
\draw[thick,mid arrow] (0,0) -- (35,8) node[midway,below] {\tiny $m+s-j$} ;
\draw[thick,mid arrow] (35,30) -- (35,40) node[near end,right] {
};
\end{tikzpicture}
 =
\qbinom{j-s}{k}{p}
\begin{tikzpicture}[xscale=1.5,baseline=13,x=1pt,y=1pt]
\draw[thick,mid arrow] (0,-10) -- (0,-3)  node[near start,left] {\tiny $m$};
\draw[thick,mid arrow] (0,-3) -- (0,33) node[midway,right] {
};
\draw[thick,mid arrow]  (35,22) -- (0,30) node[midway,above] {\tiny $n-j$};
\draw[thick,mid arrow] (0,33) -- (0,40) node[near end,left] {
};
\draw[thick,mid arrow] (35,-10) -- (35,0)  node[near start,right] {\tiny $n$};
\draw[thick,mid arrow] (35,0) -- (35,30)  ;
\draw[thick,mid arrow] (0,0) -- (35,8) node[midway,below] {\tiny $m+s-j$} ;
\draw[thick,mid arrow] (35,30) -- (35,40) node[near end,right] {
};
\end{tikzpicture}.
\ee
Putting everything together, changing the variables $s\mapsto j-i$,  $l\mapsto j-l$, and reordering the summations, we obtain
\begin{equation}
\begin{tikzpicture}[yscale=1.5,baseline=20]
 \draw[thick,mid end arrow] (1,0) to [out=90,in=-90] node[near start,right] {\tiny  $n$} (0,1);
\draw[thick,line width=3pt,white] (0,0) to [out=90,in=-90] (1,1);
\draw[thick,mid end arrow] (0,0) to [out=90,in=-90] node[near start,left] {\tiny  $m$}  (1,1);
\end{tikzpicture}
= \sum_{i=0}^{m}\sum_{j=0}^{n}\gauss{j}z_{i,j}
\begin{tikzpicture}[xscale=1,baseline=13,x=1pt,y=1pt]
\draw[thick,mid arrow] (0,-10) -- (0,-3)  node[near start,left] {\tiny $m$};
\draw[thick,mid arrow] (0,-3) -- (0,33) node[midway,right] {
};
\draw[thick,mid arrow]  (35,22) -- (0,30) node[midway,above] {\tiny $n-j$};
\draw[thick,mid arrow] (0,33) -- (0,40) node[near end,left] {
};
\draw[thick,mid arrow] (35,-10) -- (35,0)  node[near start,right] {\tiny $n$};
\draw[thick,mid arrow] (35,0) -- (35,30)  ;
\draw[thick,mid arrow] (0,0) -- (35,8) node[midway,below] {\tiny $m-i$} ;
\draw[thick,mid arrow] (35,30) -- (35,40) node[near end,right] {
};
\end{tikzpicture},
\end{equation}
where
\be
z_{i,j}:=\sum_{k=0}^{i}\sum_{l=0}^{j}\qbinom{i}{k}{p}\frac{(p^{i-j-k};p)_{l}}{(p)_{l}}\gauss{k}p^{l}.
\ee
Finally, using { Lemma \ref{Lemma5}}, we obtain formula~\eqref{MOYtau}.
\end{proof}
\begin{remark}
 Formula~\eqref{MOYtau} for the braiding $\hat\tau$ reflects the  preservation of the degree along the strands, in accordance with the general property of tensor and Nichols algebras stated in Remark~\ref{rem:pres-degree}.
\end{remark}
\begin{remark}
  Formula~\eqref{MOYtau} is a variant  of the one given in~\cite[Theorem 5.1]{MR1659228} and~\cite[Corollary 6.2.3]{MR3263166}.
\end{remark}

\section{Matrix coefficients of the braiding of \texorpdfstring{$\Lambda_p(V)$}{Lg}}\label{sec:2}

In this section we explicitly calculate the matrix elements of the braiding matrix in the set-theoretic basis.
First, we introduce a different representation for $\hat\tau$ based on an analogy with particle
physics.

The braiding  of  $\Lambda_p(V)$ can be decomposed into a sum
\be
\hat\tau=\sum_{k\in\Z_{\ge0}}\tau_k
\ee
where $\tau_k$ acts on the set-theoretic basis of $\Lambda_p(V)^{\otimes 2}$ as follows
\begin{equation}\label{eq:s-coeff}
\tau_kf_{E,F}=(-1)^{|E||F|}\sum s_{E,G;F,H} f_{F',E'}
\end{equation}
where
\be
E':=(E\setminus G)\cup H,\quad F':=(F\setminus H)\cup G,
\ee
 and the summation runs over the subsets $G\in \binom{E\setminus F}{k}$ and $H\in \binom{F\setminus E}{k}$.
In particular, the conditions on $G$ and $H$ imply the equalities
\begin{equation}\label{eq:balance1}
|E'|=|E|,\quad |F'|=|F|
\end{equation}
in agreement with Remark~\ref{rem:pres-degree}.

The simplest case of~\eqref{eq:s-coeff} corresponds to $k=0$ with the only option $G=H=\emptyset$:
\begin{equation}\label{eq:0-channel}
\tau_0f_{E,F}=(-1)^{|E||F|}s_{E;F} f_{F,E},\quad s_{E;F}:=s_{E,\emptyset;F,\emptyset}.
\end{equation}
Adopting terminology from particle physics, we refer to $\tau_0$ as the `(elastic) scattering channel' of the braiding $\hat\tau$, and to $\tau_k$ as the `size-$k$ exchange channel'.

Before we proceed to calculation of the coefficients $s_{E,G;F,H}$, let us prove two useful lemmas.

\begin{lemma}\label{lem:sum-to-product}
Let $E$ be a finite set and let $R$ be a commutative unital ring. Let $g\colon \mathcal{P}(E)\to R$ be a multiplicative map, meaning that for any disjoint subsets $A,B\in \mathcal{P}(E)$, we have $g(A\sqcup B)=g(A)g(B)$. Then the  following identity holds
 \begin{equation}\label{eq:multiplicative-map}
 \sum_{A\subseteq E}g(A)=g(\emptyset)\prod_{A\in\binom{E}{1}}(1+g(A)).
\end{equation}
\end{lemma}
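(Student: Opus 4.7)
The plan is to expand the right-hand side as a sum over subsets and match term-by-term with the left-hand side, using the multiplicativity of $g$ to identify $g(A)$ with a product of values on singletons.

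First, I would distribute the product on the right using the bijection between subsets $S$ of $\binom{E}{1}$ and subsets $B\subseteq E$ (sending $\{a\}\in S$ to $a\in B$):
\begin{equation*}
g(\emptyset)\prod_{A\in\binom{E}{1}}\bigl(1+g(A)\bigr)
=g(\emptyset)\sum_{B\subseteq E}\prod_{a\in B}g(\{a\}).
\end{equation*}
Next, applying multiplicativity iteratively to the decomposition $B=\bigsqcup_{a\in B}\{a\}$ of any nonempty $B$, I obtain $g(B)=\prod_{a\in B}g(\{a\})$, while for $B=\emptyset$ the empty product equals the unit of $R$.

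The remaining ingredient is to absorb the prefactor $g(\emptyset)$. Taking $A\sqcup\emptyset=A$ in the multiplicativity hypothesis gives $g(A)=g(A)g(\emptyset)$ for every $A\subseteq E$. In particular, for every nonempty $B\subseteq E$ one has $g(\emptyset)g(B)=g(B)$, while the $B=\emptyset$ term contributes $g(\emptyset)\cdot 1=g(\emptyset)$. Therefore
\begin{equation*}
g(\emptyset)\sum_{B\subseteq E}\prod_{a\in B}g(\{a\})
=g(\emptyset)+\sum_{\emptyset\ne B\subseteq E}g(B)
=\sum_{B\subseteq E}g(B),
\end{equation*}
which is precisely the left-hand side of \eqref{eq:multiplicative-map}.

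There is no real obstacle here; the only subtle point is keeping track of $g(\emptyset)$, which need not equal the unit of $R$ (it is merely idempotent, since $g(\emptyset)=g(\emptyset\sqcup\emptyset)=g(\emptyset)^2$). This is exactly why the factor $g(\emptyset)$ appears explicitly on the right-hand side, and the identity $g(A)g(\emptyset)=g(A)$ derived above ensures that this prefactor is invisible on all nonempty terms.
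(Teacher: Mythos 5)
Your proof is correct, and it takes a different route from the paper: the paper proves the identity by induction on $|E|$, fixing an element $x\in E$, splitting $\sum_{A\subseteq E}g(A)$ into subsets containing $x$ and those not, and pulling out the factor $(1+g(\{x\}))$; you instead expand the right-hand side directly via the bijection between subsets of $\binom{E}{1}$ and subsets of $E$, use multiplicativity to rewrite $\prod_{a\in B}g(\{a\})=g(B)$ for nonempty $B$, and then absorb the prefactor $g(\emptyset)$ using the identity $g(A)g(\emptyset)=g(A)$. The two arguments are of course closely related (your expansion is essentially the induction unrolled), but your version makes explicit the one genuinely delicate point, namely that $g(\emptyset)$ is only an idempotent of $R$ and need not be the unit, which is why it appears as a prefactor on the right-hand side; in the paper's induction this is handled silently by the base case $|E|=0$ and by never invoking empty products of singleton values. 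The paper's induction is marginally shorter to write; your direct expansion is more transparent about where commutativity, distributivity, and the role of $g(\emptyset)$ enter. Both are complete proofs of the statement as given.
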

\begin{proof}
We proceed by induction on $|E|$. For $|E|=0$, identity~\eqref{eq:multiplicative-map} obviously holds. Assume that it holds for all sets of cardinality $k\ge0$, and let $|E|=k+1$. Fix an element $x\in E$, and set $F:=E\setminus\{x\}$. Then $|F|=k$, so  by the induction hypothesis, we have
\be
 \sum_{A\subseteq F}g(A)=g(\emptyset)\prod_{A\in\binom{F}{1}}(1+g(A)).
\ee
 On the other hand, any subset of $E$ either contains $x$ or does not, so
\begin{align}
  \sum_{A\subseteq E}g(A)&=\sum_{A\subseteq F}g(A)+\sum_{A\subseteq F}g(A\sqcup\{x\})=\sum_{A\subseteq F}g(A)+\sum_{A\subseteq F}g(A)g(\{x\})\\
  &=(1+g(\{x\}))\sum_{A\subseteq F}g(A)=(1+g(\{x\}))g(\emptyset)\prod_{A\in\binom{F}{1}}(1+g(A))=g(\emptyset)\prod_{A\in\binom{E}{1}}(1+g(A)).
  \nonumber
\end{align}
\end{proof}

\begin{lemma}\label{lem:alpha0}
 Let $G$ et $H$ be two finite subsets of a linearly ordered set, and assume that $p$ is not a root of unity. Then, the product
 \begin{equation}\label{eq:uGH}
 \alpha_{G,H}:=\prod_{A\in\binom{G}{1}}(p^{\theta_{A,H}-\theta_{A,G}}-1)
\end{equation}
does not vanish if and only if  $|H|\ge |G|$, and the map
 \be
 \theta_{\cdot,H}\colon \binom{G}{1}\to \Z_{\ge 0}
 \ee
 is injective and its image is contained in $\Z_{> 0}$.
  \end{lemma}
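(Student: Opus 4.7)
The plan is to reduce the nonvanishing of $\alpha_{G,H}$ to a combinatorial inequality on the ranks of the elements of $G$ inside $H$, and then to establish the equivalence by a short induction.

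First, since $p$ is not a root of unity, each factor $p^{\theta_{A,H}-\theta_{A,G}}-1$ (for $A\in\binom{G}{1}$) vanishes exactly when its exponent is zero. Thus $\alpha_{G,H}\neq 0$ if and only if $\theta_{\{g\},H}\neq\theta_{\{g\},G}$ for every $g\in G$. I would then enumerate $G=\{g_1<g_2<\cdots<g_m\}$ with $m=|G|$; directly from the definition of $\theta$, one has $\theta_{\{g_i\},G}=i-1$, and writing $h_i:=\theta_{\{g_i\},H}$, the inclusion $\{b\in H:b<g_i\}\subseteq\{b\in H:b<g_{i+1}\}$ shows that $(h_i)$ is weakly increasing with $0\leq h_i\leq|H|$. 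The nonvanishing condition then becomes: $h_i\neq i-1$ for every $i\in\{1,\ldots,m\}$.

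The central combinatorial step is the equivalence, for a weakly increasing sequence of nonnegative integers $(h_i)$, between ``$h_i\neq i-1$ for all $i$'' and ``$h_i\geq i$ for all $i$''. The reverse direction is immediate, and the forward direction proceeds by induction: $h_1\neq 0$ together with $h_1\geq 0$ gives $h_1\geq 1$, and if $h_i\geq i$ then $h_{i+1}\geq h_i\geq i$, so $h_{i+1}\neq i$ sharpens to $h_{i+1}\geq i+1$. From $h_i\geq i$ for all $i$ one reads off two of the three listed conditions: $|H|\geq h_m\geq|G|$, and the image of $\theta_{\cdot,H}$ is contained in $\Z_{>0}$ because every $h_i\geq 1$.

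The hard part will be the injectivity clause in the forward direction. The conclusion $h_i\geq i$ does not by itself rule out plateaus $h_i=h_{i+1}$, so injectivity of $\theta_{\cdot,H}$ restricted to $\binom{G}{1}$ does not drop out automatically from the preceding analysis; I would expect to need either a sharper inductive step exploiting the precise relative positions of $G$ and $H$ in the linearly ordered set, or a more refined interpretation of the map in the statement so that its injectivity genuinely encodes the nonvanishing condition. The reverse direction, that $|H|\geq|G|$ together with injectivity and image in $\Z_{>0}$ implies $h_i\geq i$ and hence $\alpha_{G,H}\neq 0$, is routine: a strictly increasing sequence of positive integers automatically satisfies $h_i\geq i$, so every factor $p^{h_i-(i-1)}-1$ is nonzero.
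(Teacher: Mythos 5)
Your reduction is sound and, up to packaging, it is essentially the computation the paper performs: writing $G=\{g_1<\dots<g_m\}$ and $h_i:=\theta_{\{g_i\},H}$, nonvanishing of $\alpha_{G,H}$ is equivalent to $h_i\neq i-1$ for all $i$, and your induction showing that for a weakly increasing nonnegative sequence this is equivalent to $h_i\ge i$ for all $i$ is exactly the needed step. The paper's ``if'' direction is the same observation (a strictly increasing sequence with $h_1>0$ satisfies $h_i>i-1$), and its ``only if'' direction is an induction on $|G|$ that peels off $g=\min(G)$ and $h=\min(H)$ and reduces $\alpha_{G,H}$ to $\alpha_{G\setminus\{g\},H\setminus\{h\}}$.

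The clause you flagged as the hard part is, in fact, not provable: injectivity of $\theta_{\cdot,H}$ on $\binom{G}{1}$ is (together with positivity of the image) sufficient but not necessary for $\alpha_{G,H}\neq0$. Take the ordered set $\{1,2,3,4\}$ with $G=\{3,4\}$, $H=\{1,2\}$: then $\theta_{\{3\},H}=\theta_{\{4\},H}=2$, so the map is not injective, yet $\alpha_{G,H}=(p^{2}-1)(p-1)\neq0$. Your analysis yields the correct necessary and sufficient condition, namely $h_i\ge i$ for all $i$, i.e.\ $\theta_{A,H}>\theta_{A,G}$ for every $A\in\binom{G}{1}$ --- precisely condition \eqref{eq:cond_GH} used in Proposition~\ref{braiding_el} --- together with its consequences $|H|\ge|G|$ and image in $\Z_{>0}$. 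The paper's own proof of the ``only if'' part has a gap at exactly the point you identified: the induction step establishes $\alpha_{G',H'}\neq0$ for $G'=G\setminus\{\min G\}$, $H'=H\setminus\{\min H\}$ and invokes the induction hypothesis, but never verifies that $\theta_{\{\min G\},H}$ differs from $\theta_{\{a\},H}$ for $a\in G'$, and in the example above it does not. So you should not look for a sharper inductive step; the statement has to be weakened by dropping the injectivity requirement (or replacing the three conditions by \eqref{eq:cond_GH}), after which your argument is a complete proof. The same caveat applies to the summation range described after \eqref{eq:BigRmatrix}: for $\dim V\ge4$, e.g.\ $E=G=\{3,4\}$, $F=H=\{1,2\}$, the corresponding coefficient is nonzero although injectivity fails.
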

\begin{proof} When $|G|=0$, we have $G=\emptyset$, so $\alpha_{G,H}=1$. Thus, there are no conditions on $H$, and the statement  holds.

Now, assuming that $|G|>0$, we prove the `only if' part by induction on the size of $G$. Assume that  $ \alpha_{G,H}\ne0$. Suppose that the statement holds for $G':=G\setminus\{g\}$, where $g:=\min(G)$. In this case, we have $\theta_{\{g\},G}=0$, so that
\begin{equation}\label{eq:alpha-rec}
\alpha_{G,H}=(p^{\theta_{\{g\},H}}-1)v_{G',H}
\end{equation}
where
\be
v_{G',H}:=\prod_{A\in\binom{G'}{1}}(p^{\theta_{A,H}-1-\theta_{A,G'}}-1).
\ee
The condition $\alpha_{G,H}\ne0$ implies $\theta_{\{g\},H}>0$. This means $H\neq\emptyset$ and $g>h:=\min(H)$. Thus, for every $A\in\binom{G'}{1}$, we have $\theta_{A,H}=1+\theta_{A,H'}$, where $H':=H\setminus\{h\}$. Therefore
$0\ne v_{G',H}=\alpha_{G',H'}$. This completes  the induction on $|G|$ for the `only if' part.

Now, assuming $|G|>0$, we prove the `if' part. Let $G=\{g_1,g_2,\dots, g_m\}$ where $g_i$ is the $i$-th smallest element of $G$. Then, for any $i\in\{1,\dots,m\}$, we have $\theta_{\{g_i\},G}=i-1$, so that
\begin{equation}
\alpha_{G,H}=(p^{\theta_{\{g_1\},H}}-1)(p^{\theta_{\{g_2\},H}-1}-1)\cdots (p^{\theta_{\{g_m\},H}-m+1}-1).
\end{equation}
By assumption, $\theta_{\{g_1\},H}>0$, and the sequence $(\theta_{\{g_i\},H})_{i=1}^m$ is strictly increasing. This means that, for any  $i\in\{1,\dots,m\}$, $\theta_{\{g_i\},H}>i-1$, so that
$\alpha_{G,H}\ne 0$.
\end{proof}

Consider the operator
\begin{equation}\label{eq:B-operator}
B:= (\nabla\otimes\operatorname{id}_{\Lambda_p(V)})(S\otimes(\Delta\nabla)) (\Delta\otimes\operatorname{id}_{\Lambda_p(V)})=\operatorname{exp}_p(L)(S\otimes \operatorname{id}_{\Lambda_p(V)})\operatorname{exp}_p(R),
\end{equation}
or in the graphical form
\be B=\
\begin{tikzpicture}[baseline=19,yscale=.75]
 \coordinate (t1) at (0,2);
  \coordinate (t2) at (1,2);
   \coordinate (c1) at (0,3/2);
  \coordinate (c2) at (1,4/3);
   \coordinate (c3) at (0,1/2);
    \coordinate (c4) at (1,2/3);
     \coordinate (b1) at (0,0);
    \coordinate (b2) at (1,0);
    \node (a) at (0,1)[antipode]{};
 \path [draw,thick,postaction={on each segment={mid arrow}} ]
 (b1)-- (c3)--(a)--(c1)--(t1)(b2)--(c4)--(c2)--(t2)(c3)--(c4) (c2)--(c1);
\end{tikzpicture}\ .
\ee

For any finite subsets $E$ and $F$ of $\B$, and any $G$ and $H$ such that
\be \label{eq:G-and-H}
G\subseteq E\setminus F, \quad H\subseteq F\setminus E,
\ee
we define
\be\label{eq:dotE-dotF-C}
C:=E\cap F, \quad  \dot E:=(E\setminus F)\setminus G,\quad \dot F:=(F\setminus E)\setminus H
\ee
and
\be\label{eq:Ep-and-Fp}
E':=(E\setminus G)\cup H,\quad F':=(F\setminus H)\cup G.
\ee
This can be visualised via the following Venn diagram
\be\label{Venn}
 \begin{tikzpicture}[xscale=2.5,yscale=2,baseline=-1,x=1pt,y=1pt]
\def\CA{(0,0) ellipse (20 and 10)}
\def\CB{(22,0) ellipse (17 and 12)}
\draw[line width=0.7,fill=yellow!10] \CB node at (25,16) {$F$};
          \begin{scope}
            \clip \CA;\fill[black,opacity=0.5] \CB ;
        \end{scope}
\draw[line width=0.7,fill=green!20,opacity=0.6] \CA node at (-5,14) {$E$};
\draw[fill=blue!25,dashed] (-8,-1) circle [radius=4] node {$G$}  ;
\draw[fill=red!25,dashed] (28,-3) circle [radius=5] node {$H$};
\node at (12,0) {$C$};\node at (0,5) {$\dot E$};\node at (23,7) {$\dot F$};
    \end{tikzpicture}\ .
\ee
\begin{lemma}\label{lem:betaEGFH}
Let coefficients $\beta_{E,G;F,H}$ be defined through the formula
\begin{equation}
Bf_{E,F}=(-1)^{|E||F|}\sum_{G,H}\beta_{E,G;F,H}f_{F',E'},
\end{equation}
where operator $B$ is defined in~\eqref{eq:B-operator}, $G$ and $H$ satisfy~\eqref{eq:G-and-H}, and  $E'$ and $F'$ are given in~\eqref{eq:Ep-and-Fp}. Then,
\begin{equation}\label{eq:betaEGFH}
\beta_{E,G;F,H}= (-1)^{\theta_{F,E}+\theta_{F',E'}} p^{\theta_{G\sqcup C,E}+\theta_{\dot F,E'}}\alpha_{G,H},
\end{equation}
where $C$ and $\dot F$ are defined in~\eqref{eq:dotE-dotF-C} and
$\alpha_{G,H}$ is defined in~\eqref{eq:uGH}.
\end{lemma}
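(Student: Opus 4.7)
The plan is to evaluate $Bf_{E,F}$ using the factorisation $B=\operatorname{exp}_p(L)(S\otimes\operatorname{id})\operatorname{exp}_p(R)$ recorded in~\eqref{eq:B-operator}, express each operator through the structure constants of Theorem~\ref{thm:1}, and then identify the coefficient of $f_{F'}\otimes f_{E'}$ term by term against the desired formula.

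Expanding $R^{\langle k\rangle_p}=(\operatorname{id}\otimes\nabla)(\Delta_{(k}\otimes\operatorname{id})$ via the coproduct and product of Theorem~\ref{thm:1} gives
\[
\operatorname{exp}_p(R)f_{E,F}=\sum_{D'\subseteq E\setminus F}(-p)^{\theta_{E\setminus D',D'}}(-1)^{\theta_{D',F}}f_{E\setminus D'}\otimes f_{D'\cup F}.
\]
Applying $S\otimes\operatorname{id}$ inserts the factor $\gauss{|E|-|D'|}$, and applying $\operatorname{exp}_p(L)$ expands $f_{D'\cup F}$ via the coproduct with degree projection on the first tensorand and then merges that tensorand into $f_{E\setminus D'}$; this produces a double sum indexed by pairs $(D',J)$ with $J\subseteq D'\cup F$ and $(E\setminus D')\cap J=\emptyset$. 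Fixing the output tensor $f_{F'}\otimes f_{E'}$, the disjoint-union partition $E\cup F=\dot E\sqcup C\sqcup G\sqcup H\sqcup\dot F$ yields, by a short set-theoretic analysis, the unique parameterisation $D'=\dot E\sqcup D$ and $J=\dot F\sqcup D$ with $D$ ranging over subsets of $G$.

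Separately, expanding the product in the definition~\eqref{eq:uGH} of $\alpha_{G,H}$ by distributivity gives
\[
\alpha_{G,H}=\sum_{D\subseteq G}(-1)^{|G|-|D|}p^{\theta_{D,H}-\theta_{D,G}}.
\]
The claim then reduces to a termwise identity indexed by $D\subseteq G$: for each $D$, the contribution from the double sum above must coincide with the corresponding summand of $(-1)^{|E||F|}$ times the right-hand side of~\eqref{eq:betaEGFH}, with $\alpha_{G,H}$ replaced by the displayed expansion. This termwise identity is verified by expanding every $\theta$-symbol via additivity along the five-part partition of $E\cup F$, splitting further $G=D\sqcup(G\setminus D)$, and using the symmetry $\theta_{A,B}+\theta_{B,A}=|A||B|$ for disjoint $A,B$ from Remark~\ref{rem:mAB+mBA}, together with $\theta_{A,A}=|A|(|A|-1)/2$ and the expansion $\gauss{|E|-|D'|}=(-1)^{|E|-|D'|}p^{(|E|-|D'|)(|E|-|D'|-1)/2}$. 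After collecting, the $p$-exponent difference cancels identically, and the sign-exponent difference reduces modulo~$2$ to an even combination that vanishes.

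The main obstacle is purely combinatorial bookkeeping: each side expands into roughly a dozen $\theta$-contributions together with quadratic cardinality terms from $\gauss{|E|-|D'|}$. The decisive point is the choice of parameterisation: the factorisation via $\operatorname{exp}_p(R)$ naturally pairs the inner summation variable $D\subseteq G$ with the distributivity expansion of $\alpha_{G,H}$, whereas the direct formula $(\nabla\otimes\operatorname{id})(S\otimes\Delta\nabla)(\Delta\otimes\operatorname{id})$ produces the reverse parameterisation $G\setminus D$, yielding the same total sum but failing termwise matching.
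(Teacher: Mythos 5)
Your proposal is correct and is essentially the paper's own argument: after the same set-theoretic analysis forcing the parameterisation by $D\subseteq G$ (the paper's $A$), you obtain exactly the paper's single sum of structure-constant contributions, and your termwise matching against the distributivity expansion of $\alpha_{G,H}$ is just the inverse of the paper's final step, which resums $\sum_{A\subseteq G}(-1)^{|A|}p^{\theta_{A,H}-\theta_{A,G}}$ into the product via Lemma~\ref{lem:sum-to-product}. Only your closing remark is off: the factorised expression $\operatorname{exp}_p(L)(S\otimes\operatorname{id})\operatorname{exp}_p(R)$ and the direct formula in~\eqref{eq:B-operator} are the same composition, so neither forces a particular labelling of the summation variable, and with the ``reverse'' labelling one would simply match the term indexed by $D$ with the term indexed by $G\setminus D$ in the expansion of $\alpha_{G,H}$.
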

\begin{proof}
The contributions to $\beta_{E,G;F,H}$ come from diagrams of the form
\be\label{proofd}
\begin{tikzpicture}[scale=1,baseline=13,x=1pt,y=1pt]
\node at (39,23) {\tiny  $d$};
\node at (39,8) {\tiny  $b$};
\node at (-3,30) {\tiny  $c$};
\node at (-3,0) {\tiny  $a$};
\path [draw,thick,postaction={on each segment={end  arrow}}]
 (0,-10) -- (0,-3)  node[near start,left] {\tiny  $E$}
(35,-10) -- (35,0)  node[near start,right] {\tiny  $F$};
\path [draw,thick,postaction={on each segment={mid  arrow}}]
(0,-3) -- (0,33)
 (35,22) -- (0,30)
 (0,33) -- (0,40) node[near end,left] {\tiny  $F'$}
 (35,0) -- (35,30)
 (0,0) -- (35,8)
 (35,30) -- (35,40) node[near end,right] {\tiny  $E'$};
\end{tikzpicture},
\ee
with  internal lines labelled with various subsets of $E\cup F$.

 According to the Venn diagram \eqref{Venn}, we can represent the sets entering~\eqref{proofd} as
\be
E=C\sqcup G\sqcup \dot E,\quad F=C\sqcup H \sqcup \dot F,\quad
E'=C\sqcup H\sqcup \dot E,\quad F'=C\sqcup G \sqcup \dot F.
\ee
We also label the nodes of the diagram \eqref{proofd} by the letters $a,b,c,d$ and consider how different
elements of the sets $E$ and $F$ can move up along the paths of the diagram to make a nontrivial contribution to $\beta_{E,G;F,H}$.

Any element of $\dot E$ must end up in $E'$. This means that the  whole subset $\dot E$
can only move along paths $ab$ and $bd$.
A set $A\subset E\setminus \dot E=G\sqcup C$ of other elements of $E$ that move along  path $ab$ cannot contain elements of $C$. Indeed, an element $x\in C$ moving along $ab$ would meet
 the same element $x\in F$ and the resulting contribution would vanish. Thus, $A$ can be any subset of $G$.

With these observations, we write
\be\label{proofd1}
(-1)^{|E||F|}\beta_{E,G;F,H}=\sum_{A\subseteq G}\gamma_{|G\setminus A|+|C|}\
\begin{tikzpicture}[scale=1,baseline=13,x=1pt,y=1pt]
\path [draw,thick,postaction={on each segment={mid  arrow}}]
(0,-3) -- (0,33) node[midway,left] {\tiny $(G\setminus A)\sqcup C$}
 (45,22) -- (0,30) node[midway,above] {\tiny $A\sqcup \dot F$}
 (0,33) -- (0,40) node[near end,left] {\tiny $ F'$}
 (45,0) -- (45,30) node[midway,right]
{\tiny $\dot E \sqcup A\sqcup F$}
(0,0) -- (45,8) node[midway,above] {\tiny $\dot E \sqcup A$}
 (45,30) -- (45,40) node[near end,right] {\tiny $ E'$};
\path [draw,thick,postaction={on each segment={ end arrow}}]
(0,-10) -- (0,-3)  node[near start,left] {\tiny  $ E$}
(45,-10) -- (45,0)  node[near start,right] {\tiny $ F$};
\end{tikzpicture}\ .
\ee
Using expressions for the product \eqref{eq:prod-comb} and for the coproduct
 \eqref{eq:comult-comb}, we can rewrite~\eqref{proofd1} as follows
\begin{align}
 \sum_{A\subseteq G}\gamma_{|G\setminus A|+|C|}&(-1)^{\theta_{(G\setminus A)\sqcup C,A\sqcup \dot F}+
\theta_{\dot E \sqcup A,F}}
(-p)^{\theta_{(G\setminus A)\sqcup C,\dot E\sqcup A}+
\theta_{A\sqcup \dot F,E'}}\\
=&(-1)^{\theta_{G\sqcup C,\dot E\sqcup \dot F}+\theta_{\dot E,F}+\theta_{\dot F,E'}}\sum_{A\subseteq G}\gamma_{|G\setminus A|+|C|}
p^{\theta_{(G\setminus A)\sqcup C, \dot E\sqcup A}+\theta_{A\sqcup \dot F, E'}}\nonumber\\
=&(-1)^{\theta_{G\sqcup C,\dot E\sqcup \dot F}+\theta_{\dot E,F}+\theta_{\dot F,E'}+|G|+|C|}\sum_{A\subseteq G}(-1)^{|A|}
p^{\theta_{(G\setminus A)\sqcup C,E}+\theta_{A\sqcup \dot F,E'}}\nonumber\\
=&(-1)^{\theta_{G\sqcup C,\dot E\sqcup \dot F}+\theta_{\dot E,F}+\theta_{\dot F,E'}+|G|+|C|}p^{\theta_{G\sqcup C,E}+\theta_{\dot F, E'}}\sum_{A\subseteq G}(-1)^{|A|}
p^{\theta_{A,H}-\theta_{A ,G}}\nonumber\\
=&(-1)^{\theta_{G\sqcup C,\dot E\sqcup \dot F}+\theta_{\dot E,F}+\theta_{\dot F,E'}+|C|}p^{\theta_{G\sqcup C,E}+\theta_{\dot F, E'}}\prod_{A\in \binom{G}{1}}(
p^{\theta_{A,H}-\theta_{A ,G}}-1),\nonumber
\end{align}
where, in the last equality, we use Lemma~\ref{lem:sum-to-product} for converting the sum over $A$ into a product,
so that
\begin{align}
 \beta_{E,G;F,H}=&(-1)^{\theta_{E,F}+\theta_{F,E}+\theta_{G\sqcup C,\dot E\sqcup \dot F}+\theta_{\dot E,F}+\theta_{\dot F,E'}}p^{\theta_{G\sqcup C,E}+\theta_{\dot F, E'}}\alpha_{G,H}\\
 =&(-1)^{\theta_{F,E}+\theta_{F',E'}}p^{\theta_{G\sqcup C,E}+\theta_{\dot F, E'}}\alpha_{G,H}.\nonumber
\end{align}
\end{proof}

\begin{proposition}\label{braiding_el}
For $G\in\binom{E\setminus F}{k}$,  $H\in\binom{F\setminus E}{k}$, we have the following formula for the coefficient in~\eqref{eq:s-coeff}:
\begin{equation}\label{eq:sEGFH}
s_{E,G;F,H}=  \beta_{E,G;F,H},
\end{equation}
which does not vanish for generic $p$ if and only if
\begin{equation}\label{eq:cond_GH}
\theta_{A,H}> \theta_{A,G},\quad  \forall A\in \binom{G}{1}.
\ee
\end{proposition}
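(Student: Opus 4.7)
The plan is to identify $\hat\tau f_{E,F}$ with the degree-swapping component of $Bf_{E,F}$, where $B$ is the auxiliary operator from~\eqref{eq:B-operator}, and then to invoke Lemma~\ref{lem:betaEGFH} to read off the coefficients. Starting from the MOY formula of Proposition~\ref{prop:braiding-MOY},
\[
\hat\tau f_{E,F} = \sum_{k=0}^{\min(|E|,|F|)} \gauss{k}\, L^{\langle|F|-k\rangle_p} R^{\langle|E|-k\rangle_p} f_{E,F},
\]
I would note that $R^{\langle|E|-k\rangle_p}$ sends $f_{E,F} \in \Lambda_p^{|E|}(V)\otimes\Lambda_p^{|F|}(V)$ into $\Lambda_p^{k}(V)\otimes\Lambda_p^{|E|+|F|-k}(V)$, so that by~\eqref{eq:antipode-comp} the antipode on the first tensor factor produces precisely the scalar $\gauss{k}$. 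Hence each summand above rewrites as
\[
L^{\langle|F|-k\rangle_p}(S\otimes\operatorname{id}_{\Lambda_p(V)})R^{\langle|E|-k\rangle_p} f_{E,F},
\]
and summing over $k$ expresses $\hat\tau f_{E,F}$ as $(\pi_{|F|}\otimes\pi_{|E|}) B f_{E,F}$.

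Next, Lemma~\ref{lem:betaEGFH} gives $Bf_{E,F}=(-1)^{|E||F|}\sum_{G,H}\beta_{E,G;F,H}\,f_{F',E'}$, where $G\subseteq E\setminus F$, $H\subseteq F\setminus E$ and $F',E'$ are as in~\eqref{eq:Ep-and-Fp}. Since $f_{F',E'}\in\Lambda_p^{|F|-|H|+|G|}(V)\otimes\Lambda_p^{|E|-|G|+|H|}(V)$, the projection $\pi_{|F|}\otimes\pi_{|E|}$ singles out exactly the pairs with $|G|=|H|=k$. Comparing this with the definition~\eqref{eq:s-coeff} of $\tau_k f_{E,F}$ identifies $s_{E,G;F,H}=\beta_{E,G;F,H}$, which is~\eqref{eq:sEGFH}.

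For the non-vanishing criterion, the explicit formula~\eqref{eq:betaEGFH} shows that, for generic $p$, $\beta_{E,G;F,H}\neq 0$ iff $\alpha_{G,H}\neq 0$. Applying Lemma~\ref{lem:alpha0} in the equal-cardinality case $|G|=|H|=k$, $\alpha_{G,H}\neq 0$ is equivalent to the map $\{g\}\mapsto\theta_{\{g\},H}$ on $\binom{G}{1}$ being injective with image in $\Z_{>0}$. Enumerating $G=\{g_1<\cdots<g_k\}$ so that $\theta_{\{g_i\},G}=i-1$, the injective-and-positive condition forces $\theta_{\{g_1\},H}\geq 1$ and $\theta_{\{g_i\},H}<\theta_{\{g_{i+1}\},H}$, and hence $\theta_{\{g_i\},H}\geq i > i-1=\theta_{\{g_i\},G}$ for all $i$; this is exactly condition~\eqref{eq:cond_GH}.

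The only delicate bookkeeping step is the first one, where the $\gauss{k}$ factors coming from the MOY expansion of $\hat\tau$ in Proposition~\ref{prop:braiding-MOY} must be matched with the contributions of the antipode $S$ sitting inside $B$; this is a direct consequence of degree preservation along the strands (cf.\ Remark~\ref{rem:pres-degree}) and the diagonal action~\eqref{eq:antipode-comp} of $S$, so no genuine obstacle arises.
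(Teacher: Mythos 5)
Your identification $s_{E,G;F,H}=\beta_{E,G;F,H}$ is correct and follows exactly the paper's route: you realise $\hat\tau f_{E,F}$ as the degree-swapping block $(\pi_{|F|}\otimes\pi_{|E|})Bf_{E,F}$ of the operator $B=\operatorname{exp}_p(L)(S\otimes \operatorname{id})\operatorname{exp}_p(R)$, matching the factors $\gauss{k}$ of Proposition~\ref{prop:braiding-MOY} with the antipode acting on the degree-$k$ strand via~\eqref{eq:antipode-comp}, and then read the coefficients off Lemma~\ref{lem:betaEGFH}; the paper's own proof is a compressed version of precisely this bookkeeping.

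The non-vanishing criterion, however, is only half proved. Granting Lemma~\ref{lem:alpha0}, you argue that $s_{E,G;F,H}\ne 0$ forces $\theta_{\cdot,H}$ to be injective with positive values, and that this implies~\eqref{eq:cond_GH}; but the Proposition is an ``if and only if'', and the converse direction --- that~\eqref{eq:cond_GH} implies $s_{E,G;F,H}\ne 0$ --- is nowhere addressed. It cannot be obtained by reversing your chain, because~\eqref{eq:cond_GH} does \emph{not} imply injectivity of $\theta_{\cdot,H}$: take $G=\{3,4\}$, $H=\{1,2\}$ (realised, e.g., with $E=G$, $F=H$, $k=2$); then $\theta_{\{3\},H}=\theta_{\{4\},H}=2$, so the map is constant, yet $\theta_{\{3\},H}=2>0=\theta_{\{3\},G}$ and $\theta_{\{4\},H}=2>1=\theta_{\{4\},G}$, so~\eqref{eq:cond_GH} holds and indeed $\alpha_{G,H}=(p^2-1)(p-1)\ne0$ for generic $p$ (the same example shows that the injectivity claim in the ``only if'' half of Lemma~\ref{lem:alpha0}, on which your forward implication leans, fails as literally stated, although the paper cites it in the same way). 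The missing direction is immediate from~\eqref{eq:uGH}: under~\eqref{eq:cond_GH} every exponent $\theta_{A,H}-\theta_{A,G}$ is a strictly positive integer, so each factor $p^{\theta_{A,H}-\theta_{A,G}}-1$ is nonzero when $p$ is not a root of unity, whence $\alpha_{G,H}\ne0$ and, by~\eqref{eq:betaEGFH}, $s_{E,G;F,H}\ne0$. If you also want a lemma-independent forward direction, note that if $\theta_{\{g_i\},H}\le i-1=\theta_{\{g_i\},G}$ for some $i$, then the sequence $j\mapsto\theta_{\{g_j\},H}-(j-1)$ starts $\ge0$ at $j=1$, is $\le0$ at $j=i$, and drops by at most one at each step, hence vanishes for some $j$, killing a factor of $\alpha_{G,H}$; adding these two observations closes the argument.
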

\begin{proof}
Equality~\eqref{eq:sEGFH} immediately follows from Proposition~\ref{prop:braiding-MOY}, the definition of the operator $B$ in~\eqref{eq:B-operator}, and Lemma~\ref{lem:betaEGFH}. Conditions~\eqref{eq:cond_GH} follow from Lemma~\ref{lem:alpha0}.
\end{proof}
\begin{remark}
As already mentioned, the simplest form of~\eqref{eq:sEGFH} corresponds to the (elastic) scattering channel~\eqref{eq:0-channel}. In this case, we obtain
\begin{equation}
s_{E,F}=p^{\theta_{F,E}}.
\end{equation}
\end{remark}

\begin{remark}
The equations
 \be
 \tau_k f_{E,F}=0, \quad \forall k>0,
 \ee
 hold if and only if $\theta_{E\setminus F,F\setminus E}=0$.
\end{remark}

\section{Generalised Yetter--Drinfel'd module structures and \texorpdfstring{$R$-matrices}{Lg}} \label{sec:3}
Following the general approach of~\cite{GaroufalidisKashaev2023} to the construction of $R$-matrices from braided Hopf algebras with automorphisms, we observe that for any nonzero scalar $t\in \lcf_{\ne0}$, there exists a diagonal automorphism $\phi_t$ of $\Lambda_p(V)$, which acts on elements of $V$ by multiplication by $t$. This means that it acts on the set-theoretic basis according to the formula
 \begin{equation}\label{eq:diag-aut}
 \phi_tf_E=t^{|E|}f_E,\quad \forall E\in \fP(\B),
\end{equation}
with the following diagrammatic representation:
\be\phi_t=\
\begin{tikzpicture}[baseline=12]
 \node (f) at (0,1/2) [autom]{} ;
 \path[draw,thick,postaction={on each segment={mid arrow}}]
 (0,0)--(f)--(0,1);
\end{tikzpicture}\ .
\ee
In the particular case with $t=p$, this automorphism has already been used in Section~\ref{sec:5}, see definition~\eqref{eq:diag-aut-t=p}.

Associated with this automorphism, there exists a right generalised Yetter--Drinfel'd module structure on $\Lambda_p(V)$ given by the coproduct as the right coaction, and the right action
\begin{equation}\label{eq:r-action}
\lambda_R=\nabla^{(2)}((S\phi_t)\otimes\operatorname{id}_{\Lambda_p(V)\otimes \Lambda_p(V)})(\hat\tau\otimes\operatorname{id}_{\Lambda_p(V)})(\operatorname{id}_{\Lambda_p(V)}\otimes \Delta)\colon  \Lambda_p(V)\otimes \Lambda_p(V)\to\Lambda_p(V),
\end{equation}
with the graphical description
\be\label{faction}
\lambda_R=
\begin{tikzpicture}[xscale=2,yscale=1.7,baseline=12,x=15pt,y=15pt]
\node (d) at (0,-1+1.6)[coact]{};
\coordinate (t1) at (-.5,-2+1.7);
\coordinate (t2) at (.5,-2+1.7);
\coordinate (b) at (0,0+1.7);
\path [draw,thick,postaction={on each segment={mid arrow}}]
(d) -- node [near end,right] {
} (b) ;
\path [draw,thick,postaction={on each segment={mid  arrow}}]
(t1)  to  [in=-135,out=90] node [near start,left] {
} (d)
(t2) to [in=-45,out=90] node [near start,right] {
} (d);
\end{tikzpicture}
=
\begin{tikzpicture}[xscale=.6,yscale=.6,baseline=8,x=4.5pt,y=4.5pt]
\coordinate (t1) at (5,-5);
\coordinate (t2) at (-5,-5);
\coordinate (t3) at (5,7);
\node (s) at (-5,4)[antipode]{} ;
\node (f) at (-5,7)[autom]{};
\coordinate (pr) at (5,-2);
\coordinate (cp) at (0,12);
\coordinate (b) at (0,15);
\path [draw,thick,postaction={on each segment={mid arrow}}]
(pr) to  [in=-45,out=45]  (t3) (pr) to  [in=-90,out=135] (s);
\draw[line width=3pt, white] (t2) to  [out=90,in=-135]  (4.5,6.5);
\path [draw,thick,postaction={on each segment={mid  arrow}}]
(f) to[in=-150,out=90] (cp) (t3) to [in=-30,out=90] (cp)
(t1) -- (pr) (cp) --(b) (t2) to [out=90, in=-135] (t3);
\draw[thick] (f)--(s);
\end{tikzpicture},
\ee
and the corresponding $R$-matrix
\begin{equation}\label{eq:r-mat}
\rho=(\phi_t\otimes\lambda_R)(\hat\tau\otimes\operatorname{id})
(\operatorname{id}\otimes\Delta)\colon\Lambda_p(V)\otimes\Lambda_p(V)\to\Lambda_p(V)\otimes\Lambda_p(V),
\end{equation}
with the graphical description
\begin{equation}\label{eq:r-mat-graph}
\rho=\begin{tikzpicture}[baseline=22]
\coordinate (t1) at (0,2);
\coordinate (t2) at (1,2);
\coordinate (b1) at (0,0);
\coordinate (b2) at (1,0);
\node (f) at (0, 3/2)[autom]{};
\coordinate (cac) at (1,1/2);
\node (ac) at (1,3/2)[coact]{};
\draw[thick, start arrow] (cac) to [out=135,in=-90] (f);
\draw[thick, mid arrow] (f)--(t1);
\draw[thick, mid arrow] (ac)--(t2);
\draw[thick, mid arrow] (b2)--(cac);
\draw[line width=3pt,white] (b1) to [out=90,in=-135] (ac);
\draw[thick, start arrow] (b1) to [out=90,in=-135] (ac);
\draw[thick, mid arrow] (cac) to [out=45,in=-45] (ac);
\end{tikzpicture}.
\end{equation}

In what follows, we use the notation
$x\triangleleft y:=\lambda_R(x\otimes y)$.
\begin{proposition}
The right action~\eqref{eq:r-action}, \eqref{faction} of $\Lambda_p(V)$ on itself is given by
 \begin{equation}\label{eq:right-action-general}
 f_E\triangleleft f_F=f_Ef_F (tp^{|E|};p)_{|F|},\quad \forall E,F\in\fP(\B).
\end{equation}
\end{proposition}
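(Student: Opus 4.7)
The plan is to unfold the graphical definition of $\lambda_R$ on the basis, reduce everything through a single clean identity for $\nabla\circ\hat\tau$, and then recognise what remains as a $q$-binomial expansion.

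First I would apply $\operatorname{id}\otimes\Delta$ to $f_E\otimes f_F$ via the coproduct formula~\eqref{eq:comult-comb}, turning $\lambda_R(f_E\otimes f_F)$ into
\[
\sum_{A\subseteq F}(-p)^{\theta_{A,F\setminus A}}\,\nabla^{(2)}\bigl((S\phi_t\otimes\operatorname{id}^{\otimes 2})(\hat\tau(f_E\otimes f_A)\otimes f_{F\setminus A})\bigr).
\]
By Remark~\ref{rem:pres-degree}, the first tensor factor of $\hat\tau(f_E\otimes f_A)$ lies entirely in $\Lambda_p^{|A|}(V)$, on which $S$ acts as $\gauss{|A|}$ (Theorem~\ref{thm:1}) and $\phi_t$ as $t^{|A|}$ (definition~\eqref{eq:diag-aut}). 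Hence $S\phi_t$ collapses to the scalar $\gauss{|A|}t^{|A|}$, and by associativity of $\nabla^{(2)}$ the expression simplifies to
\[
\lambda_R(f_E\otimes f_F)=\sum_{A\subseteq F}(-p)^{\theta_{A,F\setminus A}}\gauss{|A|}t^{|A|}\bigl(\nabla\hat\tau(f_E\otimes f_A)\bigr)\,f_{F\setminus A}.
\]

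The core step I would then establish is the identity $\nabla\hat\tau(x\otimes y)=p^{mn}\,xy$ for $x\in\Lambda_p^m(V)$ and $y\in\Lambda_p^n(V)$, which I derive from the antipode--braiding relation~\eqref{eq:antipode-braiding}, $S\nabla=\nabla\hat\tau(S\otimes S)$, combined with the diagonal action of $S$ on homogeneous components: both sides vanish when $E\cap A\ne\emptyset$, while on disjoint $E,A$ the claim reduces to the scalar ratio $\gauss{|E|+|A|}/(\gauss{|E|}\gauss{|A|})=p^{|E||A|}$. Substituting this together with $f_Af_{F\setminus A}=(-1)^{\theta_{A,F\setminus A}}f_F$ from~\eqref{eq:prod-comb} causes the sign factors to cancel and leaves
\[
\lambda_R(f_E\otimes f_F)=f_Ef_F\sum_{A\subseteq F}p^{\theta_{A,F\setminus A}}\gauss{|A|}(tp^{|E|})^{|A|}.
\]
Finally I would group the sum by $k=|A|$, apply Lemma~\ref{lem:qbin} to evaluate $\sum_{A\in\binom{F}{k}}p^{\theta_{A,F\setminus A}}=\qbinom{|F|}{k}{p}$, and invoke the $q$-binomial formula~\eqref{eq:qbinom} to recognise the resulting sum as $(tp^{|E|};p)_{|F|}$, which yields the claim.

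The main obstacle is really just the core identity of the previous paragraph; once one sees that $\nabla\circ\hat\tau$ collapses to a scalar multiple of $\nabla$ on each pair of homogeneous components, the remainder is routine $q$-binomial bookkeeping. An alternative route to the same scalar would be via the MOY description of $\hat\tau$ in Proposition~\ref{prop:braiding-MOY} followed by the bubble identity~\eqref{eq:bubleMOY}, but the antipode-based derivation is cleaner because it avoids summing explicitly over the MOY channels.
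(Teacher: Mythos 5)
Your argument is correct, and it reaches \eqref{eq:right-action-general} by a genuinely different route than the paper. The paper stays inside the MOY diagrammatic calculus: it composes $\lambda_R$ with $\pi_m\otimes\pi_n$, expands the coaction into channels weighted by $t^k\gauss{k}$, uses the antipode--braiding identity in the diagrammatic form~\eqref{braiding-tr}, removes bubbles via~\eqref{eq:bubleMOY}, and is then left with a double sum $u_{m,n}=\sum_k t^k\gauss{k}v_{k,m,n}$ that it evaluates by a $q$-Vandermonde summation before the final $q$-binomial step. You instead work directly in the set-theoretic basis and isolate the one fact that is really needed: since only the composite $\nabla\circ\hat\tau$ enters, the relation $S\nabla=\nabla\hat\tau(S\otimes S)$ from~\eqref{eq:antipode-braiding} together with the diagonal antipode~\eqref{eq:antipode-comp} gives $\nabla\hat\tau=p^{mn}\nabla$ on $\Lambda_p^m(V)\otimes\Lambda_p^n(V)$ (the scalar being $\gauss{m+n}/(\gauss{m}\gauss{n})=p^{mn}$, and both sides vanishing together when the product is zero); after that, the sign cancellation with $f_Af_{F\setminus A}=(-1)^{\theta_{A,F\setminus A}}f_F$, Lemma~\ref{lem:qbin}, and the $q$-binomial formula~\eqref{eq:qbinom} finish the computation in one pass. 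Your route is shorter and more elementary: it bypasses Proposition~\ref{prop:braiding-MOY}, the channel decomposition, and the Vandermonde identity entirely, at the cost of relying on the explicit structure constants of Theorem~\ref{thm:1} (note there is no circularity, since the antipode formula is established there independently). What the paper's diagrammatic derivation buys in exchange is a graded, basis-free computation whose intermediate identities (notably~\eqref{braiding-tr} and the bubble removal) are reused essentially verbatim in the proof of Theorem~\ref{thm:big-R-matrix}, so the two proofs are complementary rather than one subsuming the other.
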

\begin{proof}
Composing with $\pi_m\otimes\pi_n$, we have
\begin{equation}
\lambda_R(\pi_m\otimes\pi_n)=
\begin{tikzpicture}[scale=0.7,baseline=12,x=4.5pt,y=4.5pt]
\coordinate (t1) at (5,-5);
\coordinate (t2) at (-5,-5);
\coordinate (t3) at (5,7);
\node (s) at (-5,3.5)[antipode]{} ;
\node (f) at (-5,7)[autom]{};
\coordinate (pr) at (5,-2);
\coordinate (cp) at (0,12);
\coordinate (b) at (0,15);
\path [draw,thick,postaction={on each segment={mid arrow}}]
(pr) to  [in=-45,out=45]  (t3) (pr) to  [in=-90,out=135] (s);
\draw[line width=3pt, white] (t2) to  [out=90,in=-135]  (4.5,6.5);
\path [draw,thick,postaction={on each segment={mid  arrow}}]
(f) to[in=-150,out=90] (cp) (t3) to [in=-30,out=90] (cp)
(t1) -- (pr) (cp) --(b) (t2) to [out=90, in=-135] (t3);
\draw[thick] (f)--(s);
\node at (-6.5,-4)  {\tiny$m$};
\node at (6.5,-4)  {\tiny$n$};
\node at (4,14) {
};
\end{tikzpicture}\
= \sum_{k=0}^n t^k\gauss{k}\
\begin{tikzpicture}[scale=0.7,baseline=12,x=4.5pt,y=4.5pt]
\coordinate (t1) at (5,-5);
\coordinate (t2) at (-5,-5);
\coordinate (t3) at (5,7);
\coordinate (s)  at (-5,3.5);
\coordinate (f) at (-5,7);
\coordinate (pr) at (5,-2);
\coordinate (cp) at (0,12);
\coordinate (b) at (0,15);
\path [draw,thick,postaction={on each segment={mid arrow}}]
(pr) to  [in=-45,out=45]  (t3) (pr) to  [in=-90,out=135] (s);
\draw[line width=3pt, white] (t2) to  [out=90,in=-135]  (4.5,6.5);
\path [draw,thick,postaction={on each segment={mid  arrow}}]
(f) to[in=-150,out=90] (cp) (t3) to [in=-30,out=90] (cp)
(t1) -- (pr) (cp) --(b) (t2) to [out=90, in=-135] (t3);
\draw[thick] (f)--(s)node [midway,left] {\tiny $k$};
\node at (-6.5,-4)  {\tiny $m$};
\node at (6.5,-4)  {\tiny$n$};
\node at (4,14) {
};
\end{tikzpicture}\ ,
\end{equation}
where, in the second equality, we use the fact that the braiding $\hat\tau$ preserves the degree along the strands.
Using the identity
\be\label{braiding-tr}
\begin{tikzpicture}[scale=1.5,baseline=20,x=1pt,y=1pt]
\path [draw,thick,postaction={on each segment={mid end arrow}}]
(20,5) --(0,25) ;
\draw[line width=3pt, white] (5,10) -- (15,20);
\path [draw,thick,postaction={on each segment={mid end arrow}}]
(0,5) -- (20,25)  ;
\path [draw,thick,postaction={on each segment={mid  arrow}}]
(20,5)--(20,25) to[out=45,in=-90] (23,30) (23,0) to[in=-45,out=90] (20,5);
\draw[thick] (-3,30) to[out=-90,in=-45] (0,25) (-3,0) to[out=90,in=-135](0,5);
\end{tikzpicture}
\ =\
\begin{tikzpicture}[scale=1.5,baseline=20,x=1pt,y=1pt]
\node at (0,15) [antipode]{};
\path [draw,thick,postaction={on each segment={mid  arrow}}]
(0,25) to[in=-90,out=-45] (-3,30) (-3,0) to[out=90,in=-135] (0,5)
(0,16.3)--(0,25) (20,5)--(10,10)--(10,20)--(0,25) (10,20)--(20,25) (0,5)--(10,10) ;
\draw[thick]  (20,5) to [out=-45,in=90] (23,0)
(0,5) -- (0,13.7)   (20,25) to [out=45,in=-90] (23,30);
\end{tikzpicture},
\ee
we also have
\be
\begin{tikzpicture}[scale=0.7,baseline=12,x=4.5pt,y=4.5pt]
\coordinate (t1) at (5,-5);
\coordinate (t2) at (-5,-5);
\coordinate (t3) at (5,7);
\coordinate (s)  at (-5,3.5);
\coordinate (f) at (-5,7);
\coordinate (pr) at (5,-2);
\coordinate (cp) at (0,12);
\coordinate (b) at (0,15);
\path [draw,thick,postaction={on each segment={mid arrow}}]
(pr) to  [in=-45,out=45]  (t3) (pr) to  [in=-90,out=135] (s);
\draw[line width=3pt, white] (t2) to  [out=90,in=-135]  (4.5,6.5);
\path [draw,thick,postaction={on each segment={mid  arrow}}]
(f) to[in=-150,out=90] (cp) (t3) to [in=-30,out=90] (cp)
(t1) -- (pr) (cp) --(b) (t2) to [out=90, in=-135] (t3);
\draw[thick] (f)--(s)node [midway,left] {\tiny $k$};
\node at (-6.5,-4)  {\tiny $m$};
\node at (6.5,-4)  {\tiny$n$};
\node at (4,14) {
};
\end{tikzpicture}
\ = \
\begin{tikzpicture}[scale=0.7,baseline=12,x=4.5pt,y=4.5pt]
\coordinate (t1) at (5,-5);
\coordinate (t2) at (-5,-5);
\coordinate (t3) at (5,6);
\coordinate (t4) at (-4,3.5);
\coordinate (t5) at (-3.5,-2.7);
\coordinate (t6) at (0,2);
\coordinate (t7) at (0,-1);
\node (s1) at (-5,0.1) [antipode]{};
\coordinate (s) at (-5,5);
\coordinate (f) at (-5,7);
\coordinate (pr) at (5,-2);
\coordinate (cp) at (0,12);
\coordinate (b) at (0,15);
\path [draw,thick,postaction={on each segment={mid  arrow}}]
(t7) -- (t6) (f) to[in=-150,out=90] (cp) (t3) to  [in=-30,out=90]  (cp)
(t1) to [out=90,in=-20] (t7) (cp) -- (b) (t5) to [out=45,in=-160] (t7)
 (s1) to [out=90,in=-135] (t4)
 (t6) to  [in=-90,out=150] (s)
(t2) to [out=90, in=-135] (t5);
\draw[thick]   (s1) to [out=-90,in=135] (t5);
\draw[thick]  (t3) to[out=-90,in=30] (t6) (f)--(s)node [midway,left] {\tiny $k$};
\node at (-6.5,-4)  {\tiny $m$};
\node at (6.5,-4)  {\tiny $n$};
\end{tikzpicture}
\ =
\sum_{i=0}^{\min(k,m)}\gamma_i\
\begin{tikzpicture}[scale=0.7,baseline=12,x=4.5pt,y=4.5pt]
\coordinate (t1) at (5,-5);
\coordinate (t2) at (-5,-5);
\coordinate (t3) at (5,6);
\coordinate (t4) at (-4,3.5);
\coordinate (t5) at (-3.5,-2.7);
\coordinate (t6) at (0,2);
\coordinate (t7) at (0,-1);
\coordinate (s1) at (-5,0.1);
\coordinate (s) at (-5,5);
\coordinate (f) at (-5,7);
\coordinate (cp) at (0,12);
\coordinate (b) at (0,15);
\path [draw,thick,postaction={on each segment={mid  arrow}}]
(t6) to  [in=-90,out=150] (s) (s1) to [out=90,in=-135] (t4)
(cp) --(b) (t3) to  [in=-30,out=90]  (cp) (f) to[in=-150,out=90] (cp)
(t1) to [out=90,in=-20] (t7) (t5) to [out=45,in=-160] (t7)
(t7)--(t6) node [midway,right] {
};
\draw[thick] (s1) to [out=-90,in=135] (t5) (t2) to [out=90, in=-135] (t5);
\node at (-6,0) {\tiny $i$};
\draw[thick] (t3) to[out=-90,in=30] (t6) (t1) to [out=90,in=-20] (t7)
(f)--(s) node [midway,left] {\tiny $k$};
\node at (-6.5,-4)  {\tiny $m$};
\node at (6.5,-4)  {\tiny $n$};
\end{tikzpicture}.
\ee

\vspace{0.3cm}
Using associativity and \eqref{eq:bubleMOY}, we can transform the last diagram as follows:
\begin{equation}
\begin{tikzpicture}[scale=0.7,baseline=12,x=4.5pt,y=4.5pt]
\coordinate (t1) at (5,-5);
\coordinate (t2) at (-5,-5);
\coordinate (t3) at (5,6);
\coordinate (t4) at (-4,3.5);
\coordinate (t5) at (-3.5,-2.7);
\coordinate (t6) at (0,2);
\coordinate (t7) at (0,-1);
\coordinate (s1) at (-5,0.1);
\coordinate (s) at (-5,5);
\coordinate (f) at (-5,7);
\coordinate (cp) at (0,12);
\coordinate (b) at (0,15);
\path [draw,thick,postaction={on each segment={mid  arrow}}]
(t6) to  [in=-90,out=150] (s) (s1) to [out=90,in=-135] (t4) ;
\path [draw,thick,postaction={on each segment={mid arrow}}]
(cp) --(b) (t3) to  [in=-30,out=90]  (cp) (f) to[in=-150,out=90] (cp)
(t1) to [out=90,in=-20] (t7) (t5) to [out=45,in=-160] (t7)
(t7)--(t6) node [midway,right] {
};
\draw[thick] (s1) to [out=-90,in=135] (t5) ;
\node at (-6,0) {\tiny $i$};
\draw[thick] (t3) to[out=-90,in=30] (t6);
\path [draw,thick,postaction={on each segment={mid  arrow}}]
 (t2) to [out=90, in=-135] (t5) ;
\draw[thick] (t1) to [out=90,in=-20] (t7);
\draw[thick] (f)--(s) node [midway,left] {\tiny $k$};
\node at (-6.5,-4)  {\tiny $m$};
\node at (6.5,-4)  {\tiny $n$};
\end{tikzpicture}
=
\begin{tikzpicture}[scale=0.7,baseline=12,x=4.5pt,y=4.5pt]
\coordinate (t1) at (5,-5);
\coordinate (t2) at (-5,-5);
\coordinate (t4) at (0,7);
   \coordinate (t5) at (-3.5,-2.7);
\coordinate (t6) at (0,2);
\coordinate (t7) at (0,-1);
\coordinate (cp) at (0,12);
\coordinate (b) at (0,15);
\path [draw,thick,postaction={on each segment={mid  arrow}}]
 (t5) to [out=110,in=-135] node[midway,left] {\tiny $i$} (cp)
 (t4)-- (cp) (-1.32,5) -- (-1.32,5.2) (1.32,5) -- (1.32,5.2)
 (t1) to [out=90,in=-20] (t7);
\path [draw,thick,postaction={on each segment={mid  arrow}}]
(t7) --  node [midway,right] {\tiny $m+n-i$}(t6) (cp) --(b)
 (t5) to [out=45,in=-160] (t7);
\path [draw,thick,postaction={on each segment={mid arrow}}]
 (t2) to [out=90, in=-135] (t5) ;
 \draw[thick] (t6) to[out=160,in=-160] (t4)
  (t6) to [out=20,in=-20] node [midway,right] {\tiny $m+n-k$
  } (t4);
\node at (-6.5,-4)  {\tiny $m$};
\node at (6.5,-4)  {\tiny $n$};
\end{tikzpicture}
=
\qbinom{n+m-i}{k-i}{p}
\begin{tikzpicture}[scale=0.7,baseline=12,x=4.5pt,y=4.5pt]
\coordinate (t1) at (5,-5);
\coordinate (t2) at (-5,-5);
\coordinate (t5) at (-3.5,-2.7);
\coordinate (t7) at (0,-1);
\coordinate (cp) at (0,12);
\coordinate (b) at (0,15);
\path [draw,thick,postaction={on each segment={mid  arrow}}]
(t5) to [out=110,in=-135] node[midway,left] {\tiny $i$} (cp)
(t7) --  node [midway,right] {
}(cp) ;
\path [draw,thick,postaction={on each segment={mid  arrow}}]
(cp) --(b) (t5)  to [out=45,in=-160] (t7) (t1) to [out=90,in=-20] (t7);
\path [draw,thick,postaction={on each segment={mid arrow}}]
 (t2) to [out=90, in=-135] (t5);
\node at (-6.5,-4)  {\tiny $m$};
\node at (6.5,-4)  {\tiny $n$};
\end{tikzpicture}
\end{equation}
and
\begin{equation}
\begin{tikzpicture}[scale=0.7,baseline=12,x=4.5pt,y=4.5pt]
\coordinate (t1) at (5,-5);
\coordinate (t2) at (-5,-5);
\coordinate (t5) at (-3.5,-2.7);
\coordinate (t7) at (0,-1);
\coordinate (cp) at (0,12);
\coordinate (b) at (0,15);
\path [draw,thick,postaction={on each segment={mid  arrow}}]
(t5) to [out=110,in=-135] node[midway,left] {\tiny $i$} (cp)
(t7) --  node [midway,right] {
}(cp) ;
\path [draw,thick,postaction={on each segment={mid  arrow}}]
(cp) --(b) (t5)  to [out=45,in=-160] (t7) (t1) to [out=90,in=-20] (t7);
\path [draw,thick,postaction={on each segment={mid arrow}}]
 (t2) to [out=90, in=-135] (t5);
\node at (-6.5,-4)  {\tiny $m$};
\node at (6.5,-4)  {\tiny $n$};
\end{tikzpicture}
=
\begin{tikzpicture}[scale=0.7,baseline=12,x=4.5pt,y=4.5pt]
\coordinate (t1) at (5,-5);
\coordinate (t2) at (-5,-5);
\coordinate (t3) at (0,6);
\coordinate (t4) at (-4.5,-1);
\coordinate (t5) at (-2.2,3.5);
\coordinate (b) at (0,15);
\path [draw,thick,postaction={on each segment={mid  arrow}}]
(t4) to[out=130,in=170] node[midway,left] {\tiny $i$} (t5)
(t4) to [out=10, in=-60] (t5)
(t2) to [out=90, in=-100] (t4) (t5) to [out=60, in=-135] (t3)
(t3) -- (b) (t1) to [out=90,in=-45] (t3);
\node at (-6.5,-4)  {\tiny $m$};
\node at (6.5,-4)  {\tiny $n$};
\end{tikzpicture}
=
\qbinom{m}{i}{p}
\begin{tikzpicture}[scale=0.7,baseline=12,x=4.5pt,y=4.5pt]
\coordinate (t1) at (5,-5);
\coordinate (t2) at (-5,-5);
\coordinate (t3) at (0,6);
\coordinate (b) at (0,15);
\path [draw,thick,postaction={on each segment={mid arrow}}]
 (t2) to [out=90, in=-135] (t3) (t3) -- (b)
 (t1) to [out=90,in=-45]  (t3);
\node at (-6.5,-4)  {\tiny $m$};
\node at (6.5,-4)  {\tiny $n$};
\end{tikzpicture}.
\end{equation}
Putting everything together, we come to the expression
 \be
  f_E\triangleleft f_F=f_Ef_F u_{|E|,|F|},
  \ee
  where
 \be
  u_{m,n}:=\sum_{k=0}^{n}t^k\gauss{k}v_{k,m,n},\quad v_{k,m,n}:=\sum_{i=0}^{\min(k,m)}\gauss{i}\qbinom{m}{i}{p}\qbinom{m+n-i}{k-i}{p}.
  \ee
After simplification we can calculate $v_{k,m,n}$ using a $q$-Vandermonde sum (\cite{MR2128719}, Appendix (II.7))
\be
v_{k,m,n}=\qbinom{m+n}{k}{p}\sum_{i=0}^{\min(k,m)}\frac{(p^{-m};p)_i(p^{-k};p)_i}{(p)_i(p^{-m-n};p)_i}
p^{(k-n)i}=\qbinom{m+n}{k}{p}\frac{(p^{-n};p)_k}{(p^{-m-n};p)_k},
\ee
so that
\be
u_{m,n}=\sum_{k=0}^{n}t^k\gauss{k}\qbinom{m+n}{k}{p}\frac{(p^{-n};p)_k}{(p^{-m-n};p)_k}=
\sum_{k=0}^n t^k \gauss{k}\qbinom{n}{k}{p} p^{mk}=(t p^m;p)_n,
\ee
where the last equality is by the $q$-binomial formula~\eqref{eq:qbinom}.
\end{proof}
Let us introduce the following diagrammatic notation
\be\label{rightF}
\begin{tikzpicture}[baseline={(0,-0.1)}
,x=1pt,y=1pt]
 \coordinate (n) at (0,20);
    \coordinate (s) at (0,-20);
\draw[thick] \bell{0,0}{6}{3};
         \draw [thick] (s) --(0,-3);
         \draw[thick,mid arrow] (0,3)--(n) node[midway,right] {
         };
\end{tikzpicture}
\ =
\sum_{n\ge0 }(t;p)_n\quad
\begin{tikzpicture}[baseline={(0,-0.1)}
,x=1pt,y=1pt]
 \coordinate (n) at (0,20);
    \coordinate (s) at (0,-20);
         \draw[thick,mid arrow] (s)--(n) node[midway,right] {\tiny  $n$};
\end{tikzpicture},
\qquad
\begin{tikzpicture}[baseline={(0,-0.1)}
,x=1pt,y=1pt]
 \coordinate (n) at (0,20);
    \coordinate (s) at (0,-20);
\draw[fill=gray,thick] \bell{0,0}{6}{3};
         \draw [thick] (s) --(0,-3);
         \draw[thick,mid arrow] (0,3)--(n) node[midway,right] {
         };
\end{tikzpicture}
\ = \sum_{n\ge0}
\frac{1}{(t;p)_n}\quad
\begin{tikzpicture}[baseline={(0,-0.1)}
,x=1pt,y=1pt]
 \coordinate (n) at (0,20);
    \coordinate (s) at (0,-20);
         \draw[thick,mid arrow] (s)--(n) node[midway,right] {\tiny  $n$};
\end{tikzpicture}.
\ee
We have the obvious diagrammatic equalities
\be
\begin{tikzpicture}[baseline={(0,-0.1)}
,x=1pt,y=1pt]
 \coordinate (n) at (0,20);
    \coordinate (s) at (0,-20);
\draw[thick] \bell{0,5}{6}{3};
\draw[fill=gray,thick] \bell{0,-5}{6}{3};
         \draw [thick] (s) --(0,-8) (0,2)--(0,-2);
         \draw[thick,mid arrow] (0,8)--(n) node[midway,right] {
         };
\end{tikzpicture}
\ = \  \
\begin{tikzpicture}[baseline={(0,-0.1)}
,x=1pt,y=1pt]
 \coordinate (n) at (0,20);
    \coordinate (s) at (0,-20);
         \draw[thick,mid arrow] (s)--(n) node[midway,right] {
         };
\end{tikzpicture}
=\ \begin{tikzpicture}[baseline={(0,-0.1)}
,x=1pt,y=1pt]
 \coordinate (n) at (0,20);
    \coordinate (s) at (0,-20);
\draw[fill=gray,thick] \bell{0,5}{6}{3};
\draw[thick] \bell{0,-5}{6}{3};
         \draw [thick] (s) --(0,-8) (0,2)--(0,-2);
         \draw[thick,mid arrow] (0,8)--(n) node[midway,right] {
         };
\end{tikzpicture}\ .
\ee
\begin{lemma}\label{rightMOY}
The right action $x\triangleleft y$ given by \eqref{eq:right-action-general}
can be written in the following diagrammatic form
\be
\begin{tikzpicture}[scale=2,baseline=20,x=15pt,y=15pt]
\node (d) at (0,-1+1.6)[coact]{};
\coordinate (t1) at (-.5,-2+1.7);
\coordinate (t2) at (.5,-2+1.7);
\coordinate (b) at (0,0+1.7);
\path [draw,thick,postaction={on each segment={mid end arrow}}]
(d) --  (b) ;
\path [draw,thick,postaction={on each segment={mid  arrow}}]
(t1)  to  [in=-135,out=90] node [near start,left] {
} (d)
(t2) to [in=-45,out=90] node [near start,right] {
} (d);
\end{tikzpicture}
=
\begin{tikzpicture}[scale=2,baseline=20,x=15pt,y=15pt]
\coordinate (d) at (0,0.6);
\coordinate (t1) at (-.5,-0.3);
\coordinate (t2) at (.5,-0.3);
\coordinate (b) at (0,0+1.7);
\path [draw,thick,postaction={on each segment={mid arrow}}]
(0,1.1) --  (b) (-0.2,0.44) --(-0.15,0.48);
\path [draw,thick,postaction={on each segment={mid  arrow}}]
(t2) to [in=-45,out=90] node [near start,right] {
} (d);
\draw[thick] (t1)  to  [in=-144,out=90] node [near start,left] {
} (d);

\draw[fill=gray,thick,rotate around={-36:(-0.35,0.2)}] \bell{-0.36,0.2}{0.2}{0.1};
\draw[thick] \bell{0,1}{0.2}{0.1} (d)--(0,0.9);

\end{tikzpicture}.
\ee
\end{lemma}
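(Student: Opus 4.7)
The plan is to evaluate the right-hand diagram on an arbitrary basis tensor $f_E\otimes f_F$ and match the result against the formula~\eqref{eq:right-action-general} for $f_E\triangleleft f_F$. Reading the diagram bottom-to-top, it encodes the composition $W\circ\nabla\circ(G\otimes\operatorname{id}_{\Lambda_p(V)})$, where $G$ is the gray-ellipse operator (which, by the right half of~\eqref{rightF}, multiplies a homogeneous element of degree $n$ by $1/(t;p)_n$), $\nabla$ is the product, and $W$ is the white-ellipse operator (which multiplies a homogeneous element of degree $n$ by $(t;p)_n$ per the left half of~\eqref{rightF}). Both $G$ and $W$ are diagonal on the degree grading of $\Lambda_p(V)$, so their action on any homogeneous input is a scalar.

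Applying this composition to $f_E\otimes f_F$ proceeds in three elementary steps. The gray box first produces $\frac{1}{(t;p)_{|E|}}f_E\otimes f_F$. Multiplication via~\eqref{eq:prod-comb} either annihilates the result, in the case $E\cap F\ne\emptyset$ (in which case $f_E\triangleleft f_F=0$ as well, by~\eqref{eq:right-action-general} combined with~\eqref{eq:prod-comb}, so both sides trivially agree), or yields a nonzero scalar multiple of $f_{E\cup F}$ lying in degree $|E|+|F|$. In the nontrivial case, applying the white box then introduces the factor $(t;p)_{|E|+|F|}$, so the overall output of the right-hand diagram evaluates to
\begin{equation*}
\frac{(t;p)_{|E|+|F|}}{(t;p)_{|E|}}\,f_Ef_F.
\end{equation*}

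The remaining step is the elementary Pochhammer factorisation $(t;p)_{m+n}=(t;p)_m\,(tp^m;p)_n$, which reduces the above scalar to $(tp^{|E|};p)_{|F|}$ and recovers precisely the right-hand side of~\eqref{eq:right-action-general}. The only potential obstacle I foresee is a careful reading of the diagrammatic conventions---in particular, confirming that the gray and white boxes act on strands whose state is homogeneous so that the scalar prescription of~\eqref{rightF} applies unambiguously. This is automatic, since both $f_E$ and $f_Ef_F$ are homogeneous whenever nonzero, and the resulting identity extends by bilinearity to all of $\Lambda_p(V)^{\otimes 2}$, yielding the claimed equality of morphisms.
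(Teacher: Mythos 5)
Your proof is correct and follows essentially the same route as the paper: the paper's own argument is exactly the observation that the gray and white ellipses act diagonally by $1/(t;p)_{|E|}$ and $(t;p)_{|E|+|F|}$ respectively, so the diagram yields $f_Ef_F\,(t;p)_{|E|+|F|}/(t;p)_{|E|}$, which reduces to $(tp^{|E|};p)_{|F|}f_Ef_F$ by the Pochhammer identity~\eqref{eq:poch-add}. You simply spell out the evaluation on basis vectors (including the trivial case $E\cap F\neq\emptyset$) in more detail than the paper's one-line proof.
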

\begin{proof}
This immediately follows from definition \eqref{rightF}
and the identity
\be\label{eq:poch-add}
(t;p)_{m+n}=(t;p)_n(tp^n;p)_m.
\ee
\end{proof}

\begin{theorem}\label{thm:big-R-matrix}
 The $R$-matrix~\eqref{eq:r-mat}, \eqref{eq:r-mat-graph} acts as follows
\begin{equation}\label{eq:rho-act}
\rho f_{E,F}=\sum_{k=0}^{\min(|E|,|F|)}t^k\gauss{k}W_{|F|-k}\big(tp^{|E|},tL;p\big)R^{\langle |E|-k\rangle _p}f_{E,F},
\end{equation}
where
\be
W_n(x,y;p):=\sum_{i=0}^n(x;p)_{n-i}y^{\langle i\rangle_p}.
\ee
This corresponds to the MOY diagrammatic equation
\begin{equation}
\begin{tikzpicture}[yscale=2,baseline=-3]
\node (x)[hvector] {  $\rho $};
\path [draw,thick,postaction={on each segment={mid arrow}}]
 (x.north west) to [out=135,in=-90]node[midway,left] {\tiny $i$}+(-10pt,10pt)
(x.north east) to [out=45,in=-90]node[midway,right] {\tiny $m+n-i$}+(10pt,10pt)
(x.south west)+(-10pt,-10pt) to [out=90,in=-135]  node[midway,left] {\tiny $m$} (x.south west)
(x.south east)+(10pt,-10pt)  to [out=90,in=-45]  node[midway,right] {\tiny $n$} (x.south east);
\end{tikzpicture}
=
t^i(tp^m;p)_{n-i}\sum_{k=0}^{\min(i,m)}\gauss{k}
\begin{tikzpicture}[yscale=.37,xscale=1,baseline=24]
\coordinate (wn) at (0,5);
\coordinate (en) at (1,5);
\coordinate (ws) at (0,0);
\coordinate (es) at (1,0);
\coordinate (w1) at (0,1);
\coordinate (w2) at (0,2);
\coordinate (w3) at (0,3);
\coordinate (w4) at (0,4);
\coordinate (e1) at (1,1);
\coordinate (e2) at (1,2);
\coordinate (e3) at (1,3);
\coordinate (e4) at (1,4);
\path [draw,thick,postaction={on each segment={mid arrow}}]
 (w1)--(e2) (e3)--(w4) (e2)--(e3)
 (ws)--(w1) node[midway,left] {\tiny $m$}
 (w1)--(w4)  node[midway,left] {\tiny $k$}
(w4)--(wn)node[midway,left] {\tiny $i$}
 (es)--(e2)  node[midway,right] {\tiny $n$}
 (e3)--(en)  node[midway,right] {\tiny $m+n-i$} ;
\end{tikzpicture}.
\end{equation}

\end{theorem}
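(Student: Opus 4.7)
The plan is to compute $\rho$ diagrammatically from \eqref{eq:r-mat-graph}, extract the $(i,m+n-i)$-component of $\rho(\pi_m\otimes\pi_n)$, and identify it with the two forms appearing in the theorem. The key inputs will be Lemma \ref{rightMOY} (simplifying $\lambda_R$) and Proposition \ref{prop:braiding-MOY} (expanding $\hat\tau$). Concretely, I will first use Lemma \ref{rightMOY} to replace $\lambda_R$ in the definition of $\rho$ by the product $\nabla$ decorated with the scalar $(tp^a;p)_b$ on inputs of degrees $(a,b)$, using identity \eqref{eq:poch-add}. This turns the diagram of $\rho$ into the composition of a coproduct $\Delta$ on the right input, a braiding $\hat\tau$ on the left input and the left branch of that coproduct, $\phi_t$ on the top-left output, and a product $\nabla$ recombining the right branch of the braiding with the right branch of the coproduct, all multiplied by the Pochhammer decoration.

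Next, I will decompose the coproduct on $\pi_n$ by degree via $\Delta\pi_n=\sum_{j=0}^n\Delta_{j),(n-j}\pi_n$ and expand $\hat\tau(\pi_m\otimes\pi_j)$ using Proposition \ref{prop:braiding-MOY} as $\sum_k\gauss{k}L^{\langle j-k\rangle_p}R^{\langle m-k\rangle_p}(\pi_m\otimes\pi_j)$. Isolating the $(i,m+n-i)$-component forces $j=i$, so $\phi_t$ contributes $t^i$ and the Pochhammer decoration specialises to $(tp^m;p)_{n-i}$. The remaining fragment ``split $\pi_n$ into $(\pi_i,\pi_{n-i})$, apply $L^{\langle i-k\rangle_p}R^{\langle m-k\rangle_p}$ to $(\pi_m,\pi_i)$, then multiply the outgoing $m$-strand with $\pi_{n-i}$'' can be collapsed, by coassociativity along the right strand (verifiable directly from the explicit formulas \eqref{eq:comult-comb} and \eqref{eq:prod-comb}), into a single MOY configuration representing $L^{\langle i-k\rangle_p}R^{\langle m-k\rangle_p}(\pi_m\otimes\pi_n)$ projected to $(i,m+n-i)$. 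This gives the MOY identity of the theorem.

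Finally, the operator form follows by expanding $W_{n-k}(tp^m,tL;p)=\sum_{\ell\ge 0}(tp^m;p)_{n-k-\ell}(tL)^{\langle\ell\rangle_p}$ and noting that $L^{\langle\ell\rangle_p}R^{\langle m-k\rangle_p}(\pi_m\otimes\pi_n)$ lives in degrees $(k+\ell,m+n-k-\ell)$; the $(i,m+n-i)$-component selects $\ell=i-k$, contributing $t^{i-k}(tp^m;p)_{n-i}$, which together with $t^k\gauss{k}$ reproduces the MOY prefactor $t^i(tp^m;p)_{n-i}\gauss{k}$. The hardest step will be the collapse in the middle paragraph: checking that the scalar normalisations coming from the divided powers hidden inside $L^{\langle\cdot\rangle_p}$ and $R^{\langle\cdot\rangle_p}$ line up correctly with the coefficients from the separate $\Delta_{i),(n-i}$ splitting and the final $\nabla$, so that no spurious rescaling appears in the identification with $L^{\langle i-k\rangle_p}R^{\langle m-k\rangle_p}(\pi_m\otimes\pi_n)$.
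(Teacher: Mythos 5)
Your opening reduction is correct and coincides with the first steps of the paper's own argument: Lemma~\ref{rightMOY} converts $\lambda_R$ into $\nabla$ weighted by a Pochhammer factor, degree preservation along the strands forces the coproduct leg to have degree $j=i$, and $\phi_t$ contributes $t^i$ while the weight specialises to $(tp^m;p)_{n-i}$; the final paragraph (exchanging summations to pass between the MOY form and~\eqref{eq:rho-act}) is also fine. The genuine gap is exactly the step you flag as hardest. After expanding $\hat\tau(\pi_m\otimes\pi_i)$ by Proposition~\ref{prop:braiding-MOY}, you assert that, for each fixed $k$, the fragment ``split $\pi_n$ into $(\pi_i,\pi_{n-i})$, apply $L^{\langle i-k\rangle_p}R^{\langle m-k\rangle_p}$ to $(\pi_m,\pi_i)$, then merge the outgoing strand with $\pi_{n-i}$'' collapses by coassociativity to $L^{\langle i-k\rangle_p}R^{\langle m-k\rangle_p}(\pi_m\otimes\pi_n)$. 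This is false term by term: sliding the split/merge of the right strand past the two rungs is not a coassociativity move, and spurious factors do appear. For $m=1$, $n=2$, $i=1$, $k=0$ and input $f_{\{1\},\{2,3\}}$ your composite gives
\begin{equation*}
(1+p)\,f_{\{1\},\{2,3\}}-p\,f_{\{2\},\{1,3\}}+p^{2}f_{\{3\},\{1,2\}},
\qquad\text{whereas}\qquad
L^{\langle 1\rangle_p}R^{\langle 1\rangle_p}f_{\{1\},\{2,3\}}=f_{\{1\},\{2,3\}}-p\,f_{\{2\},\{1,3\}}+p^{2}f_{\{3\},\{1,2\}},
\end{equation*}
and similarly the $k=1$ composite equals $\qbinom{2}{1}{p}f_{\{1\},\{2,3\}}$ rather than $f_{\{1\},\{2,3\}}$ (a bubble in the sense of Lemma~\ref{lem:buble}).

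The required identity is only true after performing the $\gauss{k}$-weighted sum over $k$, where these discrepancies cancel (as they do in the example above), and establishing that cancellation is not a normalisation check: it needs the square-switch relation~\eqref{eq:RL-to-LR-MOY}, bubble removal~\eqref{eq:bubleMOY}, and a resummation of the type in Lemma~\ref{Lemma5} --- in effect repeating the proof of Proposition~\ref{prop:braiding-MOY} in the presence of the extra split and merge. The paper avoids this by never expanding $\hat\tau$ at this stage: it applies the graphical identity~\eqref{braiding-tr} to the composite, which rewrites it in the form of the operator $B$ of~\eqref{eq:B-operator} (coproducts, one antipode, products) sandwiched between the Pochhammer insertions; the antipode then supplies the factors $\gauss{k}$ via~\eqref{eq:antipode-comp}, and Proposition~\ref{prop:LkRk} identifies the remaining pieces with the divided powers $L^{\langle i-k\rangle_p}R^{\langle m-k\rangle_p}$ acting on $\pi_m\otimes\pi_n$ directly, with no extra rescaling. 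To complete your argument you must either import that trick or carry out the square-switch/bubble/resummation computation explicitly; as written, the middle step does not hold.
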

\begin{proof}
 By using~\eqref{eq:r-mat}, \eqref{eq:right-action-general},
 the $R$-matrix $\rho$ can be represented by the following diagrams:
\be\label{eq:rho-grafical-eqs}
\rho =
 \begin{tikzpicture}[xscale=.6, baseline=-1,x=1pt,y=1pt]
\coordinate(ze) at (0,0);
\coordinate(sw) at (-30,-30);
\coordinate(nw) at (-30,30);
\coordinate(se) at (30,-30);
\coordinate(ne) at (30,30);
\coordinate (bot) at (26,-19);
\node (d) at (26,19)[coact]{};
\node (f) at (-26,19)[autom]{};
\path [draw,thick,postaction={on each segment={mid arrow}}]
(bot) to[out=120,in=-45] (ze)
(se) to[out=100,in=120] node[near start,right] {
} (bot)
(ze) to[out=135,in=-46] (f) to[out=122,in=-90] (nw);
\draw[line width=3pt, white]
(-5,-5) to[out=90,in=-135]   (ze)  (ze) to[out=45,in=-90] (5,5) ;
\path [draw,thick,postaction={on each segment={mid arrow}}]
(ze) to[out=45,in=-134] (d) (d) to[out=58,in=-90] (ne) (bot) -- (26,18);
\path [draw,thick,postaction={on each segment={mid end arrow}}]
(sw) to[out=90,in=-135]  (ze);
 \end{tikzpicture}
\quad =\quad
 \begin{tikzpicture}[baseline=-1,xscale=.83,x=1pt,y=1pt]
\coordinate(ze) at (0,0);
\coordinate(sw) at (-30,-30);
\coordinate(nw) at (-30,30);
\coordinate(se) at (30,-30);
\coordinate(ne) at (30,30);
\coordinate (d) at (24,17);
\coordinate (d1) at (28,21);
\coordinate (d2) at (18,-12);
\coordinate (d3) at (18,12);
\node (f) at (-26,19)[autom]{};
\path [draw,thick,postaction={on each segment={mid arrow}}]
(se)to[out=90,in=138](d2) (ze)to[out=135,in=-46](f) (f)to[out=122,in=-90](nw);
\draw[thick]  (d2) to[out=134,in=-45] (ze);
\draw[line width=3pt, white]
(-5,-5) to[out=90,in=-135] (ze) (ze) to[out=45,in=-90] (5,5) ;
\path [draw,thick,postaction={on each segment={mid arrow}}]
(d3) to[out=45,in=-134] (d) (d1) to[out=58,in=-90] (ne) (d2) --(d3);
\draw[thick]  (ze) to[out=45,in=-134] (d3);
\path [draw,thick,postaction={on each segment={mid end arrow}}]
(sw) to[out=90,in=-135] (ze);
\draw[thick,rotate around={-44:(26,19)}] \bell{26,19}{6}{3};
\draw[fill=gray,thick,rotate around={-44:(-26,-19)}] \bell{-26,-19}{6}{3};
 \end{tikzpicture}
\quad =\quad
 \begin{tikzpicture}[baseline=-1,x=1pt,y=1pt]
\coordinate(ze1) at (0,-5);
\coordinate(ze2) at (0,5);
\coordinate(sw) at (-20,-30);
\coordinate(nw) at (-20,30);
\coordinate(se) at (20,-30);
\coordinate(ne) at (20,30);
\coordinate (d) at (15,15);
\coordinate (d1) at (17.5,20);
\coordinate (d2) at (-12.5,-14);
\coordinate (d3) at (-12.5,14);
\node (s) at (-12.5,0)[antipode]{} ;

\node (f) at (-16,18)[autom]{};
\path [draw,thick,postaction={on each segment={mid arrow}}]
 (ze2)to[out=160,in=-46](f) (ze1)--(ze2) (s)--(d3)
(f)to[out=122,in=-90] node[near end,left] {
} (nw)
(ze2) to[out=20,in=-120] (d);
\path [draw,thick,postaction={on each segment={mid arrow}}]
 (d1) to[out=58,in=-90] node[near end,right] {
 } (ne) ;
\path [draw,thick,postaction={on each segment={mid end arrow}}]
(se) to[out=110,in=-20] (ze1)
(sw) to[out=70,in=-120] (d2) to[out=60,in=-160] (ze1);
\draw[thick] (d2)--(s);
\draw[thick,rotate around={-26:(16.5,17.5)}] \bell{16.5,17.5}{6}{3};
\draw[fill=gray,thick,rotate around={-26:(-17.5,-24)}] \bell{-17.5,-24}{6}{3};
 \end{tikzpicture},
\ee
where, in the second equality,  we use Lemma~\ref{rightMOY} and the fact that the braiding $\hat\tau$ preserves the degree along the strands, and, in the third equality, we use identity~\eqref{braiding-tr}.
Restricting $\rho$ to $\Lambda_p^m(V)\otimes \Lambda_p^n(V)$, using Proposition~\ref{prop:LkRk} and \eqref{eq:poch-add}, we obtain
\be\label{rhoproof}
\rho\vert_{\Lambda_p^m(V)\otimes \Lambda_p^n(V)}= \sum_{i=0}^n t^i(tp^m;p)_{n-i}\sum_{k=0}^{\min(i,m)}\gauss{k}L^{\langle i-k\rangle _p}R^{\langle m-k\rangle _p}\vert_{\Lambda_p^m(V)\otimes \Lambda_p^n(V)},
\ee
and~\eqref{eq:rho-act} follows
after exchanging the order of summation.
\end{proof}
We define the matrix coefficients of $\rho$ as follows:
\begin{equation}\label{eq:BigRmatrix}
\rho f_{E,F}=(-1)^{|E||F|}\sum r_{E,G;F,H} f_{F',E'}
\end{equation}
where the summation runs over $G\subseteq E\setminus F$ and $H\subseteq F\setminus E$ such that $|H|\ge|G|$, and the map
 \be
 \theta_{\{\cdot\},H}\colon G\to \Z_{\ge 0}
 \ee
 is injective with the image contained in $\Z_{> 0}$, and
\be
E':=(E\setminus G)\sqcup H,\quad F':=(F\setminus H)\sqcup G.
\ee
\begin{proposition}
 We have the following formula for the coefficients of $\rho$ defined in~\eqref{eq:BigRmatrix}:
 \begin{equation}\label{eq:bigR-coefficients}
 r_{E,G;F,H} =t^{|F'|}(tp^{|E|};p)_{|H|-|G|}\beta_{E,G;F,H},
\end{equation}
 where $\beta_{E,G;F,H}$ is given by~\eqref{eq:betaEGFH}.
\end{proposition}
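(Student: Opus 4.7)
The plan is to reduce the computation of $r_{E,G;F,H}$ to Lemma~\ref{lem:betaEGFH}, which gives the matrix coefficients of the operator $B$ defined in~\eqref{eq:B-operator}. The key observation is that the expression~\eqref{rhoproof} obtained in the proof of Theorem~\ref{thm:big-R-matrix},
$$\rho\vert_{\Lambda_p^m(V)\otimes \Lambda_p^n(V)}=\sum_{i=0}^n t^i(tp^m;p)_{n-i}\sum_{k=0}^{\min(i,m)}\gauss{k}L^{\langle i-k\rangle_p}R^{\langle m-k\rangle_p},$$
becomes, after the change of variables $i'=m-k$ and $j=i-k$, a sum of terms $\gauss{m-i'}L^{\langle j\rangle_p}R^{\langle i'\rangle_p}$ subject to the constraint $j-i'=i-m$. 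Since $B=\operatorname{exp}_p(L)(S\otimes\operatorname{id})\operatorname{exp}_p(R)$ and, by Theorem~\ref{thm:1}, $S$ acts on $\Lambda_p^k(V)$ as multiplication by $\gauss{k}$, the inner sum at fixed $i$ equals the $\Lambda_p^i(V)\otimes\Lambda_p^{m+n-i}(V)$-component of $Bf_{E,F}$.

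First I would establish the identity
$$\rho f_{E,F}=\sum_i t^i(tp^{|E|};p)_{|F|-i}(\pi_i\otimes\operatorname{id}_{\Lambda_p(V)})Bf_{E,F}$$
via the substitution described above. Then I would plug in the expansion of $Bf_{E,F}$ from Lemma~\ref{lem:betaEGFH}. Since the output left-degree of $f_{F',E'}$ equals $|F'|=|F|-|H|+|G|$, the projector $\pi_i\otimes\operatorname{id}$ selects the pairs $(G,H)$ for which $|F'|=i$, equivalently $|H|-|G|=|F|-i$. Replacing $i$ by $|F'|$ in the surviving factor turns it into $t^{|F'|}(tp^{|E|};p)_{|H|-|G|}$, and comparison with definition~\eqref{eq:BigRmatrix} yields the desired formula~\eqref{eq:bigR-coefficients}. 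The constraints on $(G,H)$ in~\eqref{eq:BigRmatrix} (namely $|H|\ge|G|$ together with the injectivity of $\theta_{\{\cdot\},H}$ with image in $\Z_{>0}$) coincide with those in Lemma~\ref{lem:alpha0}, which govern when $\alpha_{G,H}$ — and hence $\beta_{E,G;F,H}$ — fails to vanish for generic $p$; contributions outside this locus may be harmlessly included or excluded from the sum.

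The hard part will be the index-matching in the first step: one must carefully track which factor of the decomposition of $\rho$ corresponds to the antipode inside $B$, and verify that the output-degree constraint $|F'|=i$ is consistent with the Pochhammer length $|H|-|G|$ appearing in~\eqref{eq:bigR-coefficients}. Once this identification is in place, the remainder of the argument is essentially bookkeeping. A diagrammatic alternative would start from the third expression for $\rho$ in~\eqref{eq:rho-grafical-eqs}, which already exhibits $\rho$ as the automorphism $\phi_t$ and a Pochhammer insertion applied to a $B$-like skeleton; but the algebraic route via~\eqref{rhoproof} and Lemma~\ref{lem:betaEGFH} is the most direct.
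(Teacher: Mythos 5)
Your proposal is correct and in substance matches the paper's own proof: both arguments recognise $\rho$ as the operator $B$ of~\eqref{eq:B-operator} dressed by the degree-dependent scalar $t^{|F'|}(tp^{|E|};p)_{|H|-|G|}$, and then read off the coefficients from Lemma~\ref{lem:betaEGFH}, the admissibility conditions on $(G,H)$ being exactly the nonvanishing locus of $\alpha_{G,H}$ from Lemma~\ref{lem:alpha0}. The only difference is cosmetic: you reconstitute $B$ from the algebraic expansion~\eqref{rhoproof} via the degree projections $\pi_i\otimes\operatorname{id}$, whereas the paper reads the same decomposition directly off the last diagram in~\eqref{eq:rho-grafical-eqs}.
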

\begin{proof}
Equality~\eqref{eq:bigR-coefficients} is an immediate consequence of the last equality in~\eqref{eq:rho-grafical-eqs} and the definition of the operator $B$ in~\eqref{eq:B-operator} and Lemma~\ref{lem:betaEGFH}.
\end{proof}
\begin{conjecture}\label{conj:1}
 Let $\dim(V)=N$. Then the invariant of long knots $J_{\rho}$ associated with the $R$-matrix~\eqref{eq:r-mat} is of the form
 \be
 J_{\rho}=\operatorname{LG}^{(N)}\!(p,t)\operatorname{id}_{\Lambda_p(V)}
 \ee
 where $\operatorname{LG}^{(N)}\!(p,t)$ is the Links--Gould invariant~\cite{MR1199742,MR1223526} associated with a $2^N$-dimensional representation of the super quantum group
 \( U_q(\mathfrak{gl}(N|1)) \).
\end{conjecture}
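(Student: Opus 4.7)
The plan is to match the $R$-matrix $\rho$ of Theorem~\ref{thm:big-R-matrix} with the $R$-matrix of the $2^N$-dimensional typical representation of $U_q(\mathfrak{gl}(N|1))$ and then invoke the fact that the Reshetikhin--Turaev invariant of long knots depends only on the enhanced $R$-matrix (up to framing anomalies, which cancel because $J_\rho$ is a scalar on $\Lambda_p(V)$ by the definition of a long-knot invariant). The first step is to equip $\Lambda_p(V)$ with a $U_q(\mathfrak{gl}(N|1))$-module structure in which $t$ plays the role of the deformation parameter of the typical module: the even Cartan and the even raising/lowering operators should act via the $U_q(\mathfrak{gl}_N)$-generators built from the braided Hopf structure (extending the $U_q(\mathfrak{gl}_2)$-subalgebra identified in Section~\ref{sec:5} via~\eqref{eq:uqsl2-in-uqdl2}), while the odd generators $E_N, F_N$ should act as the exterior multiplication by basis vectors $f_{\{a\}}$ and their braided ``duals'', with the odd Cartan twisted by $\phi_t$.

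Next, I would verify the defining relations of $U_q(\mathfrak{gl}(N|1))$ (including the quartic Serre relations involving the odd simple root) for these operators by a diagrammatic computation. The key intermediate step is Proposition~\ref{prop:braiding-MOY}, which shows that $\hat\tau$ is built from the $U_q(\mathfrak{gl}_2)$-like commutations~\eqref{eq:rllr} and \eqref{eq:RL-to-LR}; its multi-strand extension produces the $U_q(\mathfrak{gl}_N)$ Hecke $R$-matrix on $V^{\otimes n}$, and the identification $\Lambda_p(V) \simeq \mathfrak{B}(V)$ of Theorem~\ref{thm:Lambda-p-Nichols} exhibits the Nichols algebra inside the typical module in its PBW-like basis $\{f_E\}_{E\in\mathcal{P}(\B)}$. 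Once this module structure is in place, I would identify the right action $\triangleleft$ of~\eqref{eq:right-action-general} with the action of the positive Borel of $U_q(\mathfrak{gl}(N|1))$ twisted by the typical-module character encoded by $t$; the appearance of the Pochhammer factor $(tp^{|E|};p)_{|F|}$ is exactly the eigenvalue pattern of the odd Cartan on the typical module of highest weight parametrised by $t$.

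With this dictionary, the $R$-matrix $\rho$ of~\eqref{eq:r-mat} should coincide, up to an explicit scalar gauge, with the restriction of the universal $R$-matrix of $U_q(\mathfrak{gl}(N|1))$ to the tensor square of the typical $2^N$-dimensional module; the second form of $\rho$ in~\eqref{eq:rho-grafical-eqs} mirrors the Khoroshkin--Tolstoy factorisation into the diagonal part and the bosonic/fermionic quantum exponentials $\operatorname{exp}_p(L)$ and $\operatorname{exp}_p(R)$. The scalar $J_\rho$ can then be computed as the trace of the enhanced monodromy of the long-knot closure, and the Reshetikhin--Turaev functoriality forces it to equal $\operatorname{LG}^{(N)}(p,t)$ times the identity. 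The main obstacle will be this final identification of $\rho$ with the universal $R$-matrix: one must fix the correct grouplike twist (the diagonal factor $\phi_t\otimes\phi_t^{?}$) and pin down the sign conventions on odd-odd strands so that the Hecke eigenvalues on the even strands and the super-symmetric eigenvalues on the odd strands are simultaneously correct. This amounts to comparing, channel by channel, the size-$k$ exchange formulas of Proposition~\ref{braiding_el} with the Khoroshkin--Tolstoy formula for $U_q(\mathfrak{gl}(N|1))$; verifying the $N$-independence of this comparison (so that the small-$N$ computations cited in the introduction suffice as a base case for an induction on $N$) is the technical heart of the argument.
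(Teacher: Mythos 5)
The statement you are trying to prove is not proved in the paper at all: it is stated as Conjecture~\ref{conj:1}, and the only support the authors offer is agreement of explicit computations for $N=2,3$ with the tables of~\cite{MR1822139}. So your text should be judged as an attempted proof of an open statement, and as such it has genuine gaps rather than being a complete argument. The central step --- identifying $\rho$ of~\eqref{eq:r-mat} with (a gauge of) the $R$-matrix of $U_q(\mathfrak{gl}(N|1))$ on its $2^N$-dimensional typical representation --- is exactly what you yourself label the ``main obstacle'' and the ``technical heart'', and nothing in your sketch actually carries it out: no $U_q(\mathfrak{gl}(N|1))$-action on $\Lambda_p(V)$ is constructed, no Serre relations are checked, and the proposed dictionary (odd generators acting by multiplication by $f_{\{a\}}$ and braided duals, odd Cartan twisted by $\phi_t$) is not verified against the explicit structure constants of Theorem~\ref{thm:1} or Proposition~\ref{braiding_el}. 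Note also that Section~\ref{sec:5} gives a $U_q(\mathfrak{gl}_2)$-action on $\Lambda_p(V)^{\otimes 2}$ through $T_1,T_2,L,R$, which is a tool for computing $\hat\tau$, not a $\mathfrak{gl}(N|1)$-module structure on $\Lambda_p(V)$; extending it as you suggest is a nontrivial construction, not a routine extension.

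Two further points would break the argument even if the dictionary were in place. First, you assume $J_\rho$ is a scalar multiple of $\operatorname{id}_{\Lambda_p(V)}$ ``by the definition of a long-knot invariant''; in the framework of~\cite{GaroufalidisKashaev2023} the long-knot invariant is a priori an endomorphism of $\Lambda_p(V)$, and its being scalar is part of what the conjecture asserts (it would follow, e.g., from an irreducibility or centrality argument that you do not supply). Second, the closing suggestion that the $N=2,3$ computations could serve as ``a base case for an induction on $N$'' is not a valid proof scheme: the invariants $\operatorname{LG}^{(N)}$ for different $N$ are different invariants, and you give no induction step relating the $R$-matrix data for $N$ to that for $N+1$; ``$N$-independence of the comparison'' is precisely the unproved claim, not a mechanism. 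In short, your outline is a reasonable research programme (and broadly the kind of representation-theoretic identification one would expect to settle the conjecture), but it does not constitute a proof, and it does not correspond to any argument in the paper, which deliberately leaves the statement open.
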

The calculations for a few examples of knots for the values $N=2$ and $N=3$ and a comparison with the results of~\cite{MR1822139} are consistent with Conjecture~\ref{conj:1}.

Below, we work out  in detail  the  explicit form of the $R$-matrix defined by~\eqref{eq:r-mat} and determined by~\eqref{eq:BigRmatrix} and~\eqref{eq:bigR-coefficients} for dimensions 1, 2, 3 of the input vector space $V$.

\subsection{The case \texorpdfstring{$\operatorname{dim}(V)=1$.}{Lg}}
For illustrative purposes, we begin by  considering a one-dimensional input vector space $V$, where no nontrivial deformed braided Hopf algebra structures exist.

We have two basis elements $f_0:=f_{\{\}}$ and $f_1:=f_{\{1\}}$ and
the $R$-matrix~\eqref{eq:r-mat} explicitly acts as follows:
\be
(\rho f_{i,j})_{i,j=0}^1=\begin{pmatrix}
 f_{0,0}&(t)_1f_{0,1}+tf_{1,0}\\
f_{0,1}&-tf_{1,1}
\end{pmatrix}.
\ee
The associated (endomorphism-valued) knot invariant, see~\cite{GaroufalidisKashaev2023} and also \cite[Chapter 6.3]{MR4592646}, takes the form
\be
J_\rho=\Delta(t)\operatorname{id}_{\Lambda_p(V)},
\ee where $\Delta(t)$ is the Alexander polynomial with unique normalisation such that
\be
\Delta(t)=\Delta(1/t),\quad \Delta(1)=1.
\ee
\subsection{The case \texorpdfstring{$\operatorname{dim}(V)=2$.}{Lg}}

Denoting the basis elements as
\be
f_0:=f_{\{\}},\quad f_1:=f_{\{1\}},\quad f_2:=f_{\{2\}},\quad f_3:=f_{\{1,2\}},
\ee
we have the degrees
\be
\operatorname{deg}(f_i)=[i],\quad ([i])_{i=0}^3=(0,1,1,2).
\ee

The $R$-matrix $\rho$ splits into three parts: `scattering', `reflection', and `annihilation'
\be
\rho=\rho_s+\rho_{r}+\rho_{a}
\ee
where the scattering part $\rho_s$ acts as
\be
\rho_sf_{i,j}=(-1)^{[i][j]}t^{[j]}p^{m_{j,i}} f_{j,i}
\ee
with the matrix of exponents of $p$
\begin{equation}\label{eq:exponents2}
( m_{i,j})_{i,j=0}^3=
\begin{pmatrix}
0&0&0&0\\
0&0&0&0\\
0&1&0&1\\
0&1&0&1
\end{pmatrix}.
\end{equation}
The reflection part $\rho_r$  acts as
\be
\rho_rf_{i,j}=(-t)^{[i]}r_{i,j} f_{i,j}
\ee
with the matrix of reflection coefficients
\be
(r_{i,j})_{i,j=0}^3=
\begin{pmatrix}
0&(t)_1&(t)_1&(t;p)_2\\
0&0&0&(tp)_1\\
0&(p)_1&0&(tp)_1\\
0&0&0&0
\end{pmatrix},
\ee
and the annihilation part $\rho_a$ is of the form
\be
(\rho_{a} f_{i,j})_{i,j=0}^3\\
=\begin{pmatrix}
0&0&0&t(t)_1(f_{1,2}-pf_{2,1})\\
0&0&(tp)_1f_{0,3}&0\\
0&-(tp)_1f_{0,3}&0&0\\
0&0&0&0
\end{pmatrix}.
\ee
The associated endomorphism-valued knot invariant is expected to take the form
\be
J_\rho=\operatorname{LG}^{(2)}\!(p,t)\operatorname{id}_{\Lambda_p(V)},
\ee
where $\operatorname{LG}^{(2)}\!(p,t)$ is the Links--Gould polynomial~\cite{MR1199742} associated with a 4-dimensional representation of the super quantum group $U_q(\mathfrak{gl}(2|1))$.

\subsection{The case \texorpdfstring{$\operatorname{dim}(V)=3$.}{Lg}}

Denoting the basis elements
\be
 f_0:=f_{\{\}},\quad f_i:=f_{\{i\}},\quad i\in\{1,2,3\},\quad
  f_{1+i+j}:=f_{\{i,j\}},\quad 1\le i<j\le3,\quad f_7:=f_{\{1,2,3\}},
\ee
we have the degrees
\be
\operatorname{deg}(f_i)=[i],\quad ([i])_{i=0}^7=(0,1,1,1,2,2,2,3).
\ee

The $R$-matrix $\rho$  splits into `scattering', `reflection' and `annihilation' parts
\be
\rho=\rho_s+\rho_{r}+\rho_{a}
\ee
where the scattering part $\rho_s$ acts as
\be
\rho_sf_{i,j}=(-1)^{[i][j]}t^{[j]}p^{m_{j,i}} f_{j,i}
\ee
with the matrix of exponents of $p$
\begin{equation}\label{eq:exponents3}
(m_{i,j})_{i,j=0}^7=
\begin{pmatrix}
0&0&0&0&0&0&0&0\\
0&0&0&0&0&0&0&0\\
0&1&0&0&1&1&0&1\\
0&1&1&0&2&1&1&2\\
0&1&0&0&1&1&0&1\\
0&1&1&0&2&1&1&2\\
0&2&1&0&3&2&1&3\\
0&2&1&0&3&2&1&3
\end{pmatrix}.
\end{equation}
The reflection part acts as
\be
\rho_{r} f_{i,j}=(-t)^{[i]}p^{\delta_{[i],2}}r_{i,j}f_{i,j}
\ee
with the matrix of reflection coefficients
\be
(r_{i,j})_{i,j=0}^7
=\begin{pmatrix}
0&(t)_1&(t)_1&(t)_1&(t;p)_2&(t;p)_2&(t;p)_2&(t;p)_3\\
0&0&0&0&(tp)_1&(tp)_1&0&(tp;p)_2\\
0&(p)_1&0&0&(tp)_1&(p;t)_2&(tp)_1&(tp;p)_2\\
0&(p)_1&(p)_1&0&(p^2)_1(tp)_1&(tp)_1&(tp)_1&(tp;p)_2\\
0&0&0&0&0&0&0&(tp^2)_1\\
0&0&0&0&(p)_1&0&0&(tp^2)_1\\
0&0&0&0&(p)_1&(p)_1&0&(tp^2)_1\\
0&0&0&0&0&0&0&0
\end{pmatrix}.
\ee

The annihilation part $\rho_a$, in turn, splits into `decay', `fusion'  and `exchange' parts
\be
\rho_a=\rho_d+\rho_f+\rho_e.
\ee
For the decay part $\rho_d$, we have the following four nontrivial cases:
\be
\rho_d f_{0,1+i+j}=t(t)_1(f_{i,j}-pf_{j,i}),\quad 1\le i<j\le 3,
\ee
and
\be
\rho_d f_{0,7}=t(t;p)_2(f_{1,6}-pf_{2,5}+p^2f_{3,4})+t^2(t)_1(f_{4,3}-pf_{5,2}+p^2f_{6,1}).
\ee
The fusion part $\rho_f$, which is, in a sense, dual to the decay part, has the following  nontrivial cases:
\be
\rho_f f_{i,j}=-\rho_f f_{j,i}=(tp)_1f_{0,1+i+j},\quad 1\le i<j\le 3,
\ee
and
\be
\rho_f f_{i,7-i}=
\begin{cases}
 (-1)^{i-1}(tp;p)_2 f_{0,7}& \text{if } 1\le i\le 3;\\
(-1)^i (tp^2)_1f_{0,7}& \text{if } 4\le i\le 6.
\end{cases}
\ee
The exchange part $\rho_{e}$ has the following nontrivial cases:
\be
\rho_{e} f_{1,6}=tp(tp)_1(pf_{3,4}-f_{2,5}),\quad \rho_{e} f_{1,7}=t^2p(tp)_1(f_{4,5}-pf_{5,4}),
\ee
\be
\rho_{e} f_{2,5}=t^2p(p)_1f_{6,1}-t(tp)_1(f_{1,6}+p^2f_{3,4}),\quad \rho_{e} f_{2,7}=t^2(tp)_1(f_{4,6}-p^2f_{6,4}),
\ee
\be
\rho_{e} f_{3,4}=t(tp)_1 (f_{1,6}-pf_{2,5})+t^2(p)_1(f_{5,2}-pf_{6,1}),\quad \rho_{e} f_{3,7}=t^2(tp)_1 (f_{5,6}-pf_{6,5}),
\ee
\be
\rho_{e} f_{4,5}=t(tp^2)_1f_{1,7},\quad \rho_{e} f_{4,6}=tp(tp^2)_1f_{2,7},
\ee
\be
\rho_{e} f_{5,2}=tp(p)_1f_{3,4},\quad \rho_{e} f_{5,4}=t(tp^2)_1f_{1,7},\quad \rho_{e} f_{5,6}=tp(tp^2)_1f_{3,7},
\ee
\be
\rho_{e} f_{6,1}=t(p)_1(f_{2,5}-pf_{3,4}),\quad \rho_{e} f_{6,4}=-t(tp^2)_1f_{2,7},\quad \rho_{e} f_{6,5}=-tp(tp^2)_1f_{3,7}.
\ee

The associated endomorphism-valued knot invariant is expected to take the form
\be
J_\rho=\operatorname{LG}^{(3)}\!(p,t)\operatorname{id}_{\Lambda_p(V)},
\ee
where $\operatorname{LG}^{(3)}\!(p,t)$ is the Links--Gould polynomial~\cite{MR1223526} associated with an 8-dimensional representation of the super quantum group $U_q(\mathfrak{gl}(3|1))$.

\section{Conclusions and Discussion}

In this work, we have shown that the exterior algebra \( \bigwedge\!\!V \) of a \( \lcf \)-vector space \( V \) of dimension \( N \ge 2 \) admits a one-parameter family of braided Hopf algebra structures \( \Lambda_p(V) \), with \( p \in \lcf^\times \), arising from its realisation as a Nichols algebra associated with a specific Hecke-type \( R \)-matrix (braiding) \( \tau \) on \( V \). The case \( p = 1 \) corresponds to the standard Nichols algebra structure on \( \bigwedge\!\!V \) associated with the ``super'' braiding \eqref{superbr}. 
We provide explicit structure constants for \( \Lambda_p(V) \) with respect to a natural set-theoretic basis. While the algebraic structure of \( \Lambda_p(V) \) remains undeformed, the braiding, coproduct, and antipode vary with \( p \).

Using MOY diagrammatic calculus, we derive a simplified formula for the induced braiding \( \hat{\tau} \) on \( \Lambda_p(V) \), and provide an explicit expression for its matrix coefficients with respect to the set-theoretic basis. Furthermore, we introduce a family of diagonal automorphisms and use them to construct explicit constant solutions to the quantum Yang--Baxter equation over \( \Lambda_p(V) \). Through computations in low dimensions, we provide evidence supporting the conjecture that these solutions yield two-variable Links--Gould polynomial invariants associated with the \( 2^N \)-dimensional representations of the quantum supergroup \( U_q(\mathfrak{gl}(N|1)) \).

Future directions include:
\begin{itemize}
    \item extend our computation of the \( R \)-matrix to derive the associated knot invariants in specific examples, without fixing the dimension \( N \), and study their behaviour as functions of \( N \);
    \item motivated by the \( V_n \) invariants of~\cite{GaroufalidisKashaev2023}, exploring deeper ties with the representation theory of \( U_q(\mathfrak{gl}(N|1)) \) and constructing coloured versions of the Links--Gould invariants.
\end{itemize}

These developments promise to deepen the algebraic and topological applications of Nichols algebras.

\bibliographystyle{plain}

\def\cprime{$'$} \def\cprime{$'$}

\end{document}